\renewcommand{\epsilon}{{\varepsilon}}
\numberwithin{equation}{section}
\newtheorem{theorem}{Theorem}[section]
\newtheorem{lemma}[theorem]{Lemma}
\newtheorem{remark}[theorem]{Remark}
\newtheorem{proposition}[theorem]{Proposition}
\newtheorem{corollary}[theorem]{Corollary}
\newcommand{\C}{\mathbb C}
\newcommand{\R}{\mathbb R}
\newcommand{\N}{\mathbb N}
\def\({\left(}
\def\){\right)}
\def\<{\left\langle}
\def\>{\right\rangle}
\def\B{\mathcal B}
\def\K{\mathcal K}
\def\L{\mathcal L}
\def\EE{\mathcal E}
\def\M{\mathcal M}
\DeclareMathOperator{\RE}{Re}
\DeclareMathOperator{\IM}{Im}
\begin{document}

\title[Scattering of the energy-critical NLS with dipolar interaction]{Scattering of the energy-critical NLS with dipolar interaction}

\author[Alex H. Ardila]{Alex H. Ardila}
\address{Universidade Federal de Minas Gerais\\ ICEx-UFMG\\ CEP
  30123-970\\ MG, Brazil} 
\email{ardila@impa.br}


\begin{abstract}
In this paper, we investigate the global well-posedness and $H^{1}$ scattering theory for a 3d energy-critical Schr\"odinger equation 
under the influence of magnetic dipole interaction $\lambda_{1}|u|^{2}u+\lambda_{2}(K\ast|u|^{2})u$, where $K$ is the  dipole-dipole interaction kernel. Our proof of  global well-posedness result is based on the argument of Zhang \cite{Zhang2006}. Moreover, adopting  the induction of energy technique of Killip-Oh-Pocovnicu-Visan \cite{KillipOhPoVi2017}, we obtain a condition for scattering.
\end{abstract}

\subjclass[2010]{35Q55, 37K45, 35P25.}
\keywords{Energy-critical NLS; dipolar quantum gases; soliton; scattering.}

\maketitle


\medskip

\section{Introduction}
\label{sec:intro}
The nonlinear Schr\"odinger equations with dipolar interactions have been intensively studied in the last decade. Most works concern local existence \cite{CarlesMarkoSpaber2008}, stability and instability of standing waves \cite{AntonelliSparber2011, LuaAt2019, BellazziniJeanjean2016}, blow-up in finite time and small data scattering \cite{LuoSty2020, CarlesMarkoSpaber2008, BellazziniJeanjean2016, BellazziniForce2020}. Recently, results on scattering for  ``large'' data were obtained \cite{BellazziniForcella2019}.
In this paper, we consider the Cauchy problem for the following energy-critical NLS under the influence of magnetic dipole interaction arising in Bose-Einstein condensation of dipolar quantum gases
\begin{equation}\label{NLS}
\begin{cases} 
(i\partial_{t}+\Delta) u=\lambda_{1}|u|^{2}u+\lambda_{2}(K\ast|u|^{2})u+|u|^{4}u,\\
u(0)=u_{0}\in H^{1}(\R^{3}),
\end{cases} 
\end{equation}
where  $u:\mathbb{R}\times\mathbb{R}^{3}\rightarrow \mathbb{C}$. The physical parameters $\lambda_{1}$,  $\lambda_{2}\in \R$ describe the strength of the dipolar interaction. The dipole interaction kernel is given by
\begin{equation}\label{Dipolo}
K(x)=\frac{1-3\,\text{cos}^{2}(\theta)}{|x|^{3}},
\end{equation}
where $\theta=\theta(x)$ is the angle between the dipole axis $n$ and the vector $x$, i.e. $\text{cos}(\theta)=x\cdot n/|x|$.
For simplicity, without restriction of generality, we fix the dipole axis as the vector $n=(0,0,1)$.  The equations of the form \eqref{NLS} arise in a wide variety of physical models and have been investigated by many authors
\cite{AntonelliSparber2011, BellazziniJeanjean2016, BellazziniForcella2019, CarlesMarkoSpaber2008, Triay2018, LuoSty2020}.
Experimental investigations of the collapse dynamics of \eqref{NLS} in the unstable regime can be found in \cite{Blakie2016,METZ2009}; see
introduction in \cite{LuoSty2020} for more details.

The term ``energy-critical'' refers to the fact that if we ignore the long-range dipolar interaction  
$\lambda_{1}|u|^{2}u+\lambda_{2}(K\ast|u|^{2})u$  (i.e., $\lambda_{1}=\lambda_{2}=0$) in \eqref{NLS}, 
then the family of transformations $u(t,x)=\lambda^{\frac{1}{2}}u(\lambda^{2}t, \lambda^{-1}x)$
preserves both the equation and the energy.  Global well-posedness and scattering theory for the energy critical and mass critical NLS has been intensively studied in the last years; see Colliander-Keel-Staffilani-Takaoka-Tao \cite{TaoKell2008}, Tao-Visan-Zhang \cite{TaoVisanZhang2007}, J. Bourgain \cite{Bourgain1999}, B. Dodson \cite{Dodson2012},  Kenig-Merle\cite{KenigMerle2006}, Killip-Visan\cite{KiiVisan2008}  and references therein for more details.

In this paper, we focus on the case when $\lambda_{1}$ and $\lambda_{2}\in \R$  fulfill the following conditions (the so-called Unstable Regime \cite{CarlesMarkoSpaber2008}):
\begin{equation}\label{UR}
\begin{cases} 
\lambda_{1}-\frac{4\pi}{3}\lambda_{2}<0\quad if\quad \lambda_{2}>0,\\
\lambda_{1}+\frac{8\pi}{3}\lambda_{2}<0\quad if\quad  \lambda_{2}<0.\\
\end{cases} 
\end{equation}
Equation \eqref{NLS} admits the conservation of the energy, defined by
\[\begin{split}
E(u):=\frac{1}{2}\int_{\R^{3}}|\nabla u|^{2}+\frac{\lambda_{1}}{4}|u|^{4}
+\frac{\lambda_{2}}{4}(K\ast|u|^{2})|u|^{2}dx+\frac{1}{6}|u|^{6}dx.
\end{split}\]
for sufficient smooth solutions. We have the 
following result concerning with the well-posedness of the problem \eqref{NLS}.

\begin{theorem}[Global well-posedness]\label{Th1}
Assume that $\lambda_{1}$ and  $\lambda_{2}$ do not vanish simultaneously. Let $u_{0}\in H^{1}(\R^{3})$. Then the initial value  
problem \eqref{NLS} admits  a  unique  global  solution $u\in C(\R, H^{1}(\R^{3}))$ such that $u(0)=u_{0}$. Moreover, we have the conservation of energy, 
mass and momentum: for all $t\in \R$, 
\[
\begin{split}
 M(u(t)):=\| u(t)\|^{2}_{L^{2}}&=M(u_{0}), \,\,\, \mbox{and}\,\,\,  E(u(t))=E(u_{0}),  \\
P(u(t))&:=\int_{\R^{3}}2\IM(\bar{u}\nabla u)dx=P(u_{0}).  
\end{split}
\]
\end{theorem}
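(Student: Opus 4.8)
The plan is to follow the scheme of Zhang \cite{Zhang2006}: combine the standard $H^{1}$ local theory for the 3d energy-critical NLS with two global inputs, namely an a priori bound on $\|u(t)\|_{H^{1}}$ produced by the conservation laws, and the global space-time bound of Colliander--Keel--Staffilani--Takaoka--Tao \cite{TaoKell2008} for the defocusing quintic NLS, which is transferred to \eqref{NLS} by a perturbation (stability) argument. The only structural fact needed about the dipolar term is that $\widehat{K}(\xi)=\frac{4\pi}{3}\bigl(3\xi_{3}^{2}|\xi|^{-2}-1\bigr)$ is a bounded function (with $-\frac{4\pi}{3}\le\widehat K\le\frac{8\pi}{3}$, consistent with \eqref{UR}); hence convolution with $K$ is a Fourier multiplier, bounded on $L^{p}(\R^{3})$ for $1<p<\infty$ and commuting with $\nabla$, so $(K\ast|u|^{2})u$ satisfies exactly the same Strichartz-type nonlinear estimates as the local cubic term $|u|^{2}u$ and its non-locality creates no essential difficulty.

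First I would establish the local theory. Using Strichartz estimates and a contraction mapping in a space of the form $X(I)=C(I;H^{1})\cap\dot S^{1}(I)$, where $\dot S^{1}(I)$ is the energy-critical Strichartz norm on $I\times\R^{3}$ (so that $\dot S^{1}(I)\hookrightarrow L^{10}_{t,x}(I\times\R^{3})$), one gets for every $u_{0}\in H^{1}(\R^{3})$ a unique maximal-lifespan solution $u\in C((-T_{-},T_{+});H^{1})$, persistence of regularity along the flow, and the blow-up alternative: if $T_{+}<\infty$ then $\|u\|_{L^{10}_{t,x}([0,T_{+})\times\R^{3})}=\infty$. The quintic term is controlled by the short-time energy-critical estimates (Cazenave--Weissler), while the cubic and dipolar terms, being energy-subcritical, are absorbed with a gain of a positive power of $|I|$ from H\"older's inequality in time. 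Persistence of regularity then gives the conservation of mass, energy and momentum for the $H^{1}$ solution by the usual approximation argument (prove the identities for $H^{2}$ data by differentiating the functionals, then pass to the limit); for the dipolar contributions one uses the evenness $K(-x)=K(x)$, equivalently $\int(K\ast f)g\,dx=\int f\,(K\ast g)\,dx$ and the translation invariance of $\int(K\ast|u|^{2})|u|^{2}\,dx$, so that the differentiation of $E(u(t))$ and $P(u(t))$ closes.

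Next I would prove the a priori $H^{1}$ bound. By Plancherel and $\widehat K\in L^{\infty}$, $\bigl|\int(K\ast|u|^{2})|u|^{2}\,dx\bigr|\le\|\widehat K\|_{L^{\infty}}\|u\|_{L^{4}}^{4}$, so conservation of energy gives
\[
\frac12\|\nabla u(t)\|_{L^{2}}^{2}+\frac16\|u(t)\|_{L^{6}}^{6}\le|E(u_{0})|+C\|u(t)\|_{L^{4}}^{4}.
\]
The interpolation $\|u\|_{L^{4}}\le\|u\|_{L^{2}}^{1/4}\|u\|_{L^{6}}^{3/4}$ and Young's inequality give $C\|u\|_{L^{4}}^{4}\le\frac1{12}\|u\|_{L^{6}}^{6}+C'\|u\|_{L^{2}}^{2}$; absorbing the $L^{6}$ term and invoking conservation of mass yields $\sup_{t}\|u(t)\|_{H^{1}}\le A$ for some $A=A(M(u_{0}),E(u_{0}))<\infty$. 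It is precisely the defocusing quintic term that makes this work, dominating and hence controlling the possibly focusing cubic and dipolar energy contributions for all $\lambda_{1},\lambda_{2}\in\R$; note that the regime \eqref{UR} plays no role here.

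Finally I would upgrade this uniform $H^{1}$ bound to a global space-time bound. Given $T<\infty$, fix $t_{0}\in[0,T)$ and let $w$ solve the defocusing quintic NLS $(i\partial_{t}+\Delta)w=|w|^{4}w$ with $w(t_{0})=u(t_{0})$; since $\|u(t_{0})\|_{H^{1}}\le A$, \cite{TaoKell2008} gives $\|w\|_{L^{10}_{t,x}(\R\times\R^{3})}+\|w\|_{\dot S^{1}(\R)}\le C(A)$, with $C(A)$ \emph{independent of $t_{0}$}. Viewing $u$ as an approximate solution of the $w$-equation with error $e=\lambda_{1}|u|^{2}u+\lambda_{2}(K\ast|u|^{2})u$, the energy-subcriticality of the cubic and dipolar terms, the $L^{p}$-boundedness of convolution by $K$, and H\"older in time yield $\|e\|_{\dot N^{1}(I)}\le|I|^{\theta}C(A)\|u\|_{\dot S^{1}(I)}^{3}$ for some $\theta>0$. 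A continuity argument on $[t_{0},t_{0}+\tau]$ then keeps $\|u\|_{\dot S^{1}}$ bounded: as long as $\|u\|_{\dot S^{1}([t_{0},b])}\le 2C(A)$ we get $\|e\|_{\dot N^{1}([t_{0},b])}\le\tau^{\theta}C(A)(2C(A))^{3}$, which lies below the threshold of the long-time perturbation lemma for the energy-critical NLS once $\tau=\tau(A)$ is small enough, and that lemma then forces $\|u\|_{\dot S^{1}([t_{0},b])}<2C(A)$; hence the set of such $b$ is nonempty, open and closed in $[t_{0},t_{0}+\tau]$, i.e. all of it. Chaining $\lceil T/\tau\rceil$ windows of this uniform length gives $\|u\|_{L^{10}_{t,x}([0,T]\times\R^{3})}<\infty$ for every $T$, and likewise for negative times, so $T_{\pm}=\infty$ by the blow-up alternative. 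I expect this last step to be the main obstacle: since the energy-critical local existence time depends on the profile of the datum and not merely on its $H^{1}$-norm, one cannot iterate the local theory directly --- the a priori bound must be fed into \cite{TaoKell2008} to produce a \emph{time-uniform} space-time bound for the model equation, and the perturbation must then be closed by the above bootstrap, whose engine is the subcritical time-gain $|I|^{\theta}$ of the cubic and dipolar terms rather than any fixed-point argument on short intervals.
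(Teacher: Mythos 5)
Your proposal is correct and follows essentially the same route as the paper, which itself implements Zhang's scheme: an a priori $H^{1}$ bound obtained by using $\widehat{K}\in L^{\infty}$ to dominate the cubic and dipolar energy contributions by $\|u\|_{L^{4}}^{4}$ and absorbing this into the defocusing quintic term, followed by a comparison with the globally bounded defocusing quintic NLS in which the subcritical terms are closed via the time-gain $|I|^{\theta}$ from H\"older in time. The only (cosmetic) difference is that you phrase the comparison as a long-time perturbation argument with $u$ as an approximate solution of the quintic equation on windows of uniform length, whereas the paper writes the equation for the difference $w=u-v$ and iterates using time-translation invariance; the two formulations are equivalent.
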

In this paper, we consider the scattering theory of the solutions
to \eqref{NLS}. We recall that a solution $u$ to \eqref{NLS} scatters in $H^{1}(\R^{3})$ (both forward and backward time) if there exists a unique scattering state $u_{\pm}\in H^{1}(\R^{3})$ such that
\[\lim_{t\rightarrow\pm\infty}\|u(t)-e^{it\Delta}u_{\pm} \|_{H^{1}}=0,\]
where $e^{it\Delta}$ is the Schr\"odinger group. 

\begin{proposition}[Small data scattering]\label{ThDs}
Let $u_{0}\in H^{1}$ and $u$ denote the global solution to \eqref{NLS} with $u(0)=u_{0}$ given in Theorem \ref{Th1}. Then, there exists $\delta>0$, $\delta$ depending on the $\dot{H}^{1}(\R^{3})$ norm of the initial data $u_{0}$, such that if $\|u_{0}\|_{L^{2}}<\delta$, then $u$ scatters in $H^{1}(\R^{3})$. 
\end{proposition}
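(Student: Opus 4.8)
The plan is to treat \eqref{NLS} as a perturbation of the defocusing energy-critical equation $(i\partial_t+\Delta)v=|v|^4v$; the point is that the smallness of the mass, combined with a time-uniform $\dot H^1$ bound obtained from the conservation laws of Theorem \ref{Th1}, turns the lower-order dipolar nonlinearity $\lambda_1|u|^2u+\lambda_2(K\ast|u|^2)u$ into a small perturbation in the relevant sub-energy-critical Strichartz spaces.

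First I would extract a time-uniform $\dot H^1$ bound from the conservation laws. Since $\widehat K$ is bounded (it is homogeneous of degree $0$ and smooth away from the origin, so $K\ast\,\cdot\,$ is a Calder\'on--Zygmund multiplier, bounded on $L^r$ for $1<r<\infty$), Plancherel gives $\bigl|\int_{\R^3}(K\ast|u|^2)|u|^2\,dx\bigr|\le C\|u\|_{L^4}^4$, while Gagliardo--Nirenberg yields $\|u\|_{L^4}^4\le C\|u\|_{L^2}\|u\|_{L^6}^3$. Inserting this into the identity $E(u(t))=E(u_0)$ and using $\|u(t)\|_{L^2}=\|u_0\|_{L^2}<\delta$ together with Young's inequality to absorb the indefinite contributions into $\tfrac{1}{6}\|u\|_{L^6}^6$, one obtains, for $\delta$ below an absolute constant, $\sup_{t\in\R}\|u(t)\|_{\dot H^1}\le \mathcal A(\|u_0\|_{\dot H^1})$. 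Interpolating with mass conservation then gives $\sup_{t\in\R}\|u(t)\|_{\dot H^{1/2}}\le \delta^{1/2}\mathcal A(\|u_0\|_{\dot H^1})^{1/2}$, which becomes arbitrarily small once $\delta\le\delta(\|u_0\|_{\dot H^1})$ --- and $\dot H^{1/2}$ is precisely the scaling-critical space for both $|u|^2u$ and $(K\ast|u|^2)u$.

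Next, let $v$ solve $(i\partial_t+\Delta)v=|v|^4v$ with $v(0)=u_0$. Since the quintic flow conserves mass and energy, $v\in C(\R;H^1)$, and by Colliander--Keel--Staffilani--Takaoka--Tao \cite{TaoKell2008} it is global and scatters with $\|v\|_{L^{10}_{t,x}(\R\times\R^3)}\le \mathcal C(\|u_0\|_{\dot H^1})$; persistence of regularity at the $\dot H^{1/2}$ level then gives $\|v\|_{L^5_{t,x}(\R\times\R^3)}\le \mathcal C'(\|u_0\|_{\dot H^1})\,\|u_0\|_{\dot H^{1/2}}$, which is small. I would then run a continuity argument on $[-T,T]$ with constants uniform in $T$: partitioning $\R$ into finitely many intervals on which $\|v\|_{L^{10}_{t,x}}$ is below the perturbation threshold, one uses the standard Strichartz and nonlinear estimates at the $\dot H^{1/2}$ and $\dot H^1$ levels (the nonlocal term being estimated exactly like $|u|^2u$, thanks to the $L^r$-boundedness of $K\ast\,\cdot\,$) to propagate the smallness of $\sup_t\|u(t)\|_{\dot H^{1/2}}$ and of $\|v\|_{L^5_{t,x}}$ into the space-time bound $\|u\|_{L^5_{t,x}([-T,T]\times\R^3)}\le C(\|u_0\|_{\dot H^1})\,\delta^{1/2}$; this in turn makes $\lambda_1|u|^2u+\lambda_2(K\ast|u|^2)u$ small in the dual Strichartz space attached to the energy-critical problem, so the long-time perturbation lemma for the energy-critical NLS gives $\|u\|_{L^{10}_{t,x}([-T,T]\times\R^3)}\le C(\|u_0\|_{\dot H^1})$, closing the bootstrap. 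Sending $T\to\infty$ yields $\|u\|_{L^{10}_{t,x}(\R\times\R^3)}+\|u\|_{L^5_{t,x}(\R\times\R^3)}<\infty$, after which a standard Strichartz/Duhamel argument shows that $e^{-it\Delta}u(t)$ is Cauchy in $H^1$ as $t\to\pm\infty$, producing the scattering states $u_\pm$.

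I expect the bootstrap to be the main obstacle. One must couple the $\dot H^{1/2}$-level analysis --- which controls the cubic and nonlocal terms and is where all the smallness originates --- with the $\dot H^1$-level analysis governed by the defocusing quintic, where there is no smallness and one must import the global space-time bound of \cite{TaoKell2008} together with its stability theory, and carry this out with constants independent of the time interval; an additional subtlety is that the $\dot H^{1/2}$ smallness furnished by mass conservation is a priori only pointwise in time and must first be converted into smallness of a space-time norm before the perturbation lemma can be applied.
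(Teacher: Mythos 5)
Your proposal is correct and follows essentially the same route as the paper: the paper's proof (Proposition \ref{Scd}, following Zhang \cite{Zhang2006}) likewise writes $u=v+w$ with $v$ the global defocusing quintic solution of \eqref{CNLS} sharing the same data, imports the global space-time bound of \cite{TaoKell2008}, and treats the cubic and dipolar terms as perturbations whose smallness in the dual Strichartz norms comes from the small mass (exactly the $\dot H^{1/2}$-critical scaling you identify), closing via Zhang's inductive/bootstrap argument on the $Z^1$ spaces and concluding scattering by Duhamel. The only cosmetic difference is your explicit use of the $L^5_{t,x}$ norm versus the paper's $L^{10/3}_{t,x}\cap L^{10}_tL^{30/13}_x$ bookkeeping.
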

In view of Proposition \ref{ThDs}, it is natural to ask under which conditions on the initial date scattering holds. Removing the critical term 
$|u|^{4}u$ one recovers the non-local NLS arising in Bose-Einstein condensation of dipolar quantum gases
\begin{equation}\label{DQG}
(i\partial_{t}+\Delta) u=\lambda_{1}|u|^{2}u+\lambda_{2}(K\ast|u|^{2})u.
\end{equation}
Bellazzini and Forcella \cite{BellazziniForcella2019} have obtained the following 
sufficient condition for scattering of \eqref{DQG}: assume that $\lambda_{1}$, $\lambda_{2}$ belong 
to the unstable regime \eqref{UR} and define the set
\[\begin{split}
\M=\left\{ u\in H^{1}(\R^{3}): E(u_{0})M(u_{0})<E(Q)M(Q)\quad \mbox{and}\right. \\
\left.\|\nabla u_{0}\|_{L^{2}}\|u_{0}\|_{L^{2}}<\|\nabla Q\|_{L^{2}}\|Q\|_{L^{2}}\right\},
\end{split}\]
where $Q$ is a ground state associated with equation \eqref{DQG}. The set $\M$ is invariant 
by the flow of \eqref{DQG} and if $u_{0}\in \M$, then the corresponding solution  to \eqref{NLS} with 
initial data $u_{0}$ is global and scatters. For other results in this direction see also \cite{BellazziniJeanjean2016, LuoSty2020, DinhForceHaja2020, BellazziniForce2020}.

By Proposition \ref{ThDs} we have that solutions of \eqref{NLS} with small initial data scatter in $H^{1}(\R^{3})$. Notice that the equation \eqref{NLS} admits a nonscattering solutions of the form $u(t,x)=e^{i\omega t}Q_{\omega}(x)$, where $Q_{\omega}$ is a nontrivial
solution of the elliptic problem (see Corollary \ref{Thecollo} below)
\begin{equation}\label{Ellip}
-\Delta Q_{\omega}+\lambda_{1}|Q_{\omega}|^{2}Q_{\omega}
+\lambda_{1}(K\ast|Q_{\omega}|^{2})Q_{\omega}+|Q_{\omega}|^{4}Q_{\omega}+\omega Q_{\omega}=0.
\end{equation}
The goal this paper is to find a region $\K$ (see \eqref{DefK} below) of the mass/energy plane such that if the initial data $u_{0}\in H^{1}(\R^{3})$ satisfies  $(M(u_{0}), E(u_{0}))\in \K$, then the corresponding solution with initial data $u_{0}$ scatter in $H^{1}(\R^{3})$.
With this in mind, for $\alpha>0$,  we consider the following sharp $\alpha$-Gagliardo-Nirenberg-H\"older inequality 
\begin{equation}\label{SGNH}
-\lambda_{1}\|f\|^{4}_{L^{4}}-\lambda_{2}\|(K\ast|f|^{2})|f|^{2}\|_{L^{1}}\leq 
C_{\alpha}\|f\|_{L^{2}}\|\nabla f\|^{\frac{3}{1+\alpha}}_{L^{2}}\|f\|^{\frac{3\alpha}{1+\alpha}}_{L^{6}},
\end{equation}
where the sharp constant $C_{\alpha}>0$ is explicitly given by (see Theorem \ref{TheGN} and Corollary \ref{GNcoro})
\begin{equation}\label{apropiate}
C_{\alpha}=\frac{4(1+\alpha)}{3\alpha^{\frac{\alpha}{2(1+\alpha)}}}
{\|Q_{\alpha}\|^{-1}_{L^{2}}\|\nabla Q_{\alpha}\|^{\frac{\alpha-1}{1+\alpha}}_{L^{2}}}.
\end{equation}
We also define the following minimization problem 
\begin{equation}\label{Mef}
\EE(m):=\inf\left\{E(u): u\in H^{1}(\R^{3}),\, M(u)=m \,\, \mbox{and}\,\, I(u)=0 \right\}, 
\end{equation}
where $m\geq0$ and $I$ (virial functional) is given for $v\in H^{1}(\R^{3})$ by
\begin{equation}\label{Virial}
I(v):=\|\nabla v\|^{2}_{L^{2}}+\|v\|^{6}_{L^{6}}+\frac{3}{4}\lambda_{1}\|v\|^{4}_{L^{4}}
+\frac{3}{4}\lambda_{2}\|(K\ast|v|^{2})|v|^{2}\|_{L^{1}}.
\end{equation}
If no function obeys the constrain $\left\{u\in H^{1}(\R^{3}),\, M(u)=m \,\, \mbox{and}\,\, I(u)=0\right\}$, 
then we assume that $\EE(m)=\infty$. Finally, following \cite{KillipOhPoVi2017}, we define the following open region $\K$ of $ \R^{2}$,
\begin{equation}\label{DefK}
\K:= \left\{(m,e): 0<m<M(Q_{1}) \,\, \mbox{and} \,\, 0<e<\EE(m)\right\},
\end{equation}
where is $Q_{1}$ is an optimizer (see \eqref{apropiate}) of the $(\alpha=1)$-Gagliardo-Nirenberg-H\"older inequality \eqref{SGNH}.
\begin{remark}[Description of the region $\K$]\label{Re1}
We set $S(x):=\frac{1}{\sqrt{2}}Q_{1}(\frac{\sqrt{3}}{2}x)$. Notice that $M(S)<M(Q_{1})$, $I(S)=0$ and $E(S)>0$ (see Section \ref{SC1} for more details). In Theorem \ref{SK} we will show that
the function $\EE: (0, M(Q_{1})]\rightarrow \R$ given by  \eqref{Mef}  satisfies: $\EE(m)=\infty$ on $(0, M(S))$, $\EE(m)\in (0, E(S)]$ on $[M(S), M(Q_{1}))$ and  $\EE(m)=0$ when $m=M(Q_{1})$. 
Moreover, $\EE$ is strictly decreasing  on $[M(S), M(Q_{1})]$; see Figure \ref{Figu1} 
for an illustration of the shape of the function $\EE$.
\end{remark}
\begin{figure}[h]\label{Figu1}
\caption{Mass/Energy Plane}
\centering
\includegraphics[width=\textwidth]{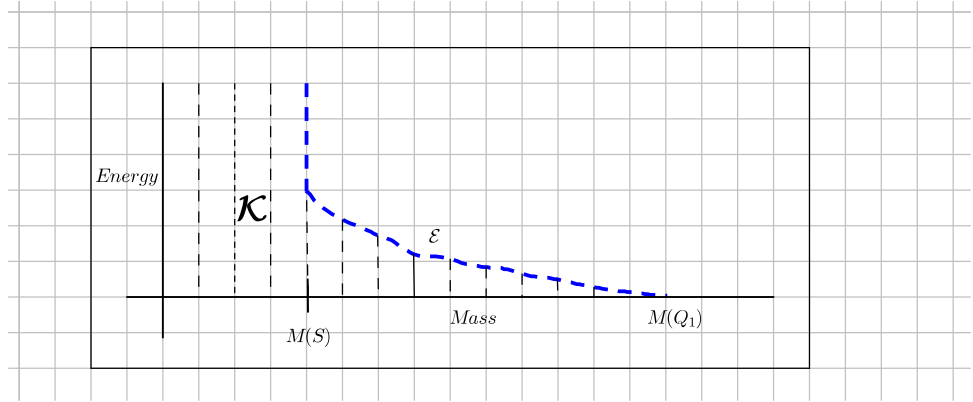}
\end{figure}
Since $\K$ contains a neighborhood of $(0,0)$, and solutions of equation \eqref{NLS} with 
small initial values scatter (see Proposition \ref{ThDs}), a question that naturally arises is whether or not all  solutions of \eqref{NLS} such that
$(M(u),E(u))\in \K$ also scatter. In this paper, we give a positive answer to this question. 
\begin{theorem}\label{TheS}
Let $\lambda_{1}$, $\lambda_{2}$ belong to the unstable regime \eqref{UR} and let $\K$ be defined by \eqref{DefK}.
Given any $u_{0}\in H^{1}(\R^{3})$ such that $(M(u_{0}),E(u_{0}))\in \K$, it follows that the corresponding global solution $u$
of \eqref{NLS} scatters in $ H^{1}(\R^{3})$. 
\end{theorem}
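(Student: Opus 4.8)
The plan is to establish scattering by the concentration--compactness/rigidity scheme, following \cite{KillipOhPoVi2017} and importing the variational information about $\K$ from Theorem~\ref{SK}. The first step is to read off the dynamical meaning of the hypothesis $(M(u_0),E(u_0))\in\K$: combining the conservation laws of Theorem~\ref{Th1} with the definition \eqref{Mef} of $\EE$, the sharp inequality \eqref{SGNH} with $\alpha=1$, and a continuity argument, the (global, by Theorem~\ref{Th1}) solution $u$ stays \emph{sub-threshold} for all time, in the sense that
\[
\sup_{t\in\R}\|u(t)\|_{H^1}<\infty \qquad\text{and}\qquad \inf_{t\in\R} I(u(t))\ge c_0>0,
\]
where $c_0$ depends only on the distance of $(M(u_0),E(u_0))$ to $\partial\K$; indeed, if $I(u(t_n))\to 0$ along a sequence then $(u(t_n))$ would be admissible in \eqref{Mef} with $M(u(t_n))=M(u_0)$ and $\limsup E(u(t_n))\le E(u_0)<\EE(M(u_0))$, a contradiction. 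By the standard local well-posedness and stability theory for \eqref{NLS} — in which the dipolar term is handled by the Hardy--Littlewood--Sobolev inequality, exactly as for Proposition~\ref{ThDs} — scattering of a sub-threshold solution is equivalent to the finiteness of a global spacetime norm $\|u\|_{S(\R)}$ (built from the energy-critical $L^{10}_{t,x}(\R\times\R^3)$ norm together with the $L^2$-admissible norms controlling the subcritical contributions). Thus it suffices to show $\|u\|_{S(\R)}<\infty$ whenever $(M(u_0),E(u_0))\in\K$.

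Suppose this fails. Then, running the induction-on-energy argument of \cite{KillipOhPoVi2017}, the set of sub-threshold data whose solution has infinite scattering norm is non-empty, and minimizing $E(u_0)$ over this set — together with the linear profile decomposition for $e^{it\Delta}$ in $H^1(\R^3)$ and the stability theory for \eqref{NLS} — produces a \emph{critical solution} $u_c$: global, with $(M(u_c),E(u_c))\in\K$, with $\|u_c\|_{S(\R_+)}=\|u_c\|_{S(\R_-)}=\infty$, and whose orbit is precompact in $H^1(\R^3)$ modulo the symmetries of \eqref{NLS}, i.e.\ there exist $x(t)\in\R^3$, and (before a Galilean normalization justified by conservation of momentum) a frequency $\xi(t)\in\R^3$, such that $\{u_c(t,\cdot-x(t))\}_{t\in\R}$ is relatively compact in $H^1(\R^3)$. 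Several points need care. First, the $H^1$-profile decomposition must accommodate scaling parameters $\lambda_n\to 0$, $\lambda_n\to\infty$ and $\lambda_n$ bounded; the nonlinear profiles in the first two regimes are, after rescaling, approximated by solutions of the defocusing energy-critical equation $i\partial_t v+\Delta v=|v|^4 v$ (the subcritical terms, having strictly lower scaling, drop out in the limit), which scatter with bounds depending only on $\|v\|_{\dot H^1}$ by \cite{TaoKell2008}, hence do not obstruct scattering. Second, the non-local nonlinearity $\lambda_2(K\ast|u|^2)u$ decouples along the profiles, the multilinear and Strichartz estimates being carried out in $L^2$-based spaces via Hardy--Littlewood--Sobolev. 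Third, at most one profile survives, since otherwise one gets a non-trivial splitting of mass and energy, hence a profile of strictly smaller energy, which scatters by minimality, and then the stability theory forces $\|u_c\|_{S}<\infty$; one also checks that the limiting procedure does not push $(M(u_c),E(u_c))$ onto $\partial\K$, so the coercivity of the previous paragraph persists for $u_c$.

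It remains to preclude $u_c$, which is done by a localized virial (Morawetz-type) argument exploiting $\inf_t I(u_c(t))\ge c_0>0$. The functional $I$ in \eqref{Virial} is precisely the one generated by the $L^2$-preserving dilation acting on \eqref{NLS} — note that $K$ is homogeneous of degree $-3$, so the dipolar term scales like $\|u\|_{L^4}^4$ — so that along $u_c$ one has the virial identity $\tfrac{d^2}{dt^2}\int_{\R^3}|x|^2|u_c(t,x)|^2\,dx = 8\,I(u_c(t))$. Taking a smooth cutoff $\phi$ equal to $1$ on the unit ball and setting $V_R(t):=\int_{\R^3}\phi(x/R)\,|x|^2\,|u_c(t,x)|^2\,dx$, the uniform $H^1$ bound yields $|V_R'(t)|\lesssim R$ for all $t$, while the almost-periodicity (which makes the mass and energy of $u_c(t)$ tight around $x(t)$) gives, for all $t$,
\[
V_R''(t)\ \ge\ 8\,I(u_c(t))-o_R(1)\ \ge\ 8c_0-o_R(1).
\]
Choosing $R$ large enough that the right-hand side exceeds $4c_0$ and integrating over $[0,T]$ gives $V_R'(T)\ge 4c_0T-O(R)\to\infty$, contradicting $|V_R'(T)|\lesssim R$. (If $x(t)$ escapes to infinity, one argues in the usual way: after the Galilean normalization $|x(t)|=o(t)$ by momentum conservation and almost-periodicity, and a center-following truncation — or the interaction Morawetz inequality — replaces the fixed weight.) This contradiction rules out $u_c$ and proves Theorem~\ref{TheS}.

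I expect the main obstacle to lie in the concentration--compactness step, and within it in the construction and decoupling of the nonlinear profiles: equation \eqref{NLS} is not scale-invariant and carries three nonlinearities at two scaling levels (the energy-critical $|u|^4u$ on the one hand, the $L^2$-supercritical local and non-local cubic terms on the other), so one must pin down the correct limiting equation in each regime and prove the orthogonality of the non-local term $\lambda_2(K\ast|u|^2)u$ along the profiles — a feature absent from the purely local analysis of \cite{KillipOhPoVi2017}.
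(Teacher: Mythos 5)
Your proposal follows essentially the same concentration--compactness/rigidity road map as the paper: reduction of scattering to a global $L^{10}_{t,x}$ bound, construction of an almost periodic critical element via the linear profile decomposition and stability theory (with the small-scale profiles approximated by the defocusing quintic flow, and the non-local term decoupled using Lemma \ref{LLk}), and a localized virial argument driven by the uniform positivity of $I$ together with $|x(t)|=o(t)$ from zero momentum. The variational input, the virial constant $8I(u)$, and the rigidity endgame all match Sections \ref{SC1}--\ref{Su8}.

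There is, however, one step where your writeup as stated would not close: you propose to run the induction by \emph{minimizing $E(u_0)$} over the non-scattering sub-threshold data. Because $\K$ is a genuinely two-parameter region, energy sub-level sets of $\K$ are not coercive: by \eqref{IIN}, $\|\nabla u\|_{L^2}^2\lesssim E(u)\big(1-\sqrt{M(u)/M(Q_1)}\big)^{-1}$, and data with arbitrarily small energy but mass approaching $M(Q_1)$ have unbounded kinetic energy. Hence $\sup\{\|u\|_{L^{10}_{t,x}}:(M(u_0),E(u_0))\in\K,\ E(u_0)\le E\}$ is not finite for any $E>0$, a minimizing sequence for the energy need not be bounded in $H^1$ (so no profile decomposition applies), and the induction hypothesis ``all data of smaller energy scatter'' does not come with the uniform spacetime bounds needed to sum the nonlinear profiles and invoke the stability lemma. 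This is precisely why the paper (following \cite{KillipOhPoVi2017}) inducts on the functional $\L(M,E)$, which blows up at $\partial\Omega$, is monotone in each argument (Lemma \ref{Lel}(iii)), and whose sub-level sets $\{\L\le\tau\}$ with $\tau<\tau_c$ are uniformly coercive via \eqref{Edp}; the Pythagorean splitting of mass and energy then forces each profile to satisfy $\L(\psi^j_n)\le\tau_c-\epsilon_1$, which is what makes the decoupling argument quantitative. A second, minor point: your claimed uniform bound $\inf_t I(u(t))\ge c_0>0$ for an arbitrary sub-threshold solution does not follow directly from the definition \eqref{Mef} (which requires $I=0$ exactly, not $I\to0$); one would need a rescaling argument in the spirit of Lemma \ref{LG1}. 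The paper avoids this by proving the bound only for the critical element (Proposition \ref{BuIo}(v)), where precompactness of the orbit yields a strong limit $v$ with $I(v)=0$ and $\L(v)=\tau_c$, contradicting Lemma \ref{Lel}(ii); since the virial argument is only ever applied to $u_c$, this is all that is needed.
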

Notice that by Theorem \ref{TheS} and Remark \ref{Re1} we infer that if the initial data $u_{0}\in H^{1}(\R^{3})$ satisfies 
$\|u_{0}\|_{L_{x}^{2}}<\|S\|_{L_{x}^{2}}$, then the corresponding solution $u(t)$ to \eqref{NLS} scatter in $H^{1}(\R^{3})$;
compare with Proposition \ref{ThDs}.

Our proof scattering result is based on the argument of Killip-Oh-Pocovnicu-Visan \cite{KillipOhPoVi2017}, which treated the cubic-quintic problem in three dimensions.  We remark that the so-called unstable regime in the
original dipole model can be thought of as a focusing cubic nonlinearity (see Remark 2.3 in \cite{AntonelliSparber2011}), and therefore it is natural to use Killip-Oh-Pocovnicu-Visan's \cite{KillipOhPoVi2017} approach to study the equation \eqref{NLS}. In particular, we adopt the concentration compactness approach to induction on energy. However, in the present situation new technical problems appear related especially to the presence of the dipole interaction kernel that makes the analysis more delicate.
\subsection*{Stable regime}  We say that $\lambda_{1}$ and $\lambda_{2}\in \R$ belong to the stable regime if \eqref{UR} 
is not satisfied, i.e.,
\begin{equation}\label{ER1}
\begin{cases} 
\lambda_{1}-\frac{4\pi}{3}\lambda_{2}\geq 0\quad if\quad \lambda_{2}>0,\\
\lambda_{1}+\frac{8\pi}{3}\lambda_{2}\geq 0\quad if\quad  \lambda_{2}<0.\\
\end{cases} 
\end{equation}
Notice that in this case we have that $-\lambda_{1}\|\varphi\|^{4}_{L^{4}}-\lambda_{2}\|(K\ast|\varphi|^{2})|\varphi|^{2}\|_{L^{1}}\leq 0$
for every $\varphi\in H^{1}(\R^{3})$. In particular, from Lemma \ref{Phozine} (see also \cite[Theorem 3.3 (1)]{LuoSty2020}) below we infer that  no solutions to stationary problem \eqref{Ellip} exists  in $H^{1}(\R^{3})$. Moreover, we can use the same argument as in Theorem \ref{TheS}, with some natural modifications (see \cite[Section 7]{DuyHolmerRoude2008} and \cite{Lafontaine2016} for more details), to establish the following result.
\begin{corollary}
Let $\lambda_{1}$, $\lambda_{2}$ belong to the stable regime \eqref{ER1}. Then every solution of \eqref{NLS} scatters in $H^{1}(\R^{3})$.
\end{corollary}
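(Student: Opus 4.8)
The plan is to re-run the concentration-compactness / induction-on-energy scheme behind Theorem~\ref{TheS}, exploiting the fact that in the stable regime there is neither an energy threshold nor a sign obstruction. First I would record the coercivity supplied by \eqref{ER1}: since, by definition of the stable regime, $\lambda_{1}\|f\|_{L^{4}}^{4}+\lambda_{2}\|(K\ast|f|^{2})|f|^{2}\|_{L^{1}}\ge 0$ for every $f\in H^{1}(\R^{3})$ (equivalently $\lambda_{1}+\lambda_{2}\widehat K(\xi)\ge 0$ for all $\xi$, using the bounds $-\tfrac{4\pi}{3}\le\widehat K\le\tfrac{8\pi}{3}$), the energy dominates the kinetic energy,
\[
E(u)\ \ge\ \tfrac12\|\nabla u\|_{L^{2}}^{2}+\tfrac16\|u\|_{L^{6}}^{6}\ \ge\ \tfrac12\|\nabla u\|_{L^{2}}^{2}.
\]
Together with Theorem~\ref{Th1}, this shows every solution is global and obeys the uniform bound $\|\nabla u(t)\|_{L^{2}}^{2}\le 2E(u_{0})$ for all $t\in\R$. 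Hence it suffices to prove that every global solution has finite scattering-type spacetime norm (say $\|u\|_{L^{10}_{t,x}(\R\times\R^{3})}<\infty$) with a bound depending only on $E(u_{0})$; standard Strichartz arguments then promote this to $H^{1}$ scattering, and Proposition~\ref{ThDs} furnishes the base of the induction at small energy.

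Next I would set up the induction on energy exactly as in the proof of Theorem~\ref{TheS}: put $E_{c}:=\sup\{E_{0}\ge0:\ \text{every solution with }E(u_{0})\le E_{0}\text{ scatters}\}$ and assume, for contradiction, $E_{c}<\infty$. Using the $\dot H^{1}$ linear profile decomposition for $e^{it\Delta}$ adapted to \eqref{NLS} --- the same one used for Theorem~\ref{TheS}, which already builds the nonlocal term $(K\ast|u|^{2})u$ into the construction and decoupling of nonlinear profiles --- together with the perturbation/stability theory for \eqref{NLS}, one extracts a minimal-energy blowup solution $u_{c}$ with $E(u_{c})=E_{c}$ that, exactly as in \cite{KillipOhPoVi2017} and in the proof of Theorem~\ref{TheS}, is almost periodic modulo translations in $H^{1}(\R^{3})$ (the scaling parameter being confined because the lower-order interaction $\lambda_{1}|u|^{2}u+\lambda_{2}(K\ast|u|^{2})u$ breaks scale invariance). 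Since $u_{c}\neq0$ has precompact orbit in $H^{1}$ modulo translations, $\inf_{t}\|\nabla u_{c}(t)\|_{L^{2}}>0$: otherwise $u_{c}(t_{n})\to0$ in $H^{1}$ along a sequence and $u_{c}$ would scatter by Proposition~\ref{ThDs}.

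For the rigidity step I would use a truncated virial (localized Morawetz) argument in the spirit of \cite[Section~7]{DuyHolmerRoude2008} and \cite{Lafontaine2016}. The decisive simplification here, compared with Theorem~\ref{TheS}, is that in the stable regime the virial functional \eqref{Virial} is \emph{unconditionally} coercive: since $\tfrac34\big(\lambda_{1}\|u_{c}\|_{L^{4}}^{4}+\lambda_{2}\|(K\ast|u_{c}|^{2})|u_{c}|^{2}\|_{L^{1}}\big)\ge0$,
\[
I(u_{c}(t))\ \ge\ \|\nabla u_{c}(t)\|_{L^{2}}^{2}+\|u_{c}(t)\|_{L^{6}}^{6}\ \ge\ c(E_{c})>0
\]
uniformly in $t$. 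Then the truncated virial quantity $V_{R}(t):=\int\chi(x/R)\,x\cdot\IM\big(\bar u_{c}\nabla u_{c}\big)\,dx$ satisfies $V_{R}'(t)\ge \tfrac12 c(E_{c})$ for all $t$ once $R$ is chosen large (depending on the compactness modulus of the orbit of $u_{c}$), which is incompatible with the a priori bound $|V_{R}(t)|\lesssim_{E_{c}}R$ --- a contradiction. Hence $E_{c}=\infty$ and every solution of \eqref{NLS} scatters.

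The main obstacle, as for Theorem~\ref{TheS}, is the nonlocal, sign-indefinite, non-integrable kernel $K$ of \eqref{Dipolo}. Two points require care: (i) the dipolar term must enter both the energy and the virial identity with the favorable sign \emph{precisely} when \eqref{ER1} holds --- this is where the constants $\tfrac{4\pi}{3}$ and $\tfrac{8\pi}{3}$ arise, namely as the sharp $L^{\infty}$ bounds $-\tfrac{4\pi}{3}\le\widehat K(\xi)\le\tfrac{8\pi}{3}$ of the Fourier multiplier of $K$; and (ii) because $K\ast(\cdot)$ is long-range, one cannot simply discard $K\ast|u_{c}|^{2}$ outside $\supp\chi(\cdot/R)$ when estimating $V_{R}'$, so the truncation errors must instead be absorbed using Hardy--Littlewood--Sobolev and Calder\'on--Zygmund bounds for $K\ast(\cdot)$ together with the almost periodicity of $u_{c}$. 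Once these nonlocal estimates --- already available from the proof of Theorem~\ref{TheS} --- are in hand, the remaining details are a routine adaptation of that proof.
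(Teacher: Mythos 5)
Your overall strategy is exactly the one the paper intends: it gives no detailed argument for this corollary, saying only that one reruns the proof of Theorem~\ref{TheS} with ``natural modifications,'' and your two key observations are the correct ones --- in the stable regime $\lambda_{1}+\lambda_{2}\widehat K(\xi)\ge 0$, so by Plancherel the quartic part of the energy is nonnegative, giving $E(u)\ge\tfrac12\|\nabla u\|_{L^{2}}^{2}$ with no mass restriction, and likewise $I(u)\ge\|\nabla u\|_{L^{2}}^{2}+\|u\|_{L^{6}}^{6}$ unconditionally, which removes both the threshold $\K$ and the sign issue in the rigidity step. The treatment of the nonlocal term in the localized virial via \cite[Lemma 6.2]{BellazziniForcella2019} is also the same device used in the proof of Theorem~\ref{TheS}.

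There is, however, one genuine gap: the induction quantity. You induct on the energy alone, defining $E_{c}$ as a supremum over energy levels. This fails at two points. First, the base case is not supplied by Proposition~\ref{ThDs}: that result requires $\|u_{0}\|_{L^{2}}<\delta$ with $\delta$ depending on $\|u_{0}\|_{\dot H^{1}}$, and small energy controls only $\|\nabla u_{0}\|_{L^{2}}$, not the mass; data with tiny energy and huge mass are not covered, so it is not known a priori that $E_{c}>0$. Second, and more seriously, a minimizing sequence $u_{n}$ with $E(u_{n})\downarrow E_{c}$ is bounded only in $\dot H^{1}$; its masses may diverge, and then the linear profile decomposition (Theorem~\ref{Profi}) and the mass/energy Pythagorean decoupling (Lemma~\ref{Lde}), both of which require $H^{1}$-boundedness, cannot be invoked to extract the critical element. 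The repair is standard but necessary: run the induction on a quantity equivalent to $\|u\|_{H^{1}}^{2}$, e.g.\ $\tau=M(u)+E(u)$ (note that in the stable regime $E(u)+M(u)\sim\|u\|_{H^{1}}^{2}$ by the coercivity you established, playing the role of Lemma~\ref{Lel}(iv); the paper's functional $\L$ cannot be reused verbatim since $\Omega=\emptyset$ here). With that change the base case is exactly Proposition~\ref{Spcc}, each nontrivial profile in Scenario~2 has $M+E\le\tau_{c}-\delta$ and scatters by the inductive hypothesis, and the rest of your argument, including the rigidity step, goes through.
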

Outline of the paper. At the end of Introduction we fix notation to be used throughout the rest of paper. In Section \ref{S:preli}
we collect some useful results, including Stricharzt estimates. In Section \ref{S:es}, we prove the Theorem \ref{Th1} and Proposition \ref{ThDs}. In Section \ref{S:2} we consider the  sharp $\alpha$-Gagliardo-Nirenberg-H\"older inequality \eqref{SGNH}. 
In Section \ref{SC1}, we give the description of the set $\K$. Section \ref{S:Tes} is devoted to a stability result for \eqref{NLS} and a profile decomposition property. Section \ref{Sps} is devoted to the construction of a critical solution. Finally, in Section \ref{Su8} we prove the scattering result by a rigidity argument (Theorem \ref{TheS}).

\subsection*{Notations}
We write $A\lesssim B$ or $B\gtrsim A$  to denote $A\leq CB$ for some $C>0$. If  $A \lesssim B \lesssim A$ we write $A\sim B$.
For a function $u:I\times \R^{3}\rightarrow \C$, $I\subset \R$, we write
\[ \|  u \|_{L_{t}^{q}L^{r}_{x}(I\times \R^{3})}=\|  \|u(t) \|_{L^{r}_{x}(\R^{3})}  \|_{L^{q}_{t}(I)}
 \]
with $1\leq q\leq r\leq\infty$. Moreover, the Fourier transform on $\R^{3}$ is defined by 
\[\hat{f}(\xi)=\int_{\R^{3}}e^{-ix\cdot\xi}f(x)dx.
\]
We say that $(q,r)$ is admissible if $2\leq q,r\leq\infty$ and $\frac{2}{q}+\frac{3}{r}=\frac{3}{2}$. Also, we define
\[\| u  \|_{S^{0}(I)}:= \sup\left\{ \|  u \|_{L_{t}^{q}L^{r}_{x}(I\times \R^{3})}: \quad \text{$(q,r)$ is admissible}\right\}.
\]
We use the ``Japanese bracket'' $\<\nabla\>=(1-\Delta)^{1/2}$ and we define the Sobolev norms 
\[\|u\|_{H^{s,r}(\R^{3})}:=\|\<\nabla\>^{s} u\|_{L^{r}_{x}(\R^{3})}.\]
Given $p$, we let $p'$ denote the conjugate of $p$.

\section{Preliminaries}\label{S:preli}  
We have the following global-in-time Strichartz estimates.
\begin{lemma}[Strichartz estimates]
Let $(q,r)$ be an admissible pair. Then the solution $u$ of $(i\partial_{t}+\Delta) u=F$ with initial data $u_{0}$
obeys
\[\| u  \|_{L_{t}^{q}L^{r}_{x}(I\times \R^{3})} \lesssim \| u_{0}\|_{L^{2}_{x}(\R^{3})}+ \| F\|_{L_{t}^{\tilde{q}'}L^{\tilde{r}'}_{x}(I\times \R^{3})}, \]
where $2\leq \tilde{q},\tilde{r}\leq\infty$ with $\frac{2}{\tilde{q}}+\frac{3}{\tilde{r}}=\frac{3}{2}$ and for some interval $I\subset \R$. 
\end{lemma}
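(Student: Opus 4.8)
The plan is to reduce everything to the standard dispersive estimate for the free Schr\"odinger propagator and then invoke the abstract machinery of Keel--Tao. First I would recall that on $\R^{3}$ the group $e^{it\Delta}$ is unitary on $L^{2}_{x}$ and satisfies the dispersive bound $\|e^{it\Delta}f\|_{L^{\infty}_{x}}\lesssim |t|^{-3/2}\|f\|_{L^{1}_{x}}$; interpolating these two facts yields $\|e^{it\Delta}f\|_{L^{r}_{x}}\lesssim |t|^{-3(1/2-1/r)}\|f\|_{L^{r'}_{x}}$ for every $2\le r\le\infty$. This is precisely the decay estimate hypothesis (with decay exponent $\sigma=3/2$) in the Keel--Tao framework, and the admissibility relation $\frac{2}{q}+\frac{3}{r}=\frac{3}{2}$, $2\le q,r\le\infty$, is exactly the associated scaling condition; note that in three dimensions the endpoint pair $(2,6)$ is admissible.

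Next I would establish the homogeneous estimate $\|e^{it\Delta}u_{0}\|_{L^{q}_{t}L^{r}_{x}(\R\times\R^{3})}\lesssim \|u_{0}\|_{L^{2}_{x}}$ by the $TT^{*}$ method: this bound is equivalent to $\big\|\int_{\R}e^{-is\Delta}F(s)\,ds\big\|_{L^{2}_{x}}\lesssim\|F\|_{L^{q'}_{t}L^{r'}_{x}}$, which in turn is equivalent to the bilinear bound
\[
\Big|\iint_{\R\times\R}\big\langle e^{i(t-s)\Delta}F(s),\,G(t)\big\rangle\,ds\,dt\Big|\lesssim \|F\|_{L^{q'}_{t}L^{r'}_{x}}\,\|G\|_{L^{\tilde q'}_{t}L^{\tilde r'}_{x}}.
\]
Inserting the pointwise-in-time dispersive estimate reduces the left-hand side to a fractional integration inequality in the time variable; away from the double endpoint this is the Hardy--Littlewood--Sobolev inequality, while the endpoint $q=\tilde q=2$ is handled by the dyadic decomposition and bilinear real-interpolation argument of Keel--Tao. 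Dualizing gives the inhomogeneous, non-retarded estimate $\big\|\int_{\R}e^{i(t-s)\Delta}F(s)\,ds\big\|_{L^{q}_{t}L^{r}_{x}}\lesssim\|F\|_{L^{\tilde q'}_{t}L^{\tilde r'}_{x}}$.

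It then remains to pass to the retarded (Duhamel) operator $F\mapsto\int_{0}^{t}e^{i(t-s)\Delta}F(s)\,ds$. When $\tilde q'<q$, that is, when at least one of the two pairs is off the endpoint, this follows from the non-retarded estimate by the Christ--Kiselev lemma; the genuine double endpoint $q=\tilde q=2$ must instead be incorporated directly into the Keel--Tao argument. Finally, writing the Duhamel formula $u(t)=e^{it\Delta}u_{0}-i\int_{0}^{t}e^{i(t-s)\Delta}F(s)\,ds$ for the solution of $(i\partial_{t}+\Delta)u=F$ and combining the homogeneous bound with the retarded inhomogeneous bound, restricted to the time interval $I$, yields the claimed inequality; the implicit constant is uniform in $I$ because all the estimates above are first proved on all of $\R$ and then restricted by duality and Hölder. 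The one genuinely delicate point is the endpoint Strichartz estimate, namely the pair $(q,r)=(2,6)$ and, correspondingly, $(\tilde q',\tilde r')=(2,6/5)$ on the source term, which is not accessible by elementary $TT^{*}$ together with Hardy--Littlewood--Sobolev and requires the full bilinear interpolation scheme of Keel--Tao; everything else is classical and only a matter of bookkeeping.
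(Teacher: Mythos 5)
Your proposal is correct and is the standard Keel--Tao proof of the Strichartz estimates (dispersive bound, $TT^{*}$ plus Hardy--Littlewood--Sobolev, bilinear interpolation at the endpoint $(2,6)$, and Christ--Kiselev for the retarded operator). The paper states this lemma without proof as a classical fact, so your argument simply supplies the well-known details that the paper implicitly invokes; there is nothing to compare beyond that.
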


As mentioned in the introduction, we can assume that the applied magnetic field is parallel to the $x_{3}$-axis. In this case 
$n=(0,0,1)$ and we have that (see \eqref{Dipolo})
\[K(x)=\frac{x^{2}_{1}+x^{2}_{2}-2x^{2}_{3}}{|x|^{5}}.\]
 We have the following result (see \cite{CarlesMarkoSpaber2008} for more details).
\begin{lemma}\label{LLk}
The operator $K: u\mapsto K\ast u$ can be extended as a continuous operator on $L^{p}(\R^{3})$ for $1<p<\infty$. In addition,
the Fourier transform of $K$ is given by
\[\hat{K}(\xi)= \frac{4\pi}{3} \(\frac{2 \xi^{2}_{3}- \xi^{1}_{2}-\zeta^{2}_{2}}{|\xi|^{2}}   \).\]
In particular, $\hat{K}(\xi)\in [-\frac{4\pi}{3},\frac{8\pi}{3}]$.
\end{lemma}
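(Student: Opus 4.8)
The plan is to identify $K$ (understood as a tempered distribution via its principal value) with a second-order derivative of the Newtonian potential plus a Dirac mass, compute $\hat K$ explicitly, and then deduce the $L^{p}$ bounds from standard Calder\'on--Zygmund / Mikhlin multiplier theory; the structural reason this works is that the numerator $x_{1}^{2}+x_{2}^{2}-2x_{3}^{2}=|x|^{2}-3x_{3}^{2}$ of $K$ is a solid harmonic polynomial of degree $2$. First I would record the distributional identity for the Hessian of $|x|^{-1}$: starting from $-\Delta(4\pi|x|)^{-1}=\delta_{0}$ in $\R^{3}$, differentiating the locally integrable function $|x|^{-1}$ twice in $\mathcal{D}'(\R^{3})$ and carefully tracking the boundary contribution at the origin (integrate by parts on $\{|x|>\eps\}$, Taylor-expand the test function to second order on the sphere $\{|x|=\eps\}$, and let $\eps\to0$) gives, for all $j,k$,
\[
\partial_{j}\partial_{k}\frac{1}{|x|}=\mathrm{p.v.}\,\frac{3x_{j}x_{k}-\delta_{jk}|x|^{2}}{|x|^{5}}-\frac{4\pi}{3}\delta_{jk}\,\delta_{0}.
\]
Specializing to $j=k=3$ and rearranging yields $K=-\partial_{3}^{2}(|x|^{-1})-\tfrac{4\pi}{3}\delta_{0}$ as tempered distributions, and this is precisely the distribution whose convolution defines the dipolar operator $u\mapsto K\ast u$.

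Next I would take Fourier transforms with the paper's convention. Since $|\xi|^{2}\widehat{f}=\widehat{-\Delta f}$ forces $\widehat{|x|^{-1}}(\xi)=4\pi|\xi|^{-2}$, and since $\widehat{\partial_{3}^{2}f}(\xi)=-\xi_{3}^{2}\widehat f(\xi)$ and $\widehat{\delta_{0}}=1$, the identity from the first step immediately gives
\[
\hat K(\xi)=\frac{4\pi\,\xi_{3}^{2}}{|\xi|^{2}}-\frac{4\pi}{3}=\frac{4\pi}{3}\cdot\frac{3\xi_{3}^{2}-|\xi|^{2}}{|\xi|^{2}}=\frac{4\pi}{3}\cdot\frac{2\xi_{3}^{2}-\xi_{1}^{2}-\xi_{2}^{2}}{|\xi|^{2}},
\]
which is the claimed formula. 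Because $0\le \xi_{3}^{2}/|\xi|^{2}\le 1$, we obtain $\hat K(\xi)=4\pi\big(\xi_{3}^{2}/|\xi|^{2}-\tfrac13\big)\in[-\tfrac{4\pi}{3},\tfrac{8\pi}{3}]$, with the extreme values attained for $\xi$ orthogonal to, respectively parallel to, the dipole axis.

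Finally, for the $L^{p}$ statement I would note that $\hat K$ is homogeneous of degree $0$ and $C^{\infty}$ on $\R^{3}\setminus\{0\}$, hence satisfies the Mikhlin--H\"ormander bounds $|\partial_{\xi}^{\alpha}\hat K(\xi)|\lesssim_{\alpha}|\xi|^{-|\alpha|}$; equivalently, $\mathrm{p.v.}\,K$ is a Calder\'on--Zygmund convolution kernel, homogeneous of degree $-3$, whose angular profile $\Omega(\omega)=1-3\omega_{3}^{2}$ is smooth on $S^{2}$ and has mean zero, $\int_{S^{2}}\Omega\,d\sigma=4\pi-3\cdot\frac{4\pi}{3}=0$ (using $\int_{S^{2}}\omega_{3}^{2}\,d\sigma=\tfrac{4\pi}{3}$). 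Either formulation yields $\|K\ast f\|_{L^{p}(\R^{3})}\lesssim_{p}\|f\|_{L^{p}(\R^{3})}$ for every $1<p<\infty$, by the Mikhlin multiplier theorem (or the classical theorem on singular integrals with homogeneous kernels of this type), so $K\ast$ extends continuously to $L^{p}(\R^{3})$. The only genuinely delicate point is the first step: one must correctly isolate the non-integrable part $3x_{j}x_{k}/|x|^{5}$ as a principal value and retain the precise constant $-\tfrac{4\pi}{3}$ in front of $\delta_{0}$; once that bookkeeping is done, the Fourier computation and the harmonic-analysis input are routine.
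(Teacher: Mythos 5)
Your proof is correct; the paper itself gives no argument for this lemma and simply defers to the cited reference \cite{CarlesMarkoSpaber2008}, where essentially the same computation appears: write $K=-\partial_{3}^{2}(|x|^{-1})-\tfrac{4\pi}{3}\delta_{0}$ via the distributional Hessian of the Newtonian potential, read off $\hat K$, and invoke Calder\'on--Zygmund/Mikhlin theory for the $L^{p}$ bounds. (Note the statement in the paper contains typographical errors in the numerator, which should read $2\xi_{3}^{2}-\xi_{1}^{2}-\xi_{2}^{2}$ exactly as you derived.)
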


We will need moreover the following result (see \cite[Lemma 8.1.]{LuoSty2020}).
\begin{lemma}\label{Lne}
Let $\lambda_{1}$, $\lambda_{2}$ belong to the unstable regime \eqref{UR}. Then for each $m>0$ there exists some $f\in H^{1}(\R^{3})$
with $\| f \|_{L^{2}}=m$ such that $N(f)>0$, where 
\[N(f):=-\lambda_{1}\|f\|^{4}_{L^{4}}-\lambda_{2}\|(K\ast|f|^{2})|f|^{2}\|_{L^{1}}.\]
\end{lemma}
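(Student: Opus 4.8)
The plan is to reduce, by the degree-$4$ homogeneity of $N$ (i.e.\ $N(cf)=c^{4}N(f)$ for $c>0$), to exhibiting a single nonzero $\phi\in H^{1}(\R^{3})$ with $N(\phi)>0$; rescaling such a $\phi$ to have $L^{2}$-norm $m$ then yields the claim. To produce such a $\phi$ I would start from an arbitrary nonzero Schwartz function and apply an anisotropic dilation in the $x_{3}$-variable, $\phi_{\mu}(x):=\phi(x_{1},x_{2},\mu x_{3})\in\mathcal S(\R^{3})\subset H^{1}(\R^{3})$, letting $\mu\to0^{+}$ or $\mu\to+\infty$ according to the sign of $\lambda_{2}$; the point is that this concentrates $\widehat{|\phi_{\mu}|^{2}}$ near the plane $\{\xi_{3}=0\}$ (resp.\ near the $\xi_{3}$-axis), which is exactly where $\widehat K$ attains its extreme value $-\tfrac{4\pi}{3}$ (resp.\ $\tfrac{8\pi}{3}$).

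The key identity is the Fourier-side expression for $N$. Using Plancherel together with $\widehat{K\ast g}=\widehat{K}\,\widehat{g}$ (and $\widehat K$ real) and $\|\phi\|_{L^{4}}^{4}=\||\phi|^{2}\|_{L^{2}}^{2}$, one obtains
\[
N(\phi)=-\lambda_{1}\|\phi\|_{L^{4}}^{4}-\lambda_{2}\int_{\R^{3}}(K\ast|\phi|^{2})|\phi|^{2}\,dx=-\frac{1}{(2\pi)^{3}}\int_{\R^{3}}\bigl(\lambda_{1}+\lambda_{2}\widehat{K}(\xi)\bigr)\bigl|\widehat{|\phi|^{2}}(\xi)\bigr|^{2}\,d\xi .
\]
Since $|\phi_{\mu}|^{2}(x)=|\phi|^{2}(x_{1},x_{2},\mu x_{3})$ we have $\widehat{|\phi_{\mu}|^{2}}(\xi)=\mu^{-1}\widehat{|\phi|^{2}}(\xi_{1},\xi_{2},\xi_{3}/\mu)$, so after the substitution $\xi_{3}\mapsto\mu\eta_{3}$,
\[
N(\phi_{\mu})=-\frac{\mu^{-1}}{(2\pi)^{3}}\int_{\R^{3}}\bigl(\lambda_{1}+\lambda_{2}\widehat{K}(\eta_{1},\eta_{2},\mu\eta_{3})\bigr)\bigl|\widehat{|\phi|^{2}}(\eta)\bigr|^{2}\,d\eta=:\mu^{-1}G(\mu).
\]
Invoking the explicit formula $\widehat{K}(\xi)=\frac{4\pi}{3}\,\frac{2\xi_{3}^{2}-\xi_{1}^{2}-\xi_{2}^{2}}{|\xi|^{2}}$ and the bound $|\widehat K|\le\frac{8\pi}{3}$ from Lemma \ref{LLk}, one checks that $\widehat{K}(\eta_{1},\eta_{2},\mu\eta_{3})\to-\frac{4\pi}{3}$ as $\mu\to0^{+}$ and $\widehat{K}(\eta_{1},\eta_{2},\mu\eta_{3})\to\frac{8\pi}{3}$ as $\mu\to+\infty$, for a.e.\ $\eta\in\R^{3}$; dominated convergence then gives
\[
\lim_{\mu\to0^{+}}G(\mu)=-\Bigl(\lambda_{1}-\tfrac{4\pi}{3}\lambda_{2}\Bigr)\|\phi\|_{L^{4}}^{4},\qquad\lim_{\mu\to+\infty}G(\mu)=-\Bigl(\lambda_{1}+\tfrac{8\pi}{3}\lambda_{2}\Bigr)\|\phi\|_{L^{4}}^{4}.
\]

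To conclude I would split according to the unstable regime \eqref{UR}. If $\lambda_{2}>0$, then $\lambda_{1}-\frac{4\pi}{3}\lambda_{2}<0$, hence $\lim_{\mu\to0^{+}}G(\mu)>0$ and $N(\phi_{\mu})=\mu^{-1}G(\mu)>0$ for all sufficiently small $\mu>0$; if $\lambda_{2}<0$, then $\lambda_{1}+\frac{8\pi}{3}\lambda_{2}<0$, hence $\lim_{\mu\to+\infty}G(\mu)>0$ and $N(\phi_{\mu})>0$ for all sufficiently large $\mu$ (the remaining case $\lambda_{2}=0$, $\lambda_{1}<0$ being immediate from $N(\phi)=-\lambda_{1}\|\phi\|_{L^{4}}^{4}$). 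In each case the rescaled function $f:=(m/\|\phi_{\mu}\|_{L^{2}})\phi_{\mu}$ satisfies $\|f\|_{L^{2}}=m$ and $N(f)>0$. The only point that requires care is the limit step: one must verify that the directions where the pointwise convergence of $\widehat{K}(\eta_{1},\eta_{2},\mu\eta_{3})$ fails --- namely $\{\eta_{1}=\eta_{2}=0\}$ as $\mu\to0^{+}$, respectively $\{\eta_{3}=0\}$ as $\mu\to+\infty$ --- form a Lebesgue-null set, so that dominated convergence (with dominating function $(\frac{8\pi}{3}|\lambda_{2}|+|\lambda_{1}|)\,|\widehat{|\phi|^{2}}|^{2}\in L^{1}$) indeed applies; everything else is a routine rescaling computation. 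As an alternative one may simply take $\phi$ a Gaussian and compute $\widehat{|\phi_{\mu}|^{2}}$ explicitly, which makes all the limits transparent.
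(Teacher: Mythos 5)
Your proof is correct: the Plancherel identity for $N$, the anisotropic dilation $\phi_{\mu}(x)=\phi(x_{1},x_{2},\mu x_{3})$ concentrating $\widehat{|\phi|^{2}}$ where $\widehat{K}$ attains its extreme values $-\tfrac{4\pi}{3}$ and $\tfrac{8\pi}{3}$, and the dominated-convergence step (with the null exceptional sets you identify) all check out, and the degree-$4$ homogeneity correctly reduces the normalization $\|f\|_{L^{2}}=m$ to a triviality. The paper itself gives no proof of this lemma --- it only cites \cite[Lemma 8.1]{LuoSty2020} --- and your argument is precisely the standard one underlying that reference, so there is nothing further to compare.
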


Finally, we introduce the energy-critical NLS in $\R^{3}$,
\begin{equation}\label{ECgNLS}
\begin{cases} 
(i\partial_{t}+\Delta) u=|u|^{4}u,\\
u(0)=u_{0}\in \dot{H}_{x}^{1}(\R^{3}),
\end{cases} 
\end{equation}
which play an important role in the scattering theory of \eqref{NLS}. We have the following result.
\begin{theorem}[Global well-posedness]\label{WGPQN}
For any $u_{0}\in \dot{H}_{x}^{1}(\R^{3})$ there exists a unique global solution $u\in C(\R, \dot{H}_{x}^{1}(\R^{3}))$ to \eqref{ECgNLS}.
Furthermore, we have the following space-time bound
\[
\| u  \|^{10}_{L^{10}_{t,x}(\R\times\R^{3})}\leq C({\|u_{0}\|_{\dot{H}_{x}^{1}}}).
\]
\end{theorem}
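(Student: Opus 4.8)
The plan is to recall that this is precisely the theorem of Colliander--Keel--Staffilani--Takaoka--Tao \cite{TaoKell2008} on the defocusing energy-critical NLS in three space dimensions, so the argument is not original; I would only indicate its structure.

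First I would set up the local theory: by the $\dot H^{1}$-critical Strichartz estimate and a contraction mapping argument in the associated Strichartz space (in particular controlling the $L^{10}_{t,x}$ norm), every $u_{0}\in\dot H^{1}_{x}(\R^{3})$ generates a unique maximal-lifespan solution, and a standard continuation criterion shows that the solution is global and scatters as soon as $\|u\|_{L^{10}_{t,x}}$ is finite on its lifespan. Since small data automatically enjoy this property, the whole statement reduces to the a priori bound $\|u\|^{10}_{L^{10}_{t,x}(I\times\R^{3})}\le C(\|u_{0}\|_{\dot H^{1}_{x}})$ on every compact subinterval $I$ of existence.

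Next I would establish this bound by induction on the energy. Arguing by contradiction, let $E_{c}$ be the supremum of the energies below which all solutions have finite global spacetime norm, and suppose $E_{c}<\infty$. Applying the linear profile decomposition in $\dot H^{1}$, together with the stability theory for \eqref{ECgNLS} and a nonlinear profile construction, one extracts a minimal-energy blowup solution whose orbit is precompact in $\dot H^{1}_{x}$ modulo the scaling and translation symmetries; controlling the modulation parameters then reduces this critical element to one of the rigid scenarios (self-similar, soliton-like, or a low-to-high frequency cascade).

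Finally I would rule out every scenario using a Morawetz-type estimate. For a solution localized at frequency $\sim 1$, the interaction (Lin--Strauss) Morawetz inequality, or its frequency-localized version, yields a bound of the form
\[
\int_{I}\int_{\R^{3}}\frac{|u(t,x)|^{6}}{|x|}\,dx\,dt\ \lesssim\ \|u\|_{L^{\infty}_{t}\dot H^{1/2}_{x}}^{2},
\]
whose right-hand side stays bounded while the left-hand side must diverge for a nontrivial almost periodic solution; a matching scaling obstruction handles the self-similar and cascade cases. (In \cite{TaoKell2008} this is carried out via a frequency-localized interaction Morawetz inequality together with a careful accounting of the energy flow between high and low frequencies; the Kenig--Merle rigidity scheme \cite{KenigMerle2006} gives an alternative packaging of the same mechanism, cf. also \cite{KiiVisan2008}.) The contradiction forces $E_{c}=\infty$, which delivers the a priori bound and hence global existence, uniqueness, and the claimed space-time estimate. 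The hard part is exactly this last step: producing quantitative Morawetz control that survives the frequency localization and prevents energy from concentrating.
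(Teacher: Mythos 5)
Your proposal is correct and takes the same route as the paper: Theorem \ref{WGPQN} is exactly the Colliander--Keel--Staffilani--Takaoka--Tao theorem (radial case due to Bourgain), and the paper likewise gives no independent proof but simply cites \cite{Bourgain1999}, \cite{TaoKell2008} and \cite{KiiVisan2008}. Your sketch of the CKSTT argument (local theory plus reduction to an a priori $L^{10}_{t,x}$ bound, induction on energy/minimal counterexample, frequency-localized interaction Morawetz) is an accurate summary of the cited proof, so there is nothing to add.
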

For the proof of Theorem \ref{WGPQN} in the radial setting see Bourgain \cite{Bourgain1999}, and in its full generality
see Colliander, Keel, Staffilani, Takaoka and Tao \cite{TaoKell2008}; see also \cite{ KiiVisan2008}.

\section{Well-posedness and small data scattering}\label{S:es} 
In this section we study the local theory of \eqref{NLS}.  It plays a important role in  scattering theory. The idea is originally due to Zhang \cite{Zhang2006}. First we need the following result, which gives local existence,  uniqueness and 
conservation of mass and energy for any initial data in $H^{1}(\R^{3})$.

\begin{proposition} \label{LWP}
Assume that $\lambda_{1}$ and  $\lambda_{2}$ do not vanish simultaneously. For any $u_{0}\in H^{1}(\R^{3})$, there exists a unique solution $u\in C(I, H^{1}(\R^{3}))$ to the Cauchy problem \eqref{NLS} on some interval $I:=(-T_{{min}}, T_{{max}})\ni 0$. Furthermore,\\
{ \rm (i)} There is conservation of mass and energy.\\
{ \rm (ii)} $u\in L_{t}^{q}L_{x}^{r}(K\times \R^{3})$  for every  compact time interval $K\subset I$ and for any admissible pair $(q,r)$. \\
{ \rm (iii)} If $T_{{max}}$ is finite, then 
\[\lim_{t\rightarrow T_{{max}}}\|u(t)\|_{H^{1}_{x}}=\infty\quad \text{or} \quad \sup_{(q,r)-\text{admissible}}\| u\|_{L_{t}^{q}H^{1,r}_{x}((0,T_{{max}})\times \R^{3})}=\infty,\]
for every admissible par $(q,r)$ with $2<r<6$.
An analogous statement holds when $T_{{min}}$ is finite.\\
{ \rm (iv)} The solution $u$ depends  continuously on the initial data $u_{0}$.
\end{proposition}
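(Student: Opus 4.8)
The plan is to establish Proposition~\ref{LWP} by a standard fixed-point argument adapted to the mixed power/nonlocal nonlinearity, following Zhang~\cite{Zhang2006} and Cazenave's treatment of the energy-subcritical case. The key observation is that the dipolar term is harmless: by Lemma~\ref{LLk} the operator $K\ast$ is bounded on every $L^{p}(\R^{3})$, $1<p<\infty$, so $\|(K\ast|u|^{2})u\|_{H^{1,r'}}$ is controlled by the same Strichartz norms of $u$ that control $\||u|^{2}u\|_{H^{1,r'}}$ (one distributes the gradient, uses $L^{p}$-boundedness of $K\ast$, then H\"older). Thus the nonlinearity $F(u)=\lambda_{1}|u|^{2}u+\lambda_{2}(K\ast|u|^{2})u+|u|^{4}u$ splits into an energy-\emph{sub}critical part (the two cubic terms) and the energy-\emph{critical} quintic part, and the analysis is driven by the quintic term, exactly as in the pure energy-critical problem.

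\medskip\noindent\textbf{Key steps.}
First I would fix an admissible pair adapted to the quintic term, e.g. work in the space $X(I):=C(I,H^{1}_{x})\cap L^{q}_{t}H^{1,r}_{x}(I\times\R^{3})$ for a suitable admissible $(q,r)$ with $2<r<6$ (the choice $(q,r)=(10/3,10/3)$ or the $\dot H^{1}$-critical pair, together with the endpoint-type pairs needed to close the estimate), and consider the Duhamel map
\[
\Phi(u)(t)=e^{it\Delta}u_{0}-i\int_{0}^{t}e^{i(t-s)\Delta}F(u(s))\,ds .
\]
Using the Strichartz estimate of Lemma~2.1, the $L^{p}$-boundedness of $K\ast$ from Lemma~\ref{LLk}, the fractional Leibniz rule, and Sobolev embedding, I would show that for $\|u_{0}\|_{H^{1}}\le A$ there is $T=T(A)>0$ such that $\Phi$ is a contraction on a ball of $X(I)$ with $I=(-T,T)$; the subcritical cubic terms contribute a factor $T^{\theta}$ for some $\theta>0$ while the quintic term is handled by taking the ball small in the $L^{q}_{t}H^{1,r}_{x}$ norm (using that $e^{it\Delta}u_{0}$ has small Strichartz norm on a short enough interval). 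This yields local existence and uniqueness, i.e. (ii) on the existence interval. Conservation of mass and energy (i) then follows by the usual approximation argument: regularize the data, use persistence of $H^{1}$-regularity plus the equation to differentiate $M(u(t))$ and $E(u(t))$ in time and check the derivatives vanish (the nonlocal term contributes $\RE\int(K\ast|u|^{2})\bar u\,\partial_{t}u$, which is real because $K$ is real and even so $K\ast$ is self-adjoint), then pass to the limit. The blow-up alternative (iii) is the standard continuation argument: if $T_{\max}<\infty$ and the relevant Strichartz norm stayed finite, one could use the integral equation to extend the solution past $T_{\max}$, a contradiction; conversely boundedness of $\|u(t)\|_{H^{1}}$ on $[0,T_{\max})$ feeds the local theory with uniform time step, again extending the solution. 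Continuous dependence (iv) follows from the same contraction estimates applied to the difference of two solutions, on a possibly shorter common interval, then iterated.

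\medskip\noindent\textbf{Main obstacle.}
The one genuinely delicate point is controlling the \emph{full} $H^{1}$ (not just $\dot H^{1}$) norm of the critical term $|u|^{4}u$ in a way that both (a) closes the contraction on a short interval for data of arbitrary size and (b) is compatible with the blow-up criterion phrased in (iii) in terms of $\sup_{(q,r)}\|u\|_{L^{q}_{t}H^{1,r}_{x}}$ for $2<r<6$ rather than the $\dot H^{1}$-critical Strichartz norm. This forces a careful bookkeeping of which Strichartz pairs are used for the quintic term versus the cubic terms, and an interpolation argument to convert smallness of the scaling-critical Strichartz norm of $e^{it\Delta}u_{0}$ on short intervals into the bounds needed to run the fixed point; the nonlocal term adds bookkeeping but no new analytic difficulty because $K\ast$ preserves all the relevant function spaces by Lemma~\ref{LLk}.
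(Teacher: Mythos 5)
Your overall strategy is exactly the one the paper intends: the paper gives no proof at all for Proposition \ref{LWP}, simply citing \cite[Section 4.5]{CB}, and your outline (Duhamel fixed point in Strichartz spaces, with the dipolar term reduced to the cubic one via the $L^{p}$-boundedness of $K\ast$ from Lemma \ref{LLk}, the cubic terms treated as subcritical and the quintic as critical) is precisely that standard argument. So in approach there is nothing to compare: you are reconstructing the omitted proof along the same lines the author delegates to the reference.

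One point in your write-up is genuinely wrong, however, and it matters for how part (iii) must be proved. Because the quintic term is energy-critical, the local existence time is \emph{not} a function of $\|u_{0}\|_{H^{1}}$ alone: the contraction closes on $(-T,T)$ only once $\|e^{it\Delta}u_{0}\|$ in the scaling-critical Strichartz norm is small on that interval, and the $T$ achieving this depends on the profile of $u_{0}$, not merely on a bound $\|u_{0}\|_{H^{1}}\le A$ (it is uniform only on compact, not bounded, subsets of $H^{1}$). Consequently your claims that ``there is $T=T(A)>0$'' and, in the proof of (iii), that ``boundedness of $\|u(t)\|_{H^{1}}$ on $[0,T_{\max})$ feeds the local theory with uniform time step, again extending the solution'' are false for this equation; if the latter were true, the blow-up alternative could be stated purely in terms of $\|u(t)\|_{H^{1}}$, which is exactly what the ``or'' in (iii) is designed to avoid. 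The correct proof of (iii) uses only your first mechanism: assuming both alternatives fail, the finiteness of the Strichartz norms on $(0,T_{\max})$ shows via the Duhamel formula that $u(t)$ converges in $H^{1}$ as $t\to T_{\max}$, and one restarts the local theory from that limit. With that correction (and the corresponding adjustment to the continuous-dependence argument, which should be run via a stability estimate on a fixed interval where the reference solution has finite critical Strichartz norm), your proposal is sound.
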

\begin{proof}
We can prove the proposition in the same way as in \cite[Section 4.5]{CB}. So we omit the details.
\end{proof}

\begin{lemma} \label{GWP1}
The solution $u$ to \eqref{NLS} given in Proposition \ref{LWP} is global, i.e. $I=\R$.
\end{lemma}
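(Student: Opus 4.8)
The plan is to establish global existence by proving an a priori bound on $\|u(t)\|_{H^{1}}$ on the maximal existence interval $I=(-T_{\min},T_{\max})$, and then invoking the blow-up alternative (iii) of Proposition \ref{LWP}. By conservation of mass, $\|u(t)\|_{L^{2}}$ is already controlled, so the task reduces to bounding $\|\nabla u(t)\|_{L^{2}}$ uniformly on $I$. The key point is to extract this bound from conservation of energy, which requires controlling the (possibly negative, possibly large) dipolar terms $\frac{\lambda_{1}}{4}\|u\|_{L^{4}}^{4}$ and $\frac{\lambda_{2}}{4}\|(K\ast|u|^{2})|u|^{2}\|_{L^{1}}$ by the good terms $\frac12\|\nabla u\|_{L^{2}}^{2}$ and $\frac16\|u\|_{L^{6}}^{6}$.

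First I would write, for fixed $t\in I$,
\[
\frac12\|\nabla u(t)\|_{L^{2}}^{2}+\frac16\|u(t)\|_{L^{6}}^{6}
= E(u_{0}) + \frac14 N(u(t)),
\]
where $N(v):=-\lambda_{1}\|v\|_{L^{4}}^{4}-\lambda_{2}\|(K\ast|v|^{2})|v|^{2}\|_{L^{1}}$ as in Lemma \ref{Lne}. Since $\hat K\in[-\frac{4\pi}{3},\frac{8\pi}{3}]$ by Lemma \ref{LLk}, Plancherel gives $\big|\|(K\ast|v|^{2})|v|^{2}\|_{L^{1}}\big|\lesssim \|v\|_{L^{4}}^{4}$, so $N(v)\lesssim \|v\|_{L^{4}}^{4}$ with a constant depending only on $\lambda_{1},\lambda_{2}$. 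Next, by the Gagliardo--Nirenberg interpolation $\|v\|_{L^{4}}\lesssim \|v\|_{L^{2}}^{1/4}\|v\|_{L^{6}}^{3/4}$ (equivalently interpolating $L^{4}$ between $L^{2}$ and $L^{6}$, the latter controlled by $\dot H^{1}$), one obtains
\[
\|u(t)\|_{L^{4}}^{4}\lesssim \|u_{0}\|_{L^{2}}\,\|u(t)\|_{L^{6}}^{3}
\leq \varepsilon \|u(t)\|_{L^{6}}^{6} + C_{\varepsilon}\|u_{0}\|_{L^{2}}^{2}
\]
by Young's inequality, for any $\varepsilon>0$. Choosing $\varepsilon$ small enough that the $\|u(t)\|_{L^{6}}^{6}$ contribution is absorbed into the $\frac16\|u(t)\|_{L^{6}}^{6}$ on the left, I get
\[
\frac12\|\nabla u(t)\|_{L^{2}}^{2} \leq E(u_{0}) + C\|u_{0}\|_{L^{2}}^{2},
\]
a bound depending only on the conserved quantities and hence uniform in $t\in I$. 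Combined with mass conservation this yields $\sup_{t\in I}\|u(t)\|_{H^{1}}<\infty$.

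Finally, by the blow-up alternative (iii) of Proposition \ref{LWP}, if $T_{\max}$ were finite then either $\|u(t)\|_{H^{1}}\to\infty$ or a certain Strichartz-type norm blows up as $t\to T_{\max}$; the $H^{1}$ bound just obtained rules out the first option, and the second is excluded by the standard local-theory argument (the local existence time in Proposition \ref{LWP} depends only on $\|u_{0}\|_{H^{1}}$, so a uniform $H^{1}$ bound gives a uniform lower bound on the increment and the solution can be continued past $T_{\max}$), a contradiction; likewise for $T_{\min}$. Hence $I=\R$. The main obstacle is the absorption step: one must check that the critical $L^{6}$ term genuinely dominates the subcritical $L^{4}$/dipolar terms after interpolation, i.e. that the defocusing quintic nonlinearity controls the (focusing-sign) cubic and dipolar contributions regardless of the sign or size of $\lambda_{1},\lambda_{2}$ — this is exactly where the energy-subcriticality of the dipolar perturbation relative to the quintic term is used, and it is the reason no smallness assumption on the data is needed. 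A minor technical point is to justify the energy identity for $H^{1}$ solutions (rather than merely smooth ones), which follows from the conservation of energy already asserted in Proposition \ref{LWP}(i) together with a density/continuity argument.
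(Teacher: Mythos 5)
Your Step 1 (the uniform $H^{1}$ bound from conservation of mass and energy, obtained by bounding $N(u)\lesssim\|u\|_{L^{4}}^{4}$ via the Fourier multiplier bound on $\hat K$ and then absorbing the quartic term into $\frac16\|u\|_{L^{6}}^{6}$ by interpolation and Young) is correct and is essentially identical to the paper's Step 1. The gap is in your continuation argument. You assert that ``the local existence time in Proposition \ref{LWP} depends only on $\|u_{0}\|_{H^{1}}$, so a uniform $H^{1}$ bound gives a uniform lower bound on the increment.'' This is false here, and it is precisely the point at which the energy-critical character of the quintic term $|u|^{4}u$ bites: for an energy-critical nonlinearity the local existence time supplied by the standard fixed-point argument depends on the \emph{profile} of the data (through the smallness of $\|e^{it\Delta}u_{0}\|_{L^{10}_{t,x}([0,T]\times\R^{3})}$, which tends to $0$ as $T\to0$ for each fixed $u_{0}$ but not uniformly over bounded sets in $H^{1}$), not merely on its $H^{1}$ norm. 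Consequently a uniform $H^{1}$ bound alone does not rule out the second alternative in Proposition \ref{LWP}(iii) (blow-up of the Strichartz norms), and the solution cannot be continued by the argument you describe.

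The paper closes this gap with a second, genuinely nontrivial step (Zhang's argument): it compares $u$ with the \emph{global} solution $v$ of the defocusing quintic NLS \eqref{CNLS} with the same data, whose global space-time bounds are supplied by the deep Theorem \ref{WGPQN} (Colliander--Keel--Staffilani--Takaoka--Tao). The difference $w=u-v$ solves a perturbed equation in which the remaining cubic and dipolar terms are energy-subcritical and gain a factor $|I|^{1/2}$ in the relevant space-time estimates; this yields a bound $\|u\|_{B^{1}_{[0,T]}}\lesssim C$ on an interval of length $T=T(\|u_{0}\|_{H^{1}})$ that \emph{does} depend only on the $H^{1}$ norm. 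Only after this step can one iterate using time-translation invariance together with the a priori $H^{1}$ bound of Step 1. Without some substitute for this comparison (or another mechanism exploiting that the critical term is defocusing), your proof does not go through.
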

\begin{proof}
\textsl{Step 1.} \textsl{Kinetic energy control}. By Plancherel identity, the energy functional can be rewritten as
\[E(u)=\frac{1}{2}\int_{\R^{3}}|\nabla u|^{2}dx+\frac{1}{4}\frac{1}{(2\pi)^{3}}\int_{\R^{3}}(\lambda_{1}+\lambda_{2}\widehat{K}(\xi))|\widehat{u^{2}}|^{2}d\xi+\frac{1}{6}\int_{\R^{3}}|u|^{6}dx.\]
Since $\widehat{K}(\xi)\in \left[-\frac{4}{3}\pi, \frac{8}{3}\pi\right]$, we infer that there exists $\beta>0$ such that
\[ -\lambda_{1}\|u\|^{4}_{L^{4}}-\lambda_{2}\|(u\ast|u|^{2})|u|^{2}\|_{L^{1}}\leq \beta\|u\|^{4}_{L^{4}},\]
and so we have
\begin{align*}
E(u)+M(u)\gtrsim \|\nabla u\|^{2}_{L^{2}}+\frac{1}{3}\int_{\R^{3}}|u|^{2}\(|u|^{2}-\frac{3}{4}\beta \)^{2}dx.
\end{align*}
This implies that 
\begin{equation}\label{UT}
\sup_{t\in I}[\|\nabla u(t)\|^{2}_{L^{2}}+\| u(t)\|^{2}_{L^{2}}] \lesssim E(u_{0})+M(u_{0}).
\end{equation}
\textsl{Step 2.} \textsl{Local space-time bound}.  We consider the defocusing quintic Schr\"odinger equation
\begin{equation}\label{CNLS}
\begin{cases} 
(i\partial_{t}+\Delta) v=|v|^{4}v,\\
v(0)=u_{0}\in \dot{H}^{1}(\R^{3}).
\end{cases} 
\end{equation}
By Theorem \ref{WGPQN} we have that there exists a unique global solution $v\in C(\R, \dot{H}_{x}^{1})$ to \eqref{CNLS} which satisfies 
\begin{equation}\label{Uc}
 \|v\|_{L_{t}^{q}L_{x}^{r}(\R\times \R^{3})} + \|v\|_{L_{t}^{q}\dot{H}_{x}^{1,r}(\R\times \R^{3})} 
\leq  C(\|u_{0}\|_{\dot{H}_{x}^{1}}, \|u_{0}\|_{L^{2}}),
 \end{equation}
for every admissible pair $(q,r)$. We set $w:=u-v$, where $u$ is the local solution to \eqref{NLS} given in Proposition \ref{LWP} and $v$ is the 
global solution of the problem \eqref{CNLS}. On taking the difference of the two equations we infer that $w$ 
satisfies the Cauchy problem:
\begin{equation}\label{0IP}
\begin{split} 
(i\partial_{t}+\Delta) w=\lambda_{1}|w+u|^{2}(w+u)+\lambda_{2}(K\ast|w+u|^{2})(w+u)\\
+|w+u|^{4}(w+u)-|v|^{4}v,\\
\end{split} 
\end{equation}
 with initial condition $w(0)=0$. For a time slab $I\subset \R$ we define the spaces
\[\begin{split}
\dot{B}^{0}_{I}=&L_{t}^{\frac{10}{3}}L_{x}^{\frac{10}{3}}(I\times \R^{3})\cap L_{t}^{10}L_{x}^{\frac{30}{13}}(I\times \R^{3})\cap L_{t}^{8}L_{x}^{\frac{12}{5}}(I\times \R^{3}),\\
&\dot{B}^{1}_{I}=\left\{u; \nabla u\in \dot{B}^{0}_{I}  \right\}, \quad {B}^{1}_{I}=\dot{B}^{0}_{I}\cap \dot{B}^{1}_{I}.
\end{split}\]
From Lemma 1.7 in \cite{Zhang2006} we have
\[\begin{split}
&\|\nabla^{s}(|u|^{4}u)\|_{L_{t}^{\frac{10}{7}}L_{x}^{\frac{10}{7}}(I\times \R^{3})}\lesssim \|u\|^{4}_{\dot{B}^{1}_{I}}\|u\|_{\dot{B}^{s}_{I}}\\
&\|\nabla^{s}(|u|^{2}u)\|_{L_{t}^{\frac{8}{7}}L_{x}^{\frac{12}{7}}(I\times \R^{3})}\lesssim |I|^{\frac{1}{2}}\|u\|^{2}_{\dot{B}^{1}_{I}}\|u\|_{\dot{B}^{s}_{I}},
\end{split}\]
for $s=0$, $1$. On the other hand, using the fact that $\frac{1}{12}=\frac{1}{12}+\frac{1}{2}$, $\frac{1}{2}=\frac{1}{12}+\frac{5}{12}$, 
by H\"older inequality and Sobolev embedding we obtain for $s=0$, $1$
\[\begin{split}
\|\nabla^{s}[(K\ast|u|^{2})u]\|_{L_{t}^{\frac{8}{7}}L_{x}^{\frac{12}{7}}(I\times \R^{3})}\lesssim |I|^{\frac{1}{2}}\|u\|^{2}_{\dot{B}^{1}_{I}}\|u\|_{\dot{B}^{s}_{I}}.
\end{split}\]
The proof of Lemma \ref{GWP1} now follows the same lines as in \cite[Section 2.3]{Zhang2006}. Indeed, by the inequalities above, \eqref{Uc} and using the  inductive argument developed in \cite[Pag 431]{Zhang2006}  we have that the initial value problem \eqref{0IP} has a unique solution $w$ on an interval $[0, T]$ ($T=T(\|u_{0}\|_{H^{1}})>0$) such that $\|w\|_{B^{1}_{[0, T]}} \lesssim C$. Since, by construction,  $u=v+w$ on $[0, T]$, we obtain a  unique solution $u$ of \eqref{NLS} on an interval $[0, T]$ such that
\begin{equation}\label{LCU}
\|u\|_{B^{1}_{[0, T]}}\leq \|w\|_{B^{1}_{[0, T]}}+\|v\|_{B^{1}_{[0, T]}} \lesssim_{u_{0}} C
\end{equation}
Therefore, since the equation \eqref{NLS} is time translation invariant, by \eqref{UT} and \eqref{LCU} we infer that the solution is global. 
This completes the proof of Lemma.
\end{proof}

\begin{proof}[{Proof of Theorem \ref{Th1}}]
Theorem \ref{Th1} follows immediately from Proposition \ref{LWP} and Lemma \ref{GWP1}.
\end{proof}

In the following result we show that the solution scatters in $H^{1}(\R^{3})$ when the initial data is small in $L^{2}(\R^{3})$.
\begin{proposition}\label{Scd}
Let $u_{0}\in H^{1}(\R^{3})$. Then there exists a constant $\delta>0$ (depending on the $\dot{H}^{1}(\R^{3})$-norm of $u_{0}$) such that if, $\|u_{0}\|_{L^{2}}\leq \delta$, then the corresponding solution $u$ of \eqref{NLS} satisfies the global space time bound $\|u\|_{L^{q}_{t}H^{1, r}_{x}(\R\times \R^{3})} \lesssim 1$ for any admissible pair $(p,r)$. In particular, $u$ scatters in $H^{1}_{x}$.
\end{proposition}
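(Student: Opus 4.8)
The plan is to treat $|u|^{4}u$ as the principal, energy-critical nonlinearity and the two cubic terms $\lambda_{1}|u|^{2}u$ and $\lambda_{2}(K\ast|u|^{2})u$ as a perturbation whose size is controlled by a power of the conserved mass $\|u_{0}\|_{L^{2}}$; the quintic part is then handled by comparison with the free-quintic evolution \eqref{ECgNLS} and the energy-critical long-time perturbation theory. The first observation is that, by Theorem \ref{Th1} together with the kinetic-energy control \eqref{UT} in the proof of Lemma \ref{GWP1}, if $\|u_{0}\|_{L^{2}}\le\delta\le1$ then $E(u_{0})\le C(\|u_{0}\|_{\dot H^{1}})$ and hence $\sup_{t\in\R}\bigl(\|\nabla u(t)\|_{L^{2}_{x}}+\|u(t)\|_{L^{2}_{x}}\bigr)\le C(\|u_{0}\|_{\dot H^{1}})$, while mass conservation gives the crucial smallness $\|u(t)\|_{L^{2}_{x}}=\|u_{0}\|_{L^{2}}\le\delta$ for every $t$.

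First I would record the relevant nonlinear estimates in a dual Strichartz space on an arbitrary interval $J\subset\R$. For the cubic terms the point is to reserve an $L^{\infty}_{t}L^{2}_{x}$-slot for one factor of $u$: distributing $\langle\nabla\rangle$ by the fractional Leibniz rule and using H\"older together with Sobolev embedding (and, for the nonlocal term, the boundedness of $K\ast$ on every $L^{p}_{x}$, $1<p<\infty$, from Lemma \ref{LLk}), one obtains a bound of the schematic form
\[
\bigl\|\langle\nabla\rangle\bigl(\lambda_{1}|u|^{2}u+\lambda_{2}(K\ast|u|^{2})u\bigr)\bigr\|_{L^{2}_{t}L^{6/5}_{x}(J)}\ \lesssim\ \|u\|_{L^{\infty}_{t}L^{2}_{x}(J)}\,\|u\|_{L^{\infty}_{t}L^{6}_{x}(J)}\,\|\langle\nabla\rangle u\|_{L^{2}_{t}L^{6}_{x}(J)},
\]
with \emph{no} power of $|J|$; by the a priori bounds above this is $\lesssim\delta\,C(\|u_{0}\|_{\dot H^{1}})\,\|\langle\nabla\rangle u\|_{L^{2}_{t}L^{6}_{x}(J)}$, i.e. linear in a Strichartz norm with a small coefficient. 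For the quintic term one only has the usual energy-critical estimate $\|\langle\nabla\rangle(|u|^{4}u)\|_{N(J)}\lesssim\|u\|_{L^{10}_{t,x}(J)}^{4}\|\langle\nabla\rangle u\|_{S^{1}(J)}$, with no gain, where $S^{1}$ and $N$ denote the Strichartz and dual spaces at $\dot H^{1}$-regularity; hence the quintic part cannot be absorbed globally and must be fed into a perturbation argument.

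Next I would set up the comparison with the energy-critical flow. Let $v$ be the global solution of \eqref{ECgNLS} with $v(0)=u_{0}$; by Theorem \ref{WGPQN} and Strichartz, $\|v\|_{S^{1}(\R)}\le M_{0}:=C(\|u_{0}\|_{\dot H^{1}})$ (the $L^{2}$-regularity Strichartz norms of $v$ also involve $\|u_{0}\|_{L^{2}}\le1$, hence only $\|u_{0}\|_{\dot H^{1}}$). Partition $\R$ into $N=N(M_{0})$ consecutive intervals $I_{j}$ on each of which $\|v\|_{L^{10}_{t,x}(I_{j})}\le\eta$, with $\eta$ a small universal constant. On each $I_{j}$ I would run the standard short-interval stability argument comparing $u$ — which solves the energy-critical equation with forcing error equal to the cubic terms — with $v$: the quintic contribution is controlled because $\|v\|_{L^{10}_{t,x}(I_{j})}$ is small, the cubic forcing is bounded by $\delta\,C(\|u_{0}\|_{\dot H^{1}})\,\|u\|_{S^{1}(I_{j})}$ via the estimate above and absorbed once $\delta$ is small, and the finiteness of $\|u\|_{S^{1}(I_{j})}$ needed to legitimize this absorption is supplied by the local theory of Proposition \ref{LWP}. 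Iterating over $j=1,\dots,N$ propagates the closeness $\|u(t)-v(t)\|_{H^{1}_{x}}\lesssim j\,\delta\,C(\|u_{0}\|_{\dot H^{1}})$ across the partition, so choosing $\delta=\delta(\|u_{0}\|_{\dot H^{1}})$ small enough yields $\|u\|_{S^{1}(\R)}\le\sum_{j}\|u-v\|_{S^{1}(I_{j})}+\|v\|_{S^{1}(\R)}\le C(\|u_{0}\|_{\dot H^{1}})$; the full statement $\|u\|_{L^{q}_{t}H^{1,r}_{x}(\R\times\R^{3})}\lesssim1$ for every admissible $(q,r)$ then follows from one more application of Strichartz and the two nonlinear estimates. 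Finally, the global space-time bound gives scattering in the usual way: writing $e^{-it\Delta}u(t)=u_{0}-i\int_{0}^{t}e^{-is\Delta}\mathcal F(u)(s)\,ds$ with $\mathcal F(u)$ the full nonlinearity, the tails $\|\langle\nabla\rangle\mathcal F(u)\|$ over $(t_{1},t_{2})$ tend to $0$ as $t_{1},t_{2}\to\pm\infty$, so $e^{-it\Delta}u(t)$ converges in $H^{1}$ and $u_{\pm}:=u_{0}-i\int_{0}^{\pm\infty}e^{-is\Delta}\mathcal F(u)\,ds$ are the scattering states.

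The main obstacle is precisely the cubic estimate with a genuinely small constant and no power of $|J|$: the bound quoted from Lemma~1.7 of \cite{Zhang2006} in Section \ref{S:es} carries a factor $|J|^{1/2}$ and is therefore useless on the whole line, so one must instead exploit the smallness of the conserved mass by reserving the $L^{\infty}_{t}L^{2}_{x}$-slot above — morally, the cubic nonlinearity is $\dot H^{1/2}$-critical and $\|u(t)\|_{\dot H^{1/2}_{x}}\lesssim\|u_{0}\|_{L^{2}}^{1/2}\|\nabla u(t)\|_{L^{2}_{x}}^{1/2}\lesssim\delta^{1/2}C(\|u_{0}\|_{\dot H^{1}})$ is small. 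The dipolar term requires no new idea beyond the $L^{p}$-boundedness of $K\ast$ (Lemma \ref{LLk}), and the remaining input, the long-time perturbation theory for the energy-critical equation in $\R^{3}$, is by now standard (see \cite{TaoKell2008, KiiVisan2008}).
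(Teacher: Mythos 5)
Your argument is correct and is essentially the paper's own proof (which follows Zhang): both compare $u$ with the global solution $v$ of the defocusing quintic equation \eqref{ECgNLS}, decompose $\R$ into finitely many intervals on which $v$ is small in the critical space-time norm, and use the smallness of the conserved mass to control the cubic and dipolar terms globally in time. The only difference is bookkeeping: you harvest the smallness directly from $\|u\|_{L^{\infty}_{t}L^{2}_{x}}=\|u_{0}\|_{L^{2}}\le\delta$ by reserving an $L^{\infty}_{t}L^{2}_{x}$ slot in the H\"older estimate, whereas the paper extracts it from the $L^{2}$-level Strichartz norms of $v$ via the estimates \eqref{Pgd}--\eqref{Cic}.
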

\begin{proof}
\textsl{Step 1.} \textsl{Global space-time bound.}   Following \cite[Subsection 3.3]{Zhang2006}, the 
idea is to approximate \eqref{NLS} by \eqref{CNLS} globally in the time. 
With this in mind, we define the spaces
\[\dot{Z}^{0}_{I}=L_{t}^{\frac{10}{3}}L_{x}^{\frac{10}{3}}(I\times \R^{3})\cap L_{t}^{10}L_{x}^{\frac{30}{13}}(I\times \R^{3}),\quad
\dot{Z}^{1}_{I}=\left\{u; \nabla u\in \dot{Z}^{0}_{I} \right\}, \quad  {Z}^{1}_{I}=\dot{Z}^{0}_{I}\cap \dot{Z}^{1}_{I},\]
for a time slab $I\subset \R$. 
Let $v$ the global solution of the defocusing quintic Schr\"odinger equation \eqref{CNLS} with $v(0)=u_{0}$. Again, as in Lemma \ref{GWP1} above, we set $w:=u-v$, where $u$ is the global solution to \eqref{NLS} with initial data $u_{0}$. Then $w$ satisfies the Cauchy problem 
\begin{equation}\label{0IPN}
\begin{split} 
(i\partial_{t}+\Delta) w=\lambda_{1}|w+v|^{2}(w+v)+\lambda_{2}(K\ast|w+v|^{2})(w+v)\\
+|w+v|^{4}(w+v)-|v|^{4}v,\\
\end{split} 
\end{equation}
with initial data $w(0)=0$. Making using of H\"older inequality we have (see \cite[Pag 437]{Zhang2006})
\begin{equation}\label{Pgd}
\begin{split}
\||w+v|^{2}(w+v)\|_{L_{t}^{\frac{10}{7}}H_{x}^{1, \frac{10}{7}}(I\times \R^{3})}\lesssim \|w\|^{2}_{{Z}^{1}_{I}}+
\|w\|_{{Z}^{1}_{I}}\|v\|_{{\dot{Z}}^{1}_{I}}\\
+\|v\|_{{\dot{Z}}^{1}_{I}}\|v\|_{{\dot{Z}}^{0}_{I}}\|w\|_{{Z}^{1}_{I}}+\|v\|^{2}_{{\dot{Z}}^{1}_{I}}\|v\|_{{\dot{Z}}^{0}_{I}}.
\end{split}
\end{equation}
Now, H\"older inequality implies that
\begin{equation}\label{Pge}
\begin{split}
\||w+v|^{4}(w+v)-|v|^{4}v\|_{L_{t}^{\frac{10}{7}}H_{x}^{1, \frac{10}{7}}(I\times \R^{3})}\lesssim 
\sum^{4}_{i=1}\|w\|^{5-i}_{{Z}^{1}_{I}}\|v\|^{i}_{{\dot{Z}}^{1}_{I}}.
\end{split}
\end{equation}
Moreover, since the operator $f\mapsto K\ast f$ is continuous on $L^{p}(\R^{3})$ 
for each $1<p<\infty$ (see Lemma \ref{LLk}),  we obtain
\[
\begin{split}
\|(K\ast|w+v|^{2})(w+v)\|_{H_{x}^{1, \frac{10}{7}}}\lesssim 
\|(K\ast (1+|\nabla|)|w+v|^{2})(w+v)\|_{L_{x}^{2}}\|w+v\|_{L^{5}_{x}}\\
+\|K\ast|w+v|^{2}\|_{L^{\frac{5}{2}}_{x}}+\|(1+|\nabla|)\, (w+v)\|_{L^{\frac{10}{3}}_{x}}\\
\lesssim \|w+v\|^{2}_{L^{5}_{x}}\|(1+|\nabla|)\, (w+v)\|_{L^{\frac{10}{3}}_{x}}.
\end{split}
\]
Therefore,
\[
\begin{split}
\|(K\ast|w+v|^{2})(w+v)\|_{L^{\frac{10}{7}}_{t}H_{x}^{1, \frac{10}{7}}}\lesssim 
 \|w+v\|^{2}_{L^{5}_{t}L^{5}_{x}}\|w+v\|_{L^{\frac{10}{3}}_{t}H^{1,\frac{10}{3}}_{x}},
\end{split}
\]
and thanks to inequality $\|f\|^{2}_{L^{5}_{t}L^{5}_{x}}\leq \|f\|_{{\dot{Z}}^{1}_{I}}\|f\|_{{\dot{Z}}^{0}_{I}}$
 we deduce that
\begin{equation}\label{Cic}
\begin{split}
\|(K\ast|w+v|^{2})(w+v)\|_{L^{\frac{10}{7}}_{t}H_{x}^{1, \frac{10}{7}}}\lesssim \|w\|^{2}_{{Z}^{1}_{I}}+
\|w\|_{{Z}^{1}_{I}}\|v\|_{{\dot{Z}}^{1}_{I}}\\
+\|v\|_{{\dot{Z}}^{1}_{I}}\|v\|_{{\dot{Z}}^{0}_{I}}\|w\|_{{Z}^{1}_{I}}+\|v\|^{2}_{{\dot{Z}}^{1}_{I}}\|v\|_{{\dot{Z}}^{0}_{I}}.
\end{split}
\end{equation}
Combining the estimates \eqref{Pgd}, \eqref{Pge} and \eqref{Cic} and using the inductive argument 
developed in \cite[Pag 438]{Zhang2006} we can show that there exists $\delta:=\delta(\dot{H}^{1}(\R^{3}))>0$ such that if $\|u_{0}\|_{L^{2}}\leq \delta$, then the initial value problem \eqref{0IPN} has a unique solution in $\R$ such that $\|w\|_{{Z}^{1}_{\R}}\leq C(\|u_{0}\|_{H^{1}})$. From \eqref{Uc}, one gets
\[ \|u\|_{{Z}^{1}_{\R}}\leq  \|w\|_{{Z}^{1}_{\R}}+\|v\|_{{Z}^{1}_{\R}}\leq C(\|u_{0}\|_{H^{1}}).\]
Finally, making using of Strichartz estimates, we have
\[\|u\|_{L^{q}_{t}H^{1,r}_{x}(\R\times \R^{3})} \leq C(\|u_{0}\|_{H^{1}}),\]
for any admissible pair $(p,r)$.\\

\textsl{Step 2.} \textsl{Scattering.} According the Duhamel formula, for 
$F(u)=\lambda_{1}|u|^{2}u+\lambda_{2}(K\ast|u|^{2})u+|u|^{4}u$, we define
\[u_{+}(t)=u_{0}-i\int^{t}_{0}e^{i(t-\tau)}F(u(\tau))d\tau.\]
This is well defined in $H^{1}(\R^{3})$, since by H\"older (see \eqref{Pgd}, \eqref{Pge} and \eqref{Cic}) and Strichartz estimates we get
\[\begin{split}
\|u_{+}(t_{1})-u_{+}(t_{2})\|_{H^{1}}
&\lesssim \|u_{+}(t_{1})-u_{+}(t_{2})\|_{L^{\frac{10}{7}}_{t}H_{x}^{1, \frac{10}{7}}([t_{1},t_{2}]\times \R^{3})}\\
&\lesssim \|u\|^{3}_{{Z}^{1}_{[t_{1},t_{2}]}}+\|u\|^{5}_{{Z}^{1}_{[t_{1},t_{2}]}}\rightarrow 0,
\end{split}\]
as $t_{1}$, $t_{2}\rightarrow\infty$.
Analogously we have
\[\begin{split}
\|u(t)-e^{it}u_{+}\|_{H^{1}}\lesssim \|u\|^{3}_{{Z}^{1}_{[t_{1},t_{2}]}}+\|u\|^{5}_{{Z}^{1}_{[t_{1},t_{2}]}},
\end{split}\]
which is converging to $0$ as $t\rightarrow \infty$. This completes the proof of proposition.
\end{proof}

\begin{proof}[{Proof of Proposition \ref{ThDs}}]
Proposition \ref{ThDs}  is an immediately consequence of Proposition \ref{Scd}.
\end{proof}

\section{The sharp Gagliardo-Nirenberg-H\"older inequality}\label{S:2} 

In section we dicuss the sharp $\alpha$-Gagliardo-Nirenberg-H\"older inequality \eqref{SGNH}.
We begin with the following lemma.
\begin{lemma}\label{Phozine}
If $\varphi$ satisfies the equation \eqref{Ellip}, then the following identities hold:
\begin{align} \label{Pho1}
&\|\nabla \varphi\|^{2}_{L^{2}}+\|\varphi\|^{6}_{L^{6}}-N(\varphi)+\omega\|\varphi\|^{2}_{L^{2}}=0\\
&\frac{1}{6}\|\nabla \varphi\|^{2}_{L^{2}}+\frac{1}{6}\|\varphi\|^{6}_{L^{6}}-\frac{1}{4}N(\varphi)
+\frac{\omega}{2}\|\varphi\|^{2}_{L^{2}}=0. \label{Pho23},
\end{align}
where $N(\varphi):=-\lambda_{1}\|\varphi\|^{4}_{L^{4}}-\lambda_{2}\|(K\ast|\varphi|^{2})|\varphi|^{2}\|_{L^{1}}$. In particular, if $\varphi\neq0$, then $\omega>0$ and $N(\varphi)>0$.
\end{lemma}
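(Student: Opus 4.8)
The plan is to derive the two Pohozaev-type identities \eqref{Pho1} and \eqref{Pho23} by testing the elliptic equation \eqref{Ellip} against two natural multipliers and then combine them to extract the sign information on $\omega$ and $N(\varphi)$.

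\textbf{Step 1: the $L^2$-pairing identity.} First I would multiply \eqref{Ellip} by $\bar\varphi$, integrate over $\R^3$, and take the real part. The term $-\Delta Q_\omega$ gives $\|\nabla\varphi\|_{L^2}^2$ after integration by parts; the quintic term $|\varphi|^4\varphi$ gives $\|\varphi\|_{L^6}^6$; the mass term $\omega\varphi$ gives $\omega\|\varphi\|_{L^2}^2$; and the two cubic-type terms $\lambda_1|\varphi|^2\varphi+\lambda_2(K*|\varphi|^2)\varphi$ paired with $\bar\varphi$ yield exactly $\lambda_1\|\varphi\|_{L^4}^4+\lambda_2\|(K*|\varphi|^2)|\varphi|^2\|_{L^1}=-N(\varphi)$. (Here one uses that $K$ is real and even, so the convolution term is real; this is part of the content of Lemma \ref{LLk}.) This produces \eqref{Pho1}. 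I would note a mild regularity caveat: one should first justify that a finite-energy solution of \eqref{Ellip} is smooth enough and decays fast enough to make these integrations by parts legitimate, which follows from elliptic regularity and the mapping properties of $K$ on $L^p$, $1<p<\infty$ (Lemma \ref{LLk}); I would state this briefly rather than belabor it.

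\textbf{Step 2: the scaling/virial identity.} Next I would exploit the dilation $\varphi_\mu(x)=\varphi(x/\mu)$ and differentiate the action $S_\omega(\varphi_\mu)$ at $\mu=1$, or equivalently pair \eqref{Ellip} against $x\cdot\nabla\varphi$ and take real parts (the Pohozaev computation). Each homogeneous term scales in a known way: $\|\nabla\varphi_\mu\|_{L^2}^2=\mu\|\nabla\varphi\|_{L^2}^2$, $\|\varphi_\mu\|_{L^2}^2=\mu^3\|\varphi\|_{L^2}^2$, $\|\varphi_\mu\|_{L^6}^6=\mu^3\|\varphi\|_{L^6}^6$, $\|\varphi_\mu\|_{L^4}^4=\mu^3\|\varphi\|_{L^4}^4$, and — crucially — since $K$ is homogeneous of degree $-3$, the dipolar term $\|(K*|\varphi_\mu|^2)|\varphi_\mu|^2\|_{L^1}=\mu^3\|(K*|\varphi|^2)|\varphi|^2\|_{L^1}$ as well, so $N(\varphi_\mu)=\mu^3 N(\varphi)$. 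Setting $\frac{d}{d\mu}S_\omega(\varphi_\mu)\big|_{\mu=1}=0$ gives a linear relation among $\|\nabla\varphi\|_{L^2}^2$, $\|\varphi\|_{L^6}^6$, $N(\varphi)$, $\omega\|\varphi\|_{L^2}^2$ with coefficients coming from the exponents $1,3,3,3$; after dividing by the appropriate constant this is exactly \eqref{Pho23}. The homogeneity of $K$ is the one place where the dipolar structure matters, and it is the key reason the identity has the same shape as in the local case.

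\textbf{Step 3: extracting signs.} Finally, with both identities in hand I would solve the $2\times 2$ linear system. Subtracting suitable multiples: from \eqref{Pho1} and \eqref{Pho23} one eliminates, say, $N(\varphi)$ to get an expression of the form $c_1\big(\|\nabla\varphi\|_{L^2}^2+\|\varphi\|_{L^6}^6\big)=c_2\,\omega\|\varphi\|_{L^2}^2$ with $c_1,c_2>0$; since the left side is nonnegative and strictly positive when $\varphi\neq0$, we get $\omega>0$. Then plugging $\omega>0$ back into \eqref{Pho1} gives $N(\varphi)=\|\nabla\varphi\|_{L^2}^2+\|\varphi\|_{L^6}^6+\omega\|\varphi\|_{L^2}^2>0$. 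I do not expect any serious obstacle here; the only genuinely technical point is justifying the integration by parts / Pohozaev identity at the $H^1$ regularity level for the nonlocal term, which is handled by the $L^p$-boundedness of $K*\cdot$ together with standard elliptic bootstrapping, and I would dispatch it with a one-line reference to \cite{CarlesMarkoSpaber2008, LuoSty2020}.
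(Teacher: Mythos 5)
Your proof is correct and follows essentially the same route as the paper: pairing \eqref{Ellip} with $\bar\varphi$ gives \eqref{Pho1}, the Pohozaev/dilation computation (equivalently the multiplier $x\cdot\nabla\varphi$, with the homogeneity of degree $-3$ of $K$ making $N$ scale like the other cubic terms) gives \eqref{Pho23}, and combining them yields $\|\nabla\varphi\|_{L^2}^2+\|\varphi\|_{L^6}^6=3\omega\|\varphi\|_{L^2}^2$ and $N(\varphi)=4\omega\|\varphi\|_{L^2}^2$, whence $\omega>0$ and $N(\varphi)>0$. The paper likewise defers the rigorous justification of the integrations by parts to the references, so nothing is missing.
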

\begin{proof}
To obtain \eqref{Pho1}, we multiply \eqref{Ellip} by $\bar{\varphi}$ and integrate over $\R^{3}$. Similarly, 
multiplying \eqref{Ellip} by $x\cdot\nabla \varphi$ and integrate we get \eqref{Pho23}. A rigorous proof  
can be found in \cite[Section 2.1]{LIONSBere1983} (see also \cite[Lemma 2.2]{AntonelliSparber2011}).
On the other hand, combining the identities \eqref{Pho1} and \eqref{Pho23} we obtain
\begin{equation}\label{N11}
N(\varphi)=4\omega \|\varphi\|^{2}_{L^{2}}, \quad  \|\nabla \varphi\|^{2}_{L^{2}}+\|\varphi\|^{6}_{L^{6}}=3\omega \|\varphi\|^{2}_{L^{2}}.
\end{equation}
Thus we infer that $\omega>0$. Finally, in view of \eqref{N11} we see that $N(\varphi)>0$ when $\varphi\neq0$. 
\end{proof}

Through this section we assume that $\lambda_{1}$, $\lambda_{2}$ belong to the unstable regime \eqref{UR}.

\begin{theorem}[$\alpha$-Gagliardo-Nirenberg-H\"older inequality]\label{TheGN}
Let $0<\alpha<\infty$, Then the infimum
\begin{equation}\label{GNI}
C^{-1}_{\alpha}:=\inf_{ f\in \B}\frac{\|f\|_{L^{2}}\|\nabla f\|^{\frac{3}{1+\alpha}}_{L^{2}}\|f\|^{\frac{3\alpha}{1+\alpha}}_{L^{6}}}{-\lambda_{1}\|f\|^{4}_{L^{4}}-\lambda_{2}\|(K\ast|f|^{2})|f|^{2}\|_{L^{1}}},
\end{equation}
where $\B:=\left\{f\in H^{1}(\R^{3}): -\lambda_{1}\|f\|^{4}_{L^{4}}-\lambda_{2}\|(K\ast|f|^{2})|f|^{2}\|_{L^{1}}>0 \right\}$ is  attained. Furthermore,  any  minimizing  sequence of problem \eqref{GNI} is  relatively  compact  in $H^{1}(\R^{3})$ up to dilations, multiplication by constants and translations.\\
(Note that the inequality \eqref{SGNH} holds trivially if $-\lambda_{1}\|f\|^{4}_{L^{4}}-\lambda_{2}\|(K\ast|f|^{2})|f|^{2}\|_{L^{1}}\leq 0$).
\end{theorem}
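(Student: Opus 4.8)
The plan is to establish existence of a minimizer for the quotient \eqref{GNI} by the concentration--compactness method of Lions, exploiting the invariances of the functional. First I would observe that the denominator $N(f)=-\lambda_1\|f\|_{L^4}^4-\lambda_2\|(K\ast|f|^2)|f|^2\|_{L^1}$ and the numerator are both invariant under the three-parameter family of transformations $f\mapsto c\, f(x/\mu)$ (scaling in amplitude $c$ and in space $\mu$) together with translations; this follows from the homogeneity of all four $L^p$-type quantities and the scaling $K(x/\mu)=\mu^3 K(x)$, which makes the convolution term scale like $\|f\|_{L^4}^4$. Hence, given any minimizing sequence $(f_n)\subset\B$, I may rescale so that, say, $\|\nabla f_n\|_{L^2}=\|f_n\|_{L^6}=1$ and $N(f_n)\to C_\alpha$; boundedness in $\dot H^1\cap L^6$ together with the Gagliardo--Nirenberg control $\|f_n\|_{L^4}^4\lesssim\|f_n\|_{L^2}\|\nabla f_n\|_{L^2}^2$ (interpolating $L^4$ between $L^2$ and $L^6$) forces $\|f_n\|_{L^2}$ to stay bounded below and above, so $(f_n)$ is bounded in $H^1(\R^3)$.

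Next I would run Lions' concentration--compactness dichotomy on the sequence of $L^2$-densities (or, more convenient here, on a combination of $|\nabla f_n|^2+|f_n|^6$, since those control the geometry). One rules out \emph{vanishing}: if $f_n\to0$ in $L^p$ for all $2<p<6$, then in particular $\|f_n\|_{L^4}\to0$, and since by Young's inequality and boundedness of $K\ast\cdot$ on $L^2$ one has $\|(K\ast|f_n|^2)|f_n|^2\|_{L^1}\lesssim\|f_n\|_{L^4}^4$, the whole numerator $N(f_n)\to0$, contradicting $N(f_n)\to C_\alpha>0$ (positivity of $C_\alpha$ comes from Lemma \ref{Lne}, which produces functions with $N(f)>0$, hence a finite positive infimum of the reciprocal quotient). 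One rules out \emph{dichotomy} by the usual sub-additivity/strict-superadditivity argument: after splitting $f_n\approx g_n+h_n$ with disjoint supports, the local nonlinearity $N$ is subadditive up to a vanishing error (the cross term $\int(K\ast(g_n\bar h_n+\cdots))(\cdots)$ decays because $K(x)\sim|x|^{-3}$ is integrable away from the origin and the supports separate), while the numerator $\|f\|_{L^2}\|\nabla f\|_{L^2}^{3/(1+\alpha)}\|f\|_{L^6}^{3\alpha/(1+\alpha)}$ is strictly superadditive in the relevant sense, so splitting strictly increases the quotient unless one piece carries everything. Therefore compactness holds: up to translations, $f_n\to f$ in $L^p_{\mathrm{loc}}$ for $2<p<6$ and, upgrading via the tightness, in $L^4$ and in the convolution term, so $N(f_n)\to N(f)=C_\alpha$ while the numerator is lower semicontinuous; the reverse inequality from the quotient being an infimum then gives that $f$ is a minimizer and that $f_n\to f$ strongly in $H^1$ after the normalization.

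The main obstacle I anticipate is handling the nonlocal term $\lambda_2\|(K\ast|f|^2)|f|^2\|_{L^1}$ throughout the dichotomy analysis: the kernel $K$ is not positive, not of definite sign, and only Calder\'on--Zygmund (a principal-value singular integral), so the naive ``energy splits into disjoint pieces plus small cross terms'' bookkeeping needs care. The fix is to work on the Fourier side, where $\widehat{K}\in[-\tfrac{4\pi}{3},\tfrac{8\pi}{3}]$ is a bounded multiplier (Lemma \ref{LLk}), writing $\|(K\ast|f|^2)|f|^2\|_{L^1}=(2\pi)^{-3}\int\widehat K(\xi)|\widehat{|f|^2}(\xi)|^2\,d\xi$; then the term is dominated by $\|f\|_{L^4}^4$ and, crucially, is continuous with respect to $L^4$-convergence of $f_n$, which is exactly what the compactness step delivers. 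For the cross-term estimates in the splitting one combines this $L^4$-continuity with the pointwise decay of $K$ to control the interaction of well-separated bumps. Once the minimizer exists, the stated classification (relative compactness of minimizing sequences modulo dilations, multiplications, translations) is precisely the statement that the dichotomy and vanishing alternatives have been excluded, so nothing further is needed; the explicit value \eqref{apropiate} of $C_\alpha$ will follow in Corollary \ref{GNcoro} by testing with the Euler--Lagrange equation \eqref{Ellip} and invoking the Pohozaev identities of Lemma \ref{Phozine}.
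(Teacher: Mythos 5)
Your overall route is the same as the paper's: treat \eqref{GNI} as a Weinstein-type minimization, use the invariance of the quotient under $f\mapsto cf(\cdot/\mu)$ and translations to normalize a minimizing sequence, get $H^1$-boundedness, exclude vanishing via the lower bound $N(f)\lesssim\|f\|_{L^4}^4\lesssim\|f\|_{L^2}\|f\|_{L^6}^3$ together with Lemma \ref{Lne}, control the nonlocal term through the bounded multiplier $\hat K$, and conclude by a compactness-modulo-symmetries argument. The only genuine difference in mechanism is that you invoke the full Lions vanishing/dichotomy/compactness trichotomy, whereas the paper uses Lieb's compactness lemma to extract a nonzero weak limit and then a Br\'ezis--Lieb splitting of $N(f_n)=N(f)+N(f_n-f)+o(1)$ followed by an explicit algebraic inequality, $(1-m^2)^{1/2}(1-t^2)^{\frac{3}{2(\alpha+1)}}+mt^{\frac{3}{\alpha+1}}\geq1\Rightarrow m=t=1$, which plays the role of your strict-superadditivity step. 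Both work; the paper's version avoids the bookkeeping of spatially separated bumps entirely, which is convenient precisely because of the indefinite, non-integrable kernel.

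Two concrete slips in your write-up need repair. First, the normalization $\|\nabla f_n\|_{L^2}=\|f_n\|_{L^6}=1$ is not achievable: under $f\mapsto cf(\cdot/\mu)$ both $\|\nabla f\|_{L^2}^2$ and $\|f\|_{L^6}^2$ scale by the same factor $c^2\mu$, so their ratio (the Sobolev quotient) is invariant and cannot be adjusted. You must instead normalize $\|f_n\|_{L^2}=\|\nabla f_n\|_{L^2}=1$ as the paper does (those two norms scale differently), which also makes your subsequent worry about bounding $\|f_n\|_{L^2}$ moot, since $H^1$-boundedness is then automatic and $\|f_n\|_{L^6}\lesssim1$ follows from Sobolev. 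Second, $K(x)\sim|x|^{-3}$ is \emph{not} integrable away from the origin in $\R^3$ ($\int_{|x|\ge1}|x|^{-3}\,dx=\infty$), so the stated reason for the decay of cross terms is wrong; the correct argument for separated supports is the pointwise bound $|(K\ast|g_n|^2)(x)|\leq d_n^{-3}\|g_n\|_{L^2}^2$ on $\operatorname{supp}h_n$ when the supports are $d_n$-separated, or, better, the $L^4$-continuity of $N$ coming from $|\hat K|\leq\frac{8\pi}{3}$, which is what the paper (via the Br\'ezis--Lieb lemma for $N$, citing Antonelli--Sparber) actually uses. With these two corrections your argument closes.
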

\begin{proof}
First we show that $0<C_{\alpha}<\infty$. Indeed, we set
\begin{equation}\label{Nd}
N(f):=-\lambda_{1}\|f\|^{4}_{L^{4}}-\lambda_{2}\|(K\ast|f|^{2})|f|^{2}\|_{L^{1}}.
\end{equation}
Notice that Lemma \ref{Lne} ensure that the set $\B$ is not empty. Thus, $C_{\alpha}>0$. Also note
\[ 
N(f)\lesssim \|f\|^{4}_{L^{4}}
\lesssim \|f\|_{L^{2}}\|f\|^{3}_{L^{6}},
\]
then by the Sobovev embedding $\dot{H}^{1}(\R^{3})\hookrightarrow L^{6}(\R^{3})$ we infer
\[\begin{split}
\frac{\|f\|_{L^{2}}\|\nabla f\|^{\frac{3}{1+\alpha}}_{L^{2}}\|f\|^{\frac{3\alpha}{1+\alpha}}_{L^{6}}}{N(f)}
 \gtrsim \frac{\|f\|_{L^{2}}\|f\|^{3}_{L^{6}}}{N(f)}
\gtrsim 1.
\end{split}\]
This implies that $C_{\alpha}<\infty$.
Next we define the Weinstein functional
\[W(f):= \frac{\|f\|_{L^{2}}\|\nabla f\|^{\frac{3}{1+\alpha}}_{L^{2}}\|f\|_{L^{6}}^{\frac{3\alpha}{1+\alpha}}}{N(f)}.\]
Let $\left\{f_{n}\right\}_{n\in \N}$ be a minimizing sequence for the infimum $C_{\alpha}$. By exploiting 
the fact that $W(f_{q,s})=W(f)$, where $f_{q,s}(x):=qf(s x)$, we can rescale the sequence $\left\{f_{n}\right\}_{n\in \N}$
such that $\|f_{n}\|_{L^{2}}=1$ and  $\|\nabla f_{n}\|_{L^{2}}=1$.   Thus, $\left\{f_{n}\right\}_{n\in \N}$ is a bounded sequence in
$H^{1}(\R^{3})$. Since
\[W(f_{n})=\frac{\|f_{n}\|^{\frac{\alpha}{\alpha+1}}_{L^{2}}\|f_{n}\|^{\frac{3\alpha}{\alpha+1}}_{L^{6}}}{N(f_{n})}
\gtrsim
\frac{\|f_{n}\|^{\frac{4\alpha}{\alpha+1}}_{L^{4}}}{\|f_{n}\|^{4}_{L^{4}}}=\|f_{n}\|^{-\frac{4}{\alpha+1}}_{L^{4}},
\]
it follows that $\|f_{n}\|_{L^{4}}\gtrsim 1$.  Therefore,  we  infer  from Lieb's compactness lemma, that, after a translation if necessary, $\left\{f_{n}\right\}_{n\in \N}$  converges weakly in $H^{1}(\R^{3})$ and a.e. to a function $f\neq 0$. In particular, $\|f\|_{L^{2}}>0$, 
$\|f\|_{L^{4}}>0$, $\|f\|_{L^{6}}>0$ and $\|\nabla f\|_{L^{2}}>0$. Here we have used that 
$\|f\|^{4}_{L^{4}}\lesssim \|f\|_{L^{2}}\| \nabla f\|^{3}_{L^{2}}$. Now we put
\[\gamma=\lim_{n\rightarrow \infty}\|f_{n}\|_{L^{6}}, \quad \|f\|_{L^{2}}=m,  \quad \|\nabla f\|_{L^{2}}=t.\]
It is clear that $\gamma>0$. Indeed, $1\lesssim \|f_{n}\|^{4}_{L^{4}}\leq \|f_{n}\|^{3}_{L^{6}}$. Moreover,
$m$, $t\in (0,1]$ and 
\begin{equation}\label{Rp}
C^{-1}_{\alpha}=\gamma^{\frac{3\alpha}{\alpha+1}} \(\lim_{n\rightarrow \infty}N(f_{n})\)^{-1}.
\end{equation}
We  introduce  the  remainder $r_{n}:=f_{n}-f$. We have
\[\lim_{n\rightarrow \infty}\|r_{n}\|^{2}_{L^{2}}=1-m^{2}, \quad \lim_{n\rightarrow \infty}\|\nabla r_{n}\|^{2}_{L^{2}}=1-t^{2}.\]
The  weak convergence in $H^{1}(\R^{3})$ and Br\'ezis-Lieb lemma implies that (see \cite[p. 430]{AntonelliSparber2011})
\[\begin{split}
\lim_{n\rightarrow \infty}N(r_{n})&=\lim_{n\rightarrow \infty}[N(r_{n})-N(f_{n})]+\lim_{n\rightarrow \infty}N(f_{n})\\
&=-N(f)+\gamma^{\frac{3\alpha}{\alpha+1}}C_{\alpha}.
\end{split}\]
Thus,
\[C^{-1}_{\alpha}\leq \lim_{n\rightarrow \infty}W(r_{n})\leq 
\frac{(1-m^{2})^{\frac{1}{2}}(1-t^{2})^{\frac{3}{2}\( \frac{1}{\alpha+1}\)}\gamma^{\frac{3\alpha}{\alpha+1}}}{-N(f)+\gamma^{\frac{3\alpha}{\alpha+1}}C_{\alpha}}.\]
Now, since
\[C^{-1}_{\alpha}N(f)\leq m^{\frac{1}{2}}t^{\frac{3}{\alpha+1}}\|f\|_{L^{6}}^{\frac{3\alpha}{\alpha+1}}\leq m \,t^{\frac{3}{\alpha+1}}\gamma^{\frac{3\alpha}{\alpha+1}},\]
and $\gamma>0$ it follows that
\[(1-m^{2})^{\frac{1}{2}}(1-t^{2})^{\frac{1}{2}\( \frac{3}{\alpha+1}\)}+ m \,t^{\(\frac{3}{\alpha+1}\)}\geq 1.\]
Finally, as $\alpha>0$ and $m$, $t\in (0,1]$, it is not difficult to show that the inequality above implies that $m=1$ and $t=1$. Thus,
$\left\{f_{n}\right\}_{n\in \N}$ converges to  $f$ strongly in $H^{1}(\R^{3})$, which  means that $f$ is a minimizer.
This completes the proof of theorem.
\end{proof}

\begin{corollary}[Existence of standing waves]\label{Thecollo}
There exists at least one non-negative solution $Q\in H^{1}(\R^{3})$ of the problem \eqref{Ellip}. Moreover, 
if $\lambda_{2}<{0}$, then  $Q$ is (up to translation) axially symmetric with respect to the $x_{3}$-axis;
if $\lambda_{2}>{0}$, then  $Q$ is (up to translation) radially symmetric in the $(x_{1},x_{2})$-plane.
\end{corollary}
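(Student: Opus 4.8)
The plan is to obtain $Q$ as a rescaled minimizer of the Weinstein functional $W(f)=\|f\|_{L^{2}}\|\nabla f\|_{L^{2}}^{3/(1+\alpha)}\|f\|_{L^{6}}^{3\alpha/(1+\alpha)}/N(f)$ from Theorem~\ref{TheGN}, and to read off its symmetry from that of the minimizer. Fix any $\alpha>0$ and, by Theorem~\ref{TheGN}, pick $f\in\B$ realizing the infimum $C_{\alpha}^{-1}$. Since $N(|f|)=N(f)>0$, the $L^{p}$-norms are unchanged under $f\mapsto|f|$, and $\|\nabla|f|\|_{L^{2}}\le\|\nabla f\|_{L^{2}}$ (diamagnetic inequality), the function $|f|$ is again a minimizer, so we may assume $f\ge0$. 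Because $\B$ is open and $W\in C^{1}(\B)$, $f$ is a critical point of $W$; computing $\tfrac{d}{d\eps}W(f+\eps\phi)|_{\eps=0}=0$ for every real $\phi$ and setting $A=\|f\|_{L^{2}}^{2}$, $B=\|\nabla f\|_{L^{2}}^{2}$, $C=\|f\|_{L^{6}}^{6}$, $D=N(f)>0$, one obtains the semilinear elliptic equation
\begin{equation}\label{PropEL}
\frac{3}{(1+\alpha)B}(-\Delta f)+\frac{1}{A}f+\frac{3\alpha}{(1+\alpha)C}f^{5}+\frac{4}{D}\bigl(\lambda_{1}f^{3}+\lambda_{2}(K\ast f^{2})f\bigr)=0,
\end{equation}
whose four coefficients are strictly positive.

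Next I rescale. Set $Q(x):=\mu f(\nu x)$ with $\mu,\nu>0$. As $K$ is homogeneous of degree $-3$ one has $(K\ast|Q|^{2})(x)=\mu^{2}(K\ast f^{2})(\nu x)$, so in the equation for $Q$ coming from \eqref{PropEL} the terms $|Q|^{2}Q$ and $(K\ast|Q|^{2})Q$ carry the \emph{same} power of $\mu$ and of $\nu$, whereas $|Q|^{4}Q$ and $Q$ carry their own. Dividing by the coefficient of $-\Delta Q$, the requirements that the coefficients of $|Q|^{4}Q$ and of $\lambda_{1}|Q|^{2}Q$ both equal $1$ form two equations for $(\mu^{2},\nu^{2})$ with a unique positive solution (this uses $B,C,D>0$); then the coefficient of the dipolar term is automatically $\lambda_{2}$, and $Q$ solves \eqref{Ellip} with $\omega=\tfrac{3\alpha D^{2}}{16(1+\alpha)AC}>0$. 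Since $f\ge0$, $f\not\equiv0$, so is $Q$; elliptic regularity gives $Q\in H^{2}(\R^{3})\cap C^{2}(\R^{3})$, and the strong maximum principle upgrades $Q\ge0$ to $Q>0$. This proves the first assertion.

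For the symmetry one uses the invariance of $C_{\alpha}^{-1}$ and of the set of minimizers of $W$ under translations, under $x_{3}\mapsto-x_{3}$, and under rotations about the $x_{3}$-axis, together with the anisotropy of the dipole kernel of Lemma~\ref{LLk}. By Plancherel and $\widehat{K}(\xi)=\tfrac{4\pi\xi_{3}^{2}}{|\xi|^{2}}-\tfrac{4\pi}{3}=\tfrac{8\pi}{3}-\tfrac{4\pi(\xi_{1}^{2}+\xi_{2}^{2})}{|\xi|^{2}}$, writing $\rho=|f|^{2}$,
\begin{equation}\label{PropFourier}
\int_{\R^{3}}(K\ast\rho)\rho\,dx=-\frac{4\pi}{3}\|\rho\|_{L^{2}}^{2}+4\pi\int_{\R^{3}}\frac{\xi_{3}^{2}}{|\xi|^{2}}|\widehat{\rho}(\xi)|^{2}\frac{d\xi}{(2\pi)^{3}}=\frac{8\pi}{3}\|\rho\|_{L^{2}}^{2}-4\pi\int_{\R^{3}}\frac{\xi_{1}^{2}+\xi_{2}^{2}}{|\xi|^{2}}|\widehat{\rho}(\xi)|^{2}\frac{d\xi}{(2\pi)^{3}},
\end{equation}
so that, since $\|\rho\|_{L^{2}}^{2}=\|f\|_{L^{4}}^{4}$ is preserved by any rearrangement of $|f|$, the unstable-regime conditions \eqref{UR} make $N(f)$ equal to a positive multiple of $\|f\|_{L^{4}}^{4}$ plus a positive multiple of $-\int\frac{\xi_{3}^{2}}{|\xi|^{2}}|\widehat{\rho}|^{2}$ when $\lambda_{2}>0$, resp.\ of $-\int\frac{\xi_{1}^{2}+\xi_{2}^{2}}{|\xi|^{2}}|\widehat{\rho}|^{2}$ when $\lambda_{2}<0$. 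Following the symmetrization arguments for the dipolar NLS in \cite{CarlesMarkoSpaber2008,AntonelliSparber2011,BellazziniJeanjean2016,LuoSty2020}, which exploit precisely this anisotropy, a minimizer of $W$ may be chosen with the symmetry asserted --- radially symmetric in the $(x_{1},x_{2})$-variables when $\lambda_{2}>0$, and axially symmetric with respect to the $x_{3}$-axis when $\lambda_{2}<0$ --- the relevant rearrangement preserving all $L^{p}$-norms and not increasing $\|\nabla f\|_{L^{2}}$ (P\'olya--Szeg\H{o}); the leftover translation invariance yields the ``up to translation'' in the statement. Carrying out the rescaling above with such a minimizer produces $Q$ with the claimed symmetry. (Alternatively, one may minimize $W$ directly over the subspace of $\B$ fixed by the appropriate compact subgroup of the symmetry group of \eqref{Ellip} --- the infimum is still attained by the argument of Theorem~\ref{TheGN} restricted to that subspace --- and invoke the principle of symmetric criticality to see that such a constrained minimizer is a genuine critical point of $W$.)

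The main obstacle is the behaviour of the nonlocal dipolar functional $f\mapsto-\lambda_{2}\int_{\R^{3}}(K\ast|f|^{2})|f|^{2}\,dx$ under the symmetrization used above: the kernels $-(\partial_{1}^{2}+\partial_{2}^{2})|x|^{-1}$ and $-\partial_{3}^{2}|x|^{-1}$ occurring in \eqref{PropFourier} are neither sign-definite nor radially monotone, so the Riesz rearrangement inequality does not apply directly, and one must invoke the sharper rearrangement estimates available for these Calder\'on--Zygmund kernels (as in the references above); in the variational-subspace variant the analogous difficulty is to check that the concentration-compactness argument of Theorem~\ref{TheGN} localizes to the symmetry subspace. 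A secondary, routine point is that the one-directional / slicewise symmetrizations control the \emph{full} Dirichlet norm $\|\nabla f\|_{L^{2}}$, not merely a partial gradient, which is part of the standard theory of Steiner symmetrization.
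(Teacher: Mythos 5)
Your proposal is correct and follows essentially the same route as the paper: take a minimizer of the Weinstein functional from Theorem \ref{TheGN}, replace it by its absolute value using $\|\nabla|f|\|_{L^{2}}\leq\|\nabla f\|_{L^{2}}$, derive the Euler--Lagrange equation, and rescale $f\mapsto\mu f(\nu\cdot)$ (your $\mu^{2}$, $\nu^{2}$ and $\omega=\tfrac{3\alpha D^{2}}{16(1+\alpha)AC}$ agree with the paper's $a$, $b$ and $\omega$). For the symmetry assertion the paper likewise defers to the literature (citing \cite[Proposition 3.4(2)]{LuoSty2020}); your Fourier-side discussion of the anisotropy of $\widehat{K}$ is a reasonable sketch of what that reference does, so nothing essential is missing.
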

\begin{proof}
Let $f$ be the minimizer of the  Weinstein functional given in Theorem \ref{TheGN}. As $\|\nabla |f|\|^{2}_{L^{2}}\leq \|\nabla f\|^{2}_{L^{2}}$
we have $W(|f|)\leq W(f)$. This implies that $f(x)\geq 0$. Now, since $f$ is a minimizer, it follows that $f$ is a solution of the Euler-Lagrange equation 
\[\left.\frac{d}{d\epsilon}W(f+\epsilon\phi)\right|_{\epsilon=0}=0, \quad \mbox{for all $\phi\in C^{\infty}_{0}(\R^{3})$},\]
and so we have that $f$ satisfies the equation
\[\begin{split}
\frac{N^{\prime}(f)}{N(f)}=\|f\|^{-2}_{L^{2}}f+\frac{3\alpha}{\alpha+1}\|f\|^{-6}_{L^{6}}|f|^{4}f
+\frac{3}{\alpha+1}\|\nabla f\|^{-2}_{L^{2}}(-\Delta  f).
\end{split}\]
Since (see \cite[Lemma 3.1.]{AntonelliSparber2011})
\[N^{\prime}(f)=-4[\lambda_{1}|f|^{2}f+\lambda_{2}(K\ast|f|^{2})f]\]
we obtain
\[\begin{split}
-\Delta  f+\frac{4(\alpha+1)}{3}\frac{\lambda_{1}|f|^{2}f+\lambda_{2}(K\ast|f|^{2})f}{N(f)}+
\alpha\|\nabla f\|^{2}_{L^{2}}\|f\|^{-6}_{L^{6}}|f|^{4}f\\
+\( \frac{\alpha+1}{3}\)\|\nabla f\|^{2}_{L^{2}}\|f\|^{-2}_{L^{2}}f=0.
\end{split}\]
Next we rescale $f$ to a solution of the equation \eqref{Ellip}. Indeed, we set $Q(x)=af(bx)$, where
\[ a^{-2}=\frac{4(1+\alpha)}{3\alpha}\frac{\|f\|^{6}_{L^{6}}}{N(f)}\quad \mbox{and}\quad 
b^{-2}= \frac{16(1+\alpha)^{2}}{9\alpha}\frac{\|f\|^{2}_{L^{2}}\|f\|^{6}_{L^{6}}}{N(f)^{2}}.
\]
Then $Q$ is solution of 
\[-\Delta Q+\lambda_{1}Q^{3}
+\lambda_{1}(K\ast|Q|^{2})Q+Q^{5}+\omega Q=0, \quad \mbox{where} \quad  
\omega=\frac{3\alpha}{16(1+\alpha)}\frac{N(f)^{2}}{\|f\|^{2}_{L^{2}}\|f\|^{6}_{L^{6}}}.
\]
Finally, the symmetry assertions can be found in \cite[Proposition 3.4 (2)]{LuoSty2020}, and this completes the proof of corollary.
\end{proof}

\begin{corollary}[The sharp constant $C_{\alpha}$]\label{GNcoro}
Let $0<\alpha<\infty$. Then  the sharp constant $C_{\alpha}$ in the $\alpha$-Gagliardo-Nirenberg-H\"older inequality \eqref{SGNH}
is explicitly given by 
\begin{equation}\label{Opt}
C_{\alpha}=\frac{4(1+\alpha)}{3\alpha^{\frac{\alpha}{2(1+\alpha)}}}
\frac{\|\nabla Q_{\alpha}\|^{\frac{\alpha-1}{\alpha+1}}_{L^{2}}}{\|Q_{\alpha}\|_{L^{2}}},
\end{equation}
where $Q_{\alpha}$ is an optimizer of the problem \eqref{GNI} with 
${\|Q_{\alpha}\|^{6}_{L^{6}}}={\alpha}{\|\nabla Q_{\alpha}\|^{2}_{L^{2}}}$.
\end{corollary}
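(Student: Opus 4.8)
The goal is to derive the explicit formula \eqref{Opt} for $C_{\alpha}$ starting from the optimizer $f$ produced in Theorem \ref{TheGN} and the rescaling $Q(x) = af(bx)$ constructed in the proof of Corollary \ref{Thecollo}, which solves the elliptic equation \eqref{Ellip} with a specific $\omega$.

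\medskip

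\textbf{Plan of proof.} First I would record the scaling behaviour of all the quantities entering the Weinstein functional $W$. Since $W(qf(s\cdot)) = W(f)$ for all $q,s>0$, the value $C_\alpha^{-1} = W(f) = W(Q_\alpha)$ is unchanged when we pass from the minimizer $f$ to any of its rescalings; in particular we are free to normalize the optimizer as we wish. The natural normalization here, dictated by the Pohozaev-type identities, is to choose the representative $Q_\alpha$ among the orbit $\{af(b\cdot)\}$ that satisfies $\|Q_\alpha\|_{L^6}^6 = \alpha \|\nabla Q_\alpha\|_{L^2}^2$; that such a choice exists and is consistent with solving \eqref{Ellip} follows from the explicit $a,b$ in the proof of Corollary \ref{Thecollo} together with the identities \eqref{N11} (with $\varphi = Q_\alpha$), which give $N(Q_\alpha) = 4\omega\|Q_\alpha\|_{L^2}^2$ and $\|\nabla Q_\alpha\|_{L^2}^2 + \|Q_\alpha\|_{L^6}^6 = 3\omega\|Q_\alpha\|_{L^2}^2$. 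Under the constraint $\|Q_\alpha\|_{L^6}^6 = \alpha\|\nabla Q_\alpha\|_{L^2}^2$, the second identity yields $(1+\alpha)\|\nabla Q_\alpha\|_{L^2}^2 = 3\omega\|Q_\alpha\|_{L^2}^2$, hence $\omega = \tfrac{1+\alpha}{3}\|\nabla Q_\alpha\|_{L^2}^2 / \|Q_\alpha\|_{L^2}^2$, and then $N(Q_\alpha) = 4\omega\|Q_\alpha\|_{L^2}^2 = \tfrac{4(1+\alpha)}{3}\|\nabla Q_\alpha\|_{L^2}^2$.

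\medskip

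Next I would simply evaluate $W$ at this normalized optimizer. By definition
\[
C_\alpha^{-1} = W(Q_\alpha) = \frac{\|Q_\alpha\|_{L^2}\,\|\nabla Q_\alpha\|_{L^2}^{\frac{3}{1+\alpha}}\,\|Q_\alpha\|_{L^6}^{\frac{3\alpha}{1+\alpha}}}{N(Q_\alpha)}.
\]
Substituting $\|Q_\alpha\|_{L^6}^6 = \alpha\|\nabla Q_\alpha\|_{L^2}^2$, i.e. $\|Q_\alpha\|_{L^6} = \alpha^{1/6}\|\nabla Q_\alpha\|_{L^2}^{1/3}$, gives $\|Q_\alpha\|_{L^6}^{\frac{3\alpha}{1+\alpha}} = \alpha^{\frac{\alpha}{2(1+\alpha)}}\|\nabla Q_\alpha\|_{L^2}^{\frac{\alpha}{1+\alpha}}$, so the numerator becomes $\alpha^{\frac{\alpha}{2(1+\alpha)}}\|Q_\alpha\|_{L^2}\,\|\nabla Q_\alpha\|_{L^2}^{\frac{3+\alpha}{1+\alpha}}$. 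Dividing by $N(Q_\alpha) = \tfrac{4(1+\alpha)}{3}\|\nabla Q_\alpha\|_{L^2}^2$ and collecting the powers of $\|\nabla Q_\alpha\|_{L^2}$ (namely $\tfrac{3+\alpha}{1+\alpha} - 2 = \tfrac{1-\alpha}{1+\alpha}$), I obtain
\[
C_\alpha^{-1} = \frac{3\,\alpha^{\frac{\alpha}{2(1+\alpha)}}}{4(1+\alpha)}\,\frac{\|Q_\alpha\|_{L^2}}{\|\nabla Q_\alpha\|_{L^2}^{\frac{\alpha-1}{\alpha+1}}},
\]
and taking reciprocals gives exactly \eqref{Opt}. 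Comparing with \eqref{apropiate} confirms the two displayed forms of $C_\alpha$ agree.

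\medskip

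\textbf{Main obstacle.} The computation itself is routine bookkeeping of exponents; the only genuine point requiring care is the \emph{consistency of the normalization}: one must check that the specific optimizer $Q_\alpha$ obtained by the rescaling $Q_\alpha(x) = af(bx)$ (with $a,b$ as in Corollary \ref{Thecollo}) indeed both solves \eqref{Ellip} \emph{and} satisfies the algebraic relation $\|Q_\alpha\|_{L^6}^6 = \alpha\|\nabla Q_\alpha\|_{L^2}^2$, so that the Pohozaev identities \eqref{N11} may be legitimately applied to it. This follows because any $H^1$ solution of \eqref{Ellip} satisfies \eqref{N11}, and a short scaling computation with the explicit $a,b$ shows the rescaled function realizes the prescribed ratio; once this is in place, the formula for $C_\alpha$ is forced. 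I would therefore present the proof as: (i) apply \eqref{N11} to $Q_\alpha$; (ii) use the constraint to solve for $\omega$ and $N(Q_\alpha)$ in terms of $\|\nabla Q_\alpha\|_{L^2}$ and $\|Q_\alpha\|_{L^2}$; (iii) substitute into $W(Q_\alpha) = C_\alpha^{-1}$ and simplify.
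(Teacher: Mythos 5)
Your argument is correct and follows essentially the same route as the paper: apply the Pohozaev identities \eqref{N11} to the rescaled optimizer solving \eqref{Ellip} to get $N(Q_{\alpha})=\tfrac{4(1+\alpha)}{3}\|\nabla Q_{\alpha}\|_{L^{2}}^{2}$, and then evaluate the scale-invariant Weinstein functional at the representative with $\|Q_{\alpha}\|_{L^{6}}^{6}=\alpha\|\nabla Q_{\alpha}\|_{L^{2}}^{2}$. The only cosmetic difference is ordering — the paper first writes $C_{\alpha}$ in terms of a general ratio $\Gamma(f)$ via \eqref{Ne0} and then rescales so that $\Gamma=\alpha$, while you normalize first — and the consistency point you flag (that the \eqref{Ellip}-solution from Corollary \ref{Thecollo} indeed has $\Gamma=\alpha$) does check out from the explicit $a,b$.
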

\begin{proof}
Let $f$ be an optimizer of the problem \eqref{GNI}. We put $\Gamma(f):=\frac{\|f\|^{6}_{L^{6}}}{\|\nabla f\|^{2}_{L^{2}}}$. 
By  Pohozaev identities \eqref{Pho1}-\eqref{Pho23} we have 
\begin{equation}\label{Ne0}
N(f)=\frac{4}{3}(1+\Gamma(f))\|\nabla f\|^{2}_{L^{2}}.
\end{equation}
Since ${\|f\|^{6}_{L^{6}}}=\Gamma(f){\|\nabla f\|^{2}_{L^{2}}}$,  it follows that
\[\begin{split}
C_{\alpha}=\frac{1}{W(f)}=\frac{N(f)}{\|f\|_{L^{2}}\|\nabla f\|^{\frac{3}{1+\alpha}}_{L^{2}}\|f\|_{L^{6}}^{\frac{3\alpha}{1+\alpha}}}\\
=\frac{4(1+\Gamma(f))}{3\Gamma(f)^{\frac{\alpha}{2(1+\alpha)}}}
\frac{\|\nabla f\|^{\frac{\alpha-1}{\alpha+1}}_{L^{2}}}{\|f\|_{L^{2}}}.
\end{split}\]
It is easy to show that there exist $a_{0}$, $b_{0}>0$ such that $\Gamma(a_{0}f(b_{0}\,x))=\alpha$. We set $Q_{\alpha}(x):=a_{0}f(b_{0}\,x)$. By using the fact that the Weinstein functional $W$ is invariant under the scaling $f(x)\mapsto af(b\,x)$, we see that
$Q_{\alpha}$ is an optimizer of \eqref{GNI} and 
\[\begin{split}
C_{\alpha}
=\frac{4(1+\alpha)}{3\alpha^{\frac{\alpha}{2(1+\alpha)}}}
\frac{\|\nabla Q_{\alpha}\|^{\frac{\alpha-1}{\alpha+1}}_{L^{2}}}{\|Q_{\alpha}\|_{L^{2}}},
\end{split}\]
which finishes the proof.
\end{proof}

\begin{remark}\label{Ez0}
We observe that $E(Q_{1})=0$. Indeed, since (see  \eqref{Ne0}) $N(Q_{1})=\frac{8}{3}\|\nabla Q_{1}\|^{2}_{L^{2}}$
and $\| Q_{1}\|^{6}_{L^{6}}=\|\nabla Q_{1}\|^{2}_{L^{2}}$, it follows that
\[E(Q_{1})=\frac{1}{2}\|\nabla Q_{1}\|^{2}_{L^{2}}-\frac{1}{4}N(Q_{1})+\frac{1}{6}\| Q_{1}\|^{6}_{L^{6}}=0.\]
\end{remark}

\section{Description of the region of scattering}\label{SC1}
In this section we  study some properties of the set $\K$ defined by \eqref{DefK}.
Through the rest of the paper $Q_{1}$ is the optimizer of the problem \eqref{GNI} given in Corollary \ref{Thecollo} with $\alpha=1$. Moreover,
we set $S(x):=\frac{1}{\sqrt{2}}Q_{1}(\frac{\sqrt{3}}{2}x)$.

\subsection*{Variational analysis}
We consider the infimum function  $d:[0,\infty)\rightarrow \R$,
\begin{equation}\label{VpG}
d({m}):=\inf\left\{E(u):\,\, u\in H^{1}(\R^{3}),\,\, M(u)=m  \right\},
\end{equation}
and we define the number
\[m_{\ast}:=\sup\left\{m>0: d({m})=0\right\}.\]

In the following result, we will study some properties of the variational problem \eqref{VpG}.
\begin{proposition}\label{EGE}
Let $\lambda_{1}$, $\lambda_{2}$ belong to the unstable regime \eqref{UR}.\\
{\rm (i)} If $0\leq m\leq M(Q_{1})$, then $d({m})=0$.\\
{\rm (ii)} If $m> M(Q_{1})$, then $d({m})<0$. In particular, $m_{\ast}=M(Q_{1})$. \\
{\rm (iii)} The function $d:[0,\infty)\rightarrow \R$  is  non-increasing,  non-positive, concave and thus continuous on $[0,\infty)$.
Finally, if $m\geq M(Q_{1})$, then the variational problem \eqref{VpG} is well-defined and there exists $v\in H^{1}(\R^{3})$ such that $d({m})=E(v)$. Moreover, $v$ satisfies $I(v)=0$.
\end{proposition}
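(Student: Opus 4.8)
The plan is to treat the three items in order, exploiting the scaling structure of the energy and the sharp Gagliardo--Nirenberg--H\"older inequality \eqref{SGNH} with $\alpha=1$. For item (i), the key observation is that along the scaling $u^{\sigma}(x) := \sigma^{1/2} u(\sigma x)$ the mass $M(u^\sigma)=M(u)$ is preserved, while $E(u^\sigma)$ degenerates as $\sigma\to 0$: indeed $\|\nabla u^\sigma\|_{L^2}^2 = \sigma^2\|\nabla u\|_{L^2}^2$, $\|u^\sigma\|_{L^6}^6 = \sigma^2\|u\|_{L^6}^6$, while $-\lambda_1\|u^\sigma\|_{L^4}^4 - \lambda_2\|(K\ast|u^\sigma|^2)|u^\sigma|^2\|_{L^1} = \sigma\,N(u)$ scales only linearly (the dipole term scales like the cubic term since $K$ is homogeneous of degree $-3$, cf. Lemma \ref{LLk}). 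Hence $E(u^\sigma) = \tfrac12\sigma^2\|\nabla u\|_{L^2}^2 - \tfrac14\sigma N(u) + \tfrac16\sigma^2\|u\|_{L^6}^6 \to 0$ as $\sigma\to 0$, so $d(m)\le 0$ for every $m\ge 0$. For the reverse inequality $d(m)\ge 0$ when $m\le M(Q_1)$, I would use the sharp inequality \eqref{SGNH} with $\alpha=1$: for any $u$ with $M(u)=m$,
\[
N(u) \le C_1 \|u\|_{L^2}\,\|\nabla u\|_{L^2}^{3/2}\,\|u\|_{L^6}^{3/2},
\]
and then estimate $E(u)\ge \tfrac12\|\nabla u\|_{L^2}^2 + \tfrac16\|u\|_{L^6}^6 - \tfrac14 C_1 m \|\nabla u\|_{L^2}^{3/2}\|u\|_{L^6}^{3/2}$. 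Using Young's inequality $\|\nabla u\|_{L^2}^{3/2}\|u\|_{L^6}^{3/2}\le \tfrac12(\|\nabla u\|_{L^2}^2\cdot a + \|u\|_{L^6}^6\cdot a^{-?})$ — more precisely, writing $X=\|\nabla u\|_{L^2}^2$, $Y=\|u\|_{L^6}^6$, one has $X^{3/4}Y^{1/4}\cdot X^{3/4}$... — the cleanest route is: $\|\nabla u\|_{L^2}^{3/2}\|u\|_{L^6}^{3/2} \le C\, X^{?}Y^{?}$ with exponents summing correctly so that the right-hand side is controlled by $\tfrac12 X + \tfrac16 Y$ precisely when $C_1 m$ is below the threshold; and the value of $C_1$ from \eqref{Opt} together with $\|Q_1\|_{L^6}^6 = \|\nabla Q_1\|_{L^2}^2$ and Remark \ref{Ez0} ($E(Q_1)=0$) pins the threshold at exactly $m = M(Q_1)$. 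Thus $d(m)=0$ on $[0,M(Q_1)]$, with the infimum approached but (generically) not attained there.

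For item (ii), when $m>M(Q_1)$ I would exhibit a test function with negative energy by rescaling $Q_1$ in mass: set $u_\mu = \mu Q_1$ with $\mu>1$ chosen so $M(u_\mu) = \mu^2 M(Q_1)$ can be tuned, or more flexibly combine the amplitude scaling with the spatial scaling $u^\sigma_\mu(x) = \mu\sigma^{1/2}Q_1(\sigma x)$, which has mass $\mu^2 M(Q_1)$ and energy $E(u^\sigma_\mu) = \tfrac12\mu^2\sigma^2\|\nabla Q_1\|_{L^2}^2 - \tfrac14\mu^4\sigma N(Q_1) + \tfrac16\mu^6\sigma^2\|Q_1\|_{L^6}^6$. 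Since the (negative, because of the unstable regime) cubic/dipole term scales as $\mu^4\sigma$ while the positive terms scale as $\mu^2\sigma^2$ and $\mu^6\sigma^2$, for fixed $\mu>1$ and $\sigma$ small one has $E(u^\sigma_\mu)<0$ once $\mu^4\sigma N(Q_1)$ dominates, i.e. for $\sigma$ small the $\mu^2\sigma^2$ and $\mu^6\sigma^2$ terms are negligible; choosing $\mu$ with $\mu^2 M(Q_1) = m$ gives $d(m)<0$. (One checks $N(Q_1) = \tfrac83\|\nabla Q_1\|_{L^2}^2>0$ from Remark \ref{Ez0}.) This also gives $m_\ast = M(Q_1)$: $d=0$ on $[0,M(Q_1)]$ by (i) and $d<0$ beyond.

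For item (iii), monotonicity (non-increasing) follows because enlarging the mass constraint only enlarges the admissible set after a suitable rescaling — more carefully, given $v$ with $M(v)=m$ and $m'>m$, one builds a competitor for $d(m')$ by adding a small, far-away bump (or by amplitude rescaling in a spatial region) whose contribution to the energy is negative or arbitrarily small, using the same degeneration as in (i); since $d\le 0$ always (item (i)'s upper bound argument works for all $m$), combined with $d<0$ for $m>M(Q_1)$, one gets $d$ non-positive and non-increasing. Concavity I would obtain from the scaling $v\mapsto v(\cdot/\theta)$-type argument or directly: for $m_0,m_1$ and $v_j$ near-optimal, the function $\theta\mapsto E(\sqrt{\theta}\,v)$ is concave-friendly because... actually the standard trick is that $d$ is an infimum of a family of concave (in fact affine-after-reparametrization) functions of $m$; alternatively, concavity of $m\mapsto d(m)$ on $[0,\infty)$ follows from the subadditivity-type inequality $d(m_0+m_1)\le d(m_0)+d(m_1)$ combined with $d(0)=0$ and homogeneity considerations — I expect this to require the binding-type argument, gluing two well-separated near-minimizers. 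Continuity on $[0,\infty)$ is then automatic from concavity (finite concave functions are continuous on the interior, and one checks continuity at $0$ directly from $d(0)=0$ and $d\le 0$). Finally, for the attainment statement when $m\ge M(Q_1)$: for $m>M(Q_1)$, $d(m)<0$, so any minimizing sequence $\{u_n\}$ has $N(u_n)$ bounded below away from $0$ (else $E(u_n)\ge 0$); by \eqref{SGNH} this forces $\|u_n\|_{H^1}$ bounded, and then the concentration-compactness / Lieb compactness lemma (as in the proof of Theorem \ref{TheGN}) applied along the subadditivity strict inequality $d(m) < d(m_1) + d(m-m_1)$ for $0<m_1<m$ (which holds because $d<0$ and $d$ is concave, non-constant) rules out dichotomy and vanishing, giving a minimizer $v$ up to translation. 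The Euler--Lagrange equation for $v$ then reads $-\Delta v + \lambda_1|v|^2 v + \lambda_2(K\ast|v|^2)v + |v|^4 v + \theta v = 0$ for a Lagrange multiplier $\theta$, and the Pohozaev/virial identity obtained by pairing with $x\cdot\nabla v$ (exactly as in Lemma \ref{Phozine}, equation \eqref{Pho23}) combined with pairing against $\bar v$ yields $I(v)=0$. The case $m=M(Q_1)$ is handled by noting $d(M(Q_1))=0=E(Q_1)$ (Remark \ref{Ez0}) and $I(Q_1)=\|\nabla Q_1\|_{L^2}^2+\|Q_1\|_{L^6}^6 - \tfrac34 N(Q_1) = \|\nabla Q_1\|_{L^2}^2 + \|\nabla Q_1\|_{L^2}^2 - 2\|\nabla Q_1\|_{L^2}^2 = 0$, so $v=Q_1$ works.

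\medskip

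The main obstacle I anticipate is the attainment step and the accompanying \emph{strict} subadditivity $d(m)<d(m_1)+d(m-m_1)$: since $d$ vanishes identically on $[0,M(Q_1)]$, strict subadditivity fails there, which is exactly why no minimizer exists for $m<M(Q_1)$; one must argue carefully that it holds for $m\ge M(Q_1)$, using that $d$ is strictly negative and strictly decreasing there (the scaling argument of item (ii) shows $d(m)<d(M(Q_1))=0$ and a quantitative version gives strict monotonicity), and then run the concentration-compactness dichotomy to extract the minimizer while handling the dipole nonlinearity's nonlocality via Lemma \ref{LLk} and the Brezis--Lieb splitting for $N$ used in the proof of Theorem \ref{TheGN}. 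The verification that the threshold in item (i) is sharp at precisely $M(Q_1)$ — i.e. optimizing the lower bound $E(u)\ge \tfrac12 X + \tfrac16 Y - \tfrac14 C_1 m\, X^{3/4}Y^{1/4}\cdot(\text{correct power})$ and matching it with the explicit $C_1$ from \eqref{Opt} and the identities $\|Q_1\|_{L^6}^6=\|\nabla Q_1\|_{L^2}^2$, $N(Q_1)=\tfrac83\|\nabla Q_1\|_{L^2}^2$ — is a finite but somewhat delicate computation that I would carry out explicitly.
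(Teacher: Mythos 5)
Your overall strategy for item (i) (sharp Gagliardo--Nirenberg plus Young's inequality, with the threshold pinned by $E(Q_1)=0$ and $\|Q_1\|_{L^6}^6=\|\nabla Q_1\|_{L^2}^2$) is the same as the paper's, and that part would close once you carry out the Young step: with $X=\|\nabla u\|_{L^2}^2$, $Y=\|u\|_{L^6}^6$ one has $X^{3/4}Y^{1/4}\le\frac34X+\frac14Y$, and $\frac{8}{3}\cdot\frac14\bigl(\frac{m}{M(Q_1)}\bigr)^{1/2}\bigl(\frac34X+\frac14Y\bigr)=\bigl(\frac{m}{M(Q_1)}\bigr)^{1/2}\bigl(\frac12X+\frac16Y\bigr)$, which is exactly \eqref{IIN}. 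The genuine gap is in your scaling computations, and it is not cosmetic. The map $u^{\sigma}(x)=\sigma^{1/2}u(\sigma x)$ does \emph{not} preserve mass in $\R^3$ (it multiplies $M$ by $\sigma^{-2}$); the mass-preserving scaling is $v_s(x)=s^{3/2}v(sx)$, for which $\|\nabla v_s\|_{L^2}^2=s^2\|\nabla v\|_{L^2}^2$, $N(v_s)=s^3N(v)$ and $\|v_s\|_{L^6}^6=s^6\|v\|_{L^6}^6$, so the attractive term is \emph{higher} order than the kinetic term as $s\to0$, not lower order as you claim. This matters: if your formula $E(u^\sigma)=\frac12\sigma^2\|\nabla u\|_{L^2}^2-\frac14\sigma N(u)+\frac16\sigma^2\|u\|_{L^6}^6$ were correct, then together with Lemma \ref{Lne} it would give $d(m)<0$ for \emph{every} $m>0$, contradicting item (i) itself. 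The same error propagates into your item (ii): the function $\mu\sigma^{1/2}Q_1(\sigma x)$ has mass $\mu^2\sigma^{-2}M(Q_1)$, not $\mu^2M(Q_1)$, and its kinetic and $L^6$ terms are $\sigma$-independent, so the claimed energy expansion is false (and, taken at face value, would again produce negative energy at mass $M(Q_1)$ by setting $\mu=1$). The correct mechanism, which the paper uses, is the $\dot H^1$-invariant rescaling $Q_1^s(x)=s^{-1/2}Q_1(s^{-1}x)$ with $s>1$: it leaves $\|\nabla\cdot\|_{L^2}$ and $\|\cdot\|_{L^6}$ unchanged, multiplies the mass by $s^2$ and $N$ by $s$, whence $E(Q_1^s)=E(Q_1)-\frac{s-1}{4}N(Q_1)<0$. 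The whole point of the threshold $M(Q_1)$ is that negative energy can only be bought by \emph{increasing the mass}; your scalings obscure exactly this.

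For item (iii) your proposal is a program rather than a proof: strict subadditivity, the concentration-compactness dichotomy for the nonlocal term, concavity, and the Euler--Lagrange/Pohozaev derivation of $I(v)=0$ are all flagged but not executed. For comparison, the paper proves monotonicity directly (take $u$ with $M(u)=m_1$, $N(u)>0$ from Lemma \ref{Lne} and apply the same $\dot H^1$-invariant rescaling to raise the mass while lowering the energy) and defers continuity, concavity, existence of minimizers for $m>m_\ast$, and $I(v)=0$ to \cite[Lemmas~8.2--8.3, Theorem~3.3(2), Proposition~4.1]{LuoSty2020}; the case $m=M(Q_1)$ is settled by noting that $Q_1$ itself is a minimizer with $I(Q_1)=0$. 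So the attainment step you identify as the main obstacle is real, but in this paper it is imported rather than reproved; the substantive errors to fix in your write-up are the scaling identities.
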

\begin{proof}
Notice that $d({m})=0$ when $m=0$. Now it is easy to see that $d({m})\leq 0$ for $m>0$. Indeed, consider
the scaled functions $v_{s}(x):=s^{\frac{3}{2}}v(sx)$. Then $M(v_{s})=M(v)$ and
\[E(v_{s})=\frac{s^{2}}{2}\int_{\R^{3}}|v|+s^{3}\frac{\lambda_{1}}{4}|v|^{4}
+s^{3}\frac{\lambda_{2}}{4}(K\ast|v|^{2})|v|^{2}dx+\frac{s^{6}}{6}|v|^{6}dx\rightarrow 0
\]
as $s\rightarrow0$. This show that $d(m)\leq 0$. On the other hand, by Gagliardo-Nirenberg inequality \eqref{SGNH} with $\alpha=1$ we have
\[-\lambda_{1}\|u\|^{4}_{L^{4}}-\lambda_{2}\|(K\ast|u|^{2})|u|^{2}\|_{L^{1}}\leq 
\frac{8}{3}\( \frac{M(u)}{M(Q_{1})} \)^{\frac{1}{2}}\|\nabla u\|^{\frac{3}{2}}_{L^{2}}\|u\|^{\frac{3}{2}}_{L^{6}},  \]
and by young's inequality we find
\begin{equation}\label{IIN}
\begin{split}
E(u)\geq \frac{1}{2}\|\nabla u\|^{2}_{L^{2}}+\frac{1}{6}\|u\|^{6}_{L^{6}}-\frac{2}{3}
\( \frac{M(u)}{M(Q_{1})} \)^{\frac{1}{2}}\|\nabla u\|^{\frac{3}{2}}_{L^{2}}\|u\|^{\frac{3}{2}}_{L^{6}}\\
\geq \left[1-\( \frac{M(u)}{M(Q_{1})} \)^{\frac{1}{2}} \right]\left[\frac{1}{2}\|\nabla u\|^{2}_{L^{2}}+\frac{1}{6}\|u\|^{6}_{L^{6}}\right].
\end{split}
\end{equation}
Then for any $u\in H^{1}(\R^{3})$ with $0\leq m\leq M(Q_{1})$ we have $E(u)\geq 0$. Hence Item  {\rm (i)} is true. 
We now prove Item (ii). First, we note that $E(Q_{1})=0$ (see Remark \ref{Ez0}), which implies by \eqref{IIN} that $d(M(Q_{1}))=E(Q_{1})=0$. 
Next, assume that $m>M(Q_{1})$. We set $Q^{s}_{1}(x)=s^{-\frac{1}{2}}Q_{1}(s^{-1}x)$, where $s^{2}=m/M(Q_{1})$. It is clear that
$s>1$, $M(Q^{s}_{1})=m$ and 
\[E(Q^{s}_{1})=E(Q_{1})-\( s-1\)N(Q_{1}).\]
Since $N(Q_{1})>0$ and $E(Q_{1})=0$, we infer that $d(m)<0$ when $m>M(Q_{1})$. In particular, $d(m)$ is non-positive for all $m\geq 0$.
Finally, let $m_{1}<m_{2}$. Since $\lambda_{1}$, $\lambda_{2}$ belong to the unstable regime \eqref{UR}, we infer that there exists $u\in H^{1}(\R^{3})$ such that $M(u)=m_{1}$ and $N(u)>0$ (see Lemma \ref{Lne}). We define $u^{s}(x):=s^{-\frac{1}{2}}u(s^{-1}x)$ with $s^{2}:=m_{1}/m_{2}$. Therefore, we have
$M(u^{s})=m_{2}$ and
\[E(u^{s})=E(u)-\( s-1\)N(u)<E(u).\]
This implies that $d(m_{1})\leq d(m_{2})$.  One can find a proof of continuity and concavity of the 
function $d$ in \cite[Lemmas 8.2 and 8.3]{LuoSty2020}. Finally, Theorem 3.3 (2) in \cite{LuoSty2020}  establishes the existence of at least one  minimizer
for the variational problem \eqref{VpG}. Indeed, in Theorem 3.3 (2) of that paper it is shown that \eqref{VpG} has at least one  minimizer
when $m>m_{\ast}$. Thus, by Item (ii) we infer that the infimum $d(m)$ is achieved for all $m\geq M(Q_{1})$. Moreover, 
by Proposition 4.1 in \cite{LuoSty2020}
we see that $I(v)=0$, which completes the proof of proposition.
\end{proof}

We recall the following minimization problem defined in the introduction:
\begin{equation}\label{VVp}
\EE(m):=\inf\left\{E(u): u\in H^{1}(\R^{3}),\, M(u)=m \,\, \mbox{and}\,\, I(u)=0 \right\}. 
\end{equation}
By definition, $\EE(m)=\infty$ when the set $\left\{M(u)=m \,\, \mbox{and}\,\, I(u)=0 \right\}$ is empty.
In the following result, we prove some properties of function $\EE(m)$.  We recall that $S(x):=\frac{1}{\sqrt{2}}Q_{1}(\frac{\sqrt{3}}{2}x)$.
\begin{theorem}\label{SK}
Let $(M(u), E(u))\in \K$. Then the following statements are true.\\
(i) $I(u)>0$, where the functional $I$ is defined in \eqref{Virial}.\\
(ii) If $0<m<M(S)$, then  $\EE(m)=\infty$.\\
(iii) If $M(S)\leq m<M(Q_{1})$, then  $0<\EE(m)<\infty$. \\
(iv)  If $m\geq M(Q_{1})$ then $\EE(m)=d(m)$. In particular, $\EE(M(Q_{1}))=0$.\\
Moreover,  the function $\EE$ is  strictly decreasing on the interval $[M(S), M(Q_{1})]$. 
\end{theorem}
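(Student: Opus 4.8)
The plan is to reduce the whole statement to two elementary ingredients: a rescaling lemma and the sharp $\alpha=1$ case of \eqref{SGNH}. For $v\in H^{1}(\R^{3})\setminus\{0\}$ write $v^{\theta}(x)=\theta^{3/2}v(\theta x)$, so that $M(v^{\theta})=M(v)$, $I(v^{\theta})=\theta^{2}\|\nabla v\|_{L^{2}}^{2}+\theta^{6}\|v\|_{L^{6}}^{6}-\tfrac34\theta^{3}N(v)$, and $\tfrac{d}{d\theta}E(v^{\theta})=\tfrac1\theta I(v^{\theta})$. The key observation $(\star)$ is: if $I(v)<0$ then $N(v)\ge\tfrac43(\|\nabla v\|_{L^{2}}^{2}+\|v\|_{L^{6}}^{6})>0$; the map $\theta\mapsto\theta^{-2}I(v^{\theta})=\|\nabla v\|_{L^{2}}^{2}+\theta^{4}\|v\|_{L^{6}}^{6}-\tfrac34\theta N(v)$ is strictly convex on $(0,\infty)$ (since $\|v\|_{L^{6}}^{6}>0$), negative at $\theta=1$, and $\to+\infty$ as $\theta\to\infty$, hence it has a single zero $\theta^{\ast}>1$ with $I(v^{\theta})<0$ on $[1,\theta^{\ast})$; therefore $g(\theta)=E(v^{\theta})$ is strictly decreasing on $[1,\theta^{\ast}]$, so $E(v^{\theta^{\ast}})<E(v)$ while $I(v^{\theta^{\ast}})=0$ and $M(v^{\theta^{\ast}})=M(v)$. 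A second routine computation records that on $\{I=0\}$ one has $N(u)=\tfrac43(\|\nabla u\|_{L^{2}}^{2}+\|u\|_{L^{6}}^{6})$ and hence $E(u)=\tfrac16(\|\nabla u\|_{L^{2}}^{2}-\|u\|_{L^{6}}^{6})$.

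Part (i) is then immediate: if $(M(u),E(u))\in\K$ (so $u\ne0$ since $E(u)>0$) and $I(u)\le0$, then either $I(u)=0$, in which case $u$ is admissible for \eqref{VVp} and $E(u)\ge\EE(M(u))$, or $I(u)<0$, in which case $(\star)$ produces $u^{\theta^{\ast}}$ of the same mass with $I=0$ and strictly smaller energy, so $\EE(M(u))\le E(u^{\theta^{\ast}})<E(u)$; both contradict $E(u)<\EE(M(u))$. For (ii), feed the sharp inequality \eqref{SGNH} with $\alpha=1$ (where $C_{1}=\tfrac83 M(Q_{1})^{-1/2}$) into the identity above: if $M(u)=m$, $I(u)=0$, $u\ne0$, then with $P=\|\nabla u\|_{L^{2}}^{2}$, $R=\|u\|_{L^{6}}^{6}$, $s=R/P$ one gets $\tfrac{m}{M(Q_{1})}\ge\phi(s):=\tfrac{(1+s)^{2}}{4\sqrt s}$; since $\phi$ is continuous on $(0,\infty)$, blows up at both ends, attains its minimum $\tfrac{4}{3\sqrt3}$ at $s=\tfrac13$, and $\tfrac{4}{3\sqrt3}M(Q_{1})=M(S)$ by a direct scaling computation on $S=2^{-1/2}Q_{1}(\tfrac{\sqrt3}{2}\,\cdot\,)$, this forces $m\ge M(S)$, so $\EE(m)=\infty$ for $m<M(S)$. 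Part (iv) is soft: $\EE(m)\ge d(m)$ since \eqref{VVp} minimizes over a subset of the competitors for \eqref{VpG}, while for $m\ge M(Q_{1})$ Proposition \ref{EGE}(iii) provides a minimizer of $d(m)$ with $I=0$, which is admissible for \eqref{VVp}; hence $\EE(m)=d(m)$, and $\EE(M(Q_{1}))=d(M(Q_{1}))=0$ by Proposition \ref{EGE}(i).

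Part (iii) and the monotonicity are where I expect the real work. Finiteness of $\EE(m)$ for $m\in[M(S),M(Q_{1}))$ follows from the dilate $S(\sigma\,\cdot\,)$ with $\sigma=(M(S)/m)^{1/3}\le1$, which has mass $m$ and $I=\sigma^{-3}\|\nabla S\|_{L^{2}}^{2}(\sigma^{2}-1)\le0$ (using $I(S)=0$), combined with $(\star)$ when the inequality is strict. Positivity is delicate: for $m\ge M(S)$ a minimizing sequence for $\EE(m)$ need not be bounded in $\dot H^{1}$, so instead I argue directly with the identities. If $\EE(m)=0$ for some $m<M(Q_{1})$, take a minimizing sequence $u_{n}$; then $E(u_{n})=\tfrac16 P_{n}(1-s_{n})\to0$ and $\phi(s_{n})\le\tfrac{m}{M(Q_{1})}<1$, so $(s_{n})$ sits in a fixed compact subset of $(0,\infty)$ and $s_{n}\le1$. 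If $s_{n}\not\to1$ then $P_{n}\to0$ along a subsequence, and the Sobolev bound $R_{n}\lesssim P_{n}^{3}$ gives $s_{n}\to0$, contradicting compactness; hence $s_{n}\to1$, whence $\phi(s_{n})\to1$ and $m\ge M(Q_{1})$, a contradiction. For strict monotonicity on $[M(S),M(Q_{1})]$: the case $m_{2}=M(Q_{1})$ is covered by $\EE(M(Q_{1}))=0<\EE(m_{1})$ from (iii)--(iv); for $M(S)\le m_{1}<m_{2}<M(Q_{1})$, take a minimizing sequence $u_{n}$ for $\EE(m_{1})$, set $w_{n}(x)=s^{-1/2}u_{n}(s^{-1}x)$ with $s=(m_{2}/m_{1})^{1/2}\in(1,\sqrt2)$ (since $m_{2}/m_{1}<M(Q_{1})/M(S)=\tfrac{3\sqrt3}{4}<2$), so that $M(w_{n})=m_{2}$, $I(w_{n})=\tfrac34 N(u_{n})(1-s)<0$, and $E(w_{n})=E(u_{n})-\tfrac14(s-1)N(u_{n})\le(3-2s)E(u_{n})$ because $N(u_{n})=8E(u_{n})+\tfrac83 R_{n}\ge8E(u_{n})$; applying $(\star)$ to $w_{n}$ and letting $n\to\infty$ gives $\EE(m_{2})\le(3-2s)\EE(m_{1})<\EE(m_{1})$ since $0<3-2s<1$.

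The main obstacle is thus the positivity statement in (iii) together with the monotonicity: the natural variational/concentration--compactness route stalls because minimizing sequences for $\EE(m)$ carry no a priori kinetic bound once $m\ge M(S)$, and getting around this relies on the precise identity $E(u)=\tfrac16(\|\nabla u\|_{L^{2}}^{2}-\|u\|_{L^{6}}^{6})$ on $\{I=0\}$, on the sharp constant in \eqref{SGNH} for $\alpha=1$, and on the exact bookkeeping that matches the minimum of $\phi$ with the ratio $M(S)/M(Q_{1})$.
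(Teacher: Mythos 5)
Your proof is correct, and it rests on the same two pillars as the paper's argument: the dilation $v\mapsto\theta^{3/2}v(\theta\cdot)$ (your $(\star)$ is the paper's Lemma \ref{LG1}(i), minus the $\Gamma\ge\tfrac13$ bookkeeping, which you never need) and the sharp $\alpha=1$ inequality \eqref{SGNH}. Parts (i), (ii) and (iv) essentially coincide with the printed argument; your $\phi(s)=(1+s)^{2}/(4\sqrt{s})$ computation in (ii) is a sharper repackaging of the paper's Young-inequality step, with the same matching of $\min\phi=\tfrac{4}{3\sqrt3}$ against $M(S)/M(Q_{1})$. The genuine divergences are in (iii) and in the monotonicity. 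For positivity the paper proves the quantitative bound $E(u)\gtrsim\bigl[1-(M(u)/M(Q_{1}))^{1/2}\bigr]M(u)^{-1}$ on $\left\{I=0,\ M(u)=m\right\}$, using that $\|\nabla u\|_{L^{2}}\|u\|_{L^{2}}\gtrsim1$ there; you instead argue by contradiction through the identity $E=\tfrac16 P(1-s)$ and the compactness of $\left\{s:\phi(s)\le m/M(Q_{1})\right\}$. (One sentence worth making explicit: $\phi(s)\le m/M(Q_{1})<1$ forces $s<1$, hence $E\ge0$ for \emph{every} competitor, so excluding $\EE(m)=0$ really does yield $\EE(m)>0$; this is implicit in your remark that $s_{n}\le1$.) For strict monotonicity the paper invokes Lemma \ref{L53}, i.e.\ the two-parameter rescaling $\sqrt{\tau_{0}\sigma}\,u(\sigma x)$ that raises the mass while keeping $I=0$ and lowers the energy by $\tfrac{m-M(u)}{6M(u)}\|\nabla u\|_{L^{2}}^{2}$, combined with a uniform kinetic lower bound along minimizing sequences; you instead raise the mass with the one-parameter scaling $s^{-1/2}u(s^{-1}x)$, which destroys $I=0$ in the favourable direction $I<0$, and then repair with $(\star)$, obtaining the multiplicative decrease $\EE(m_{2})\le(3-2s)\EE(m_{1})$ from the identity $N=8E+\tfrac83 R\ge 8E$ on $\left\{I=0\right\}$. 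Your route buys a shorter proof that bypasses Lemma \ref{L53} altogether; the paper's buys an additive, kinetic-energy-weighted decrement, though nothing downstream of the theorem uses more than strictness.
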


\begin{remark}\label{ExisGS}
By applying a similar argument as in \cite{LuoSty2020, BellazziniJeanjean2016}, one should prove that the minimization problem
\eqref{VVp} is achieved when $M(S)\leq m< M(Q_{1})$. 
Observe that the existence of minizers for \eqref{VVp} plays no role in the proof of the scattering result of Theorem \ref{TheS}.
\end{remark}

In order to prove the Theorem \ref{SK} we need the following lemmas.

\begin{lemma}\label{LG1}
Let $u\in H^{1}(\R^{3})\setminus\left\{0\right\}$. Assume that either \\
(i) $I(u)<0$ or \\
(ii) $I(u)=0$ and $\Gamma(u)<\frac{1}{3}$, where $\Gamma(u)=\frac{\|u\|^{6}_{L^{6}}}{\|\nabla u\|^{2}_{L^{2}}}$.\\
Then there exists $s>1$ such that $I(u^{s})=0$,  $\Gamma(u^{s})\geq\frac{1}{3}$ and 
$E(u^{s})<E(u)$, where $u^{s}(x):=s^{\frac{3}{2}}u(sx)$. 
\end{lemma}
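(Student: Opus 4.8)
The plan is to reduce all three conclusions to an elementary one–variable problem along the rescaling $s\mapsto u^{s}$. First I would record the scaling dictionary: since $u^{s}(x)=s^{3/2}u(sx)$ preserves the $L^{2}$–norm and the kernel $K$ is homogeneous of degree $-3$ (so that $(K\ast|u^{s}|^{2})(x)=s^{3}(K\ast|u|^{2})(sx)$), one computes
\[
\|\nabla u^{s}\|_{L^{2}}^{2}=s^{2}\|\nabla u\|_{L^{2}}^{2},\qquad \|u^{s}\|_{L^{6}}^{6}=s^{6}\|u\|_{L^{6}}^{6},\qquad N(u^{s})=s^{3}N(u).
\]
Writing $a:=\|\nabla u\|_{L^{2}}^{2}$, $b:=N(u)$ and $c:=\|u\|_{L^{6}}^{6}$ (with $a,c>0$ because $u\not\equiv 0$), this gives $E(u^{s})=\tfrac12 s^{2}a-\tfrac14 s^{3}b+\tfrac16 s^{6}c$, $I(u^{s})=s^{2}a+s^{6}c-\tfrac34 s^{3}b$ and $\Gamma(u^{s})=s^{4}c/a$. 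The structural remark I would use throughout is that $\tfrac{d}{ds}E(u^{s})=\tfrac1s I(u^{s})$.

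Next I would note that in \emph{both} cases (i) and (ii) one necessarily has $b=N(u)>0$, since $I(u)\le 0$ forces $\tfrac34 b\ge a+c>0$. With $b>0$ fixed, I would analyze $p(s):=s^{-2}I(u^{s})=a+s^{4}c-\tfrac34 sb$ on $(0,\infty)$: since $p(0^{+})=a>0$, $p(s)\to+\infty$, and $p'(s)=4s^{3}c-\tfrac34 b$ vanishes only at $s_{0}:=\left(\tfrac{3b}{16c}\right)^{1/3}$, the function $p$ is strictly decreasing on $(0,s_{0})$ and strictly increasing on $(s_{0},\infty)$, hence has a unique global minimum at $s_{0}$. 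The claim to establish here is $p(s_{0})<0$. In case (i) this is immediate from $p(s_{0})\le p(1)=I(u)<0$. In case (ii) one has $p(1)=I(u)=0$, so $p(s_{0})\le 0$, and equality $p(s_{0})=0$ would force $1=s_{0}$ and $p'(1)=0$; combined with $p(1)=0$ (equivalently $\tfrac34 b=a+c$) this yields $3c=a$, i.e.\ $\Gamma(u)=\tfrac13$, contradicting the hypothesis $\Gamma(u)<\tfrac13$. Hence $p$ has exactly two zeros $0<s_{-}<s_{0}<s_{+}$, and $p<0$ precisely on $(s_{-},s_{+})$.

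Then I would locate $s=1$ and take $s:=s_{+}$. In case (i), $p(1)<0$ puts $1\in(s_{-},s_{+})$. In case (ii), $p(1)=0$ gives $1\in\{s_{-},s_{+}\}$; since $p$ is strictly increasing past $s_{0}$ one has $p'(s_{+})>0$, whereas $p'(1)=4c-\tfrac34 b=3c-a<0$ because $\Gamma(u)<\tfrac13$, so $1=s_{-}$. In either case $s_{+}>1$, and for $s=s_{+}$ I would verify the three assertions: (a) $I(u^{s_{+}})=s_{+}^{2}p(s_{+})=0$; (b) since $\tfrac{d}{d\tau}E(u^{\tau})=\tau p(\tau)<0$ on $(s_{-},s_{+})\supseteq(1,s_{+})$, integration gives $E(u^{s_{+}})<E(u^{1})=E(u)$; (c) combining $p(s_{+})=0$, i.e.\ $a+s_{+}^{4}c=\tfrac34 s_{+}b$, with $p'(s_{+})>0$, i.e.\ $\tfrac34 s_{+}b<4s_{+}^{4}c$, yields $a<3s_{+}^{4}c$, hence $\Gamma(u^{s_{+}})=s_{+}^{4}c/a>\tfrac13$.

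The argument is essentially elementary calculus once the scaling dictionary is in place; the points needing care are the homogeneity bookkeeping for the nonlocal term $N$, the fact that the \emph{same} parameter $s_{+}$ simultaneously delivers all three conclusions, and the degenerate sub-case of (ii) where one must invoke $\Gamma(u)<\tfrac13$ to exclude $p(s_{0})=0$. The pleasant mechanism behind the bound $\Gamma(u^{s_{+}})\ge\tfrac13$ is that at the \emph{larger} root of $I(u^{\cdot})$ the derivative $p'$ is automatically positive, and ``$p(s_{+})=0$ together with $p'(s_{+})>0$'' is exactly equivalent to ``$\Gamma(u^{s_{+}})>\tfrac13$''.
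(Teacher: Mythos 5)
Your proof is correct and follows essentially the same route as the paper: both exploit the identity $I(u^{s})=s\frac{d}{ds}E(u^{s})$, locate the larger root $s>1$ of $I(u^{s})=0$, and read off $\Gamma(u^{s})\geq\frac13$ from the sign of $\frac{d}{ds}I(u^{s})$ at that root. The only (cosmetic) difference is that you analyze the polynomial $p(s)=s^{-2}I(u^{s})$ globally and treat both cases at once, whereas the paper handles case (i) by a continuity argument and reduces case (ii) to it by showing $\frac{d}{ds}I(u^{s})|_{s=1}=(3\Gamma(u)-1)\|\nabla u\|_{L^{2}}^{2}<0$.
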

\begin{proof}
A simple calculation shows that
\begin{equation}\label{IE}
\begin{split}
I(u^{s})=s\frac{d}{ds}E(u^{s})=s^{2}\|\nabla u\|^{2}_{L^{2}}+
s^{3}\frac{3}{4}\lambda_{1}\|u\|^{4}_{L^{4}}\\
+s^{3}\frac{3}{4}\lambda_{2}\|(K\ast|u|^{2})|u|^{2}\|_{L^{1}}+
s^{6}\|u\|^{6}_{L^{6}}.
\end{split}
\end{equation}
It follows from \eqref{IE} that $I(u^{s})\rightarrow \infty$ as $s$ goes to $\infty$. If $I(u)<0$, 
then by continuity there exists $s_{0}>0$ such that   $I(u^{s_{0}})=0$ and $I(u^{s_{0}})<0$ 
for all $s\in [1, s_{0})$. Thus, by \eqref{IE} we have $E(u^{s_{0}})<E(u)$.
Now, since $\left.\frac{d}{ds}I(u^{s})\right|_{s=s_{0}}\geq 0$ and  $I(u^{s_{0}})=0$ we obtain 
\[
\begin{split}
0\leq& 2s_{0}\|\nabla u\|^{2}_{L^{2}}+s_{0}^{2}\frac{9}{4}\lambda_{1}\|u\|^{4}_{L^{4}}
+s_{0}^{2}\frac{9}{4}\lambda_{2}\|(K\ast|u|^{2})|u|^{2}\|_{L^{1}}+
6s_{0}^{5}\|u\|^{6}_{L^{6}}\\
=&-s_{0}\|\nabla u\|^{2}_{L^{2}}+3s_{0}^{5}\|u\|^{6}_{L^{6}}.
\end{split}
\]
This implies that $\Gamma(u^{s_{0}})=s^{4}_{0}\Gamma(u)\geq \frac{1}{3}$. Thus, we complete the proof of (i).
Now we assume that $I(u)=0$ and $\Gamma(u)<\frac{1}{3}$. Then, as $I(u)=0$, it follow that 
\[-\frac{3}{4}\lambda_{1}\|u\|^{4}_{L^{4}}- \frac{3}{4}\lambda_{2}\|(K\ast|u|^{2})|u|^{2}\|_{L^{1}}=(1+\Gamma(u))\|\nabla u\|^{2}_{L^{2}}.\]
Therefore, by \eqref{IE} we get 
\begin{equation*}
I(u^{s})=s^{2}(1-s)(1-\Gamma(u)s-\Gamma(u)s^{2}-\Gamma(u)s^{3})\|\nabla u\|^{2}_{L^{2}}.
\end{equation*}
Notice that $\left.\frac{d}{ds}I(u^{s})\right|_{s=1}=(3\Gamma(u)-1)\|\nabla u\|^{2}_{L^{2}}$. Since $\Gamma(u)<\frac{1}{3}$, we have $\left.\frac{d}{ds}I(u^{s})\right|_{s=1}<0$. It follows that $I(u^{s})<0$ and $E(u^{s})<E(u)$
for sufficient small $s>1$. Therefore, by (i) we infer that there exists $s_{1}>1$ such that $I(u^{s_{1}})=0$,  $\Gamma(u^{s_{1}})\geq\frac{1}{3}$ and $E(u^{s_{1}})<E(u)$.
\end{proof}

\begin{lemma}\label{L53}
Let $m>0$. If $u\in H^{1}(\R^{3})$  satisfies $0<M(u)<m$ and $I(u)=0$, then there exists $v\in H^{1}(\R^{3})$ such that
\begin{equation}\label{Ax1}
\begin{split}
M(v)=m,\quad E(v)\leq E(u)-\( \frac{m-M(u)}{6M(u)}\)\|\nabla u\|^{2}_{L^{2}}\quad \mbox{and}\quad I(v)=0.
\end{split}
\end{equation}
\end{lemma}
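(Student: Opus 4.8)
The plan is to produce $v$ as a two‑parameter dilation of $u$, chosen to sit on the constraint set $\{M=m,\ I=0\}$, and then to control the energy along the resulting curve. Write $\Gamma(w):=\|w\|_{L^{6}}^{6}/\|\nabla w\|_{L^{2}}^{2}$. I would first make a reduction: if $\Gamma(u)<\tfrac13$, apply Lemma~\ref{LG1}(ii) to replace $u$ by $u^{s}(x)=s^{3/2}u(sx)$ for the value $s>1$ produced there; this keeps $M$ fixed, keeps $I=0$, lowers $E$, and increases $\|\nabla\cdot\|_{L^{2}}^{2}$, so the target inequality only becomes easier and it suffices to prove the lemma under the extra hypothesis $\Gamma(u)\ge\tfrac13$.

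For $s,\sigma>0$ set $v_{s,\sigma}(x):=\sigma^{3/2}s^{-1/2}u(\sigma s^{-1}x)$ — that is, the mass‑raising dilation $u(x)\mapsto s^{-1/2}u(s^{-1}x)$ used in the proof of Proposition~\ref{EGE}, followed by the virial dilation $w(x)\mapsto\sigma^{3/2}w(\sigma x)$ of Lemma~\ref{LG1}. Since $K$ is homogeneous of degree $-3$, the two quartic terms $\|u\|_{L^{4}}^{4}$ and $\|(K\ast|u|^{2})|u|^{2}\|_{L^{1}}$ transform identically under $u(x)\mapsto au(bx)$, so they can be carried together through $N(\cdot)$. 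With $G:=\|\nabla u\|_{L^{2}}^{2}$, $H:=\|u\|_{L^{6}}^{6}$, $\Gamma:=H/G\ (\ge\tfrac13)$, one finds $M(v_{s,\sigma})=s^{2}M(u)$ for every $\sigma$; the hypothesis $I(u)=0$ gives algebraically $N(u)=\tfrac43(G+H)$, whence
\[ I(v_{s,\sigma})=\sigma^{2}G\bigl(1+\sigma^{4}\Gamma-s\sigma(1+\Gamma)\bigr). \]
Fix $s:=\sqrt{m/M(u)}>1$, so that $M(v_{s,\sigma})=m$. Because $\Gamma\ge\tfrac13$, the map $\sigma\mapsto\tfrac{1+\sigma^{4}\Gamma}{\sigma(1+\Gamma)}$ is strictly increasing on $[1,\infty)$, equals $1$ at $\sigma=1$, and tends to $\infty$; hence there is a unique $\sigma=\sigma(\mu)$ with $\sigma(1)=1$, $\sigma(\mu)>1$ for $\mu:=m/M(u)>1$, making $I(v_{s,\sigma(\mu)})=0$. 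Put $v:=v_{s,\sigma(\mu)}$; then $M(v)=m$ and $I(v)=0$.

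Finally, the energy estimate. Using $I(v)=0$ once more, a short computation gives $E(v)=\tfrac16(\sigma^{2}G-\sigma^{6}H)=\tfrac{G}{6}\sigma^{2}(1-\sigma^{4}\Gamma)=:g(\mu)$, with $g(1)=E(u)=\tfrac{G}{6}(1-\Gamma)$. Differentiating the defining relation $\sqrt{\mu}=\tfrac{1+\sigma^{4}\Gamma}{\sigma(1+\Gamma)}$ in $\mu$ and simplifying yields
\[ g'(\mu)=-\frac{G}{6}\cdot\frac{\sigma^{3}(1+\Gamma)}{\sqrt{\mu}}=-\frac{G}{6}\cdot\frac{\sigma^{4}(1+\Gamma)^{2}}{1+\sigma^{4}\Gamma}. \]
Since $\sigma\ge1$, we have $\sigma^{4}(1+\Gamma)^{2}-(1+\sigma^{4}\Gamma)=\sigma^{4}(1+\Gamma+\Gamma^{2})-1\ge0$, so $g'(\mu)\le-\tfrac{G}{6}$ for all $\mu\ge1$; integrating from $1$ to $\mu$ gives
\[ E(v)=g(\mu)\le E(u)-\frac{G}{6}(\mu-1)=E(u)-\Bigl(\frac{m-M(u)}{6M(u)}\Bigr)\|\nabla u\|_{L^{2}}^{2}, \]
which is the claim (and, tracing through the reduction, even with strict inequality relative to the original $u$). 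The scaling identities and the passage from $I=0$ to the closed forms for $N$ and $E$ are routine; the two points that need care are that the relevant dilation map is monotone on $[1,\infty)$ precisely when $\Gamma(u)\ge\tfrac13$ — which is exactly why the preliminary reduction is made — and the sharp elementary inequality $\sigma^{4}(1+\Gamma+\Gamma^{2})\ge1$, which is what forces the energy to drop at rate at least $G/6$ and hence reproduces the exact constant in the statement.
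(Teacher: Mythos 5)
Your proof is correct and follows essentially the same route as the paper: the same reduction to $\Gamma(u)\ge\tfrac13$ via Lemma \ref{LG1}, and the same two-parameter rescaling (your $v_{s,\sigma}$ coincides with the paper's $v(x)=\sqrt{\tau_{0}\sigma}\,u(\sigma x)$ under the substitution $\tau_{0}=\sigma^{2}$, $f(\tau_{0})=\mu$). The only cosmetic difference is that you obtain the energy drop from the differential inequality $g'(\mu)\le -G/6$, whereas the paper integrates the equivalent bound $f'(\tau)\le 3\Gamma(u)\tau^{2}-1$ to get \eqref{Axv2}.
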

\begin{proof}
We adapt here the proof given in \cite[Lemma 5.4]{KillipOhPoVi2017}. By Lemma \ref{LG1}, we may assume that $\Gamma(u)\geq \frac{1}{3}$.
Consider the function 
\[ f(\tau)=\frac{(1+\tau^{2}\Gamma(u))^{2}}{\tau (1+\Gamma(u))^{2}},\quad \tau>1.
\]
As $ f^{\prime}(\tau)>0$,  there exist a unique $\tau_{0}>1$ such that $m=f(\tau_{0})M(u)$. Also, since $f^{\prime}(\tau)\leq 3\Gamma(u)\tau^{2}-1$ for all $\tau>1$, it is not difficult to show that
\begin{equation}\label{Axv2}
\frac{m-M(u)}{M(u)}\leq \Gamma(u)(\tau_{0}^{3}-1)-(\tau_{0}-1).
\end{equation}
Now we set $v(x):=\sqrt{\tau_{0}\sigma}u(\sigma x)$ with $\sigma=\sqrt{\tau_{0}[f(\tau_{0})]^{-1}}$. It follows that
\begin{equation}\label{Fcs}
\begin{split}
\|\nabla v\|^{2}_{L^{2}}&=\tau_{0}\|\nabla u\|^{2}_{L^{2}}, \quad \| v\|^{4}_{L^{4}}=\tau^{2}_{0}\sigma^{-1}\| u\|^{4}_{L^{4}},\quad 
\| v\|^{2}_{L^{2}}=\tau_{0}\sigma^{-2}\| u\|^{2}_{L^{2}},
\\
\| v\|^{6}_{L^{6}}&=\tau^{3}_{0}\| u\|^{6}_{L^{6}}, \quad \|(K\ast|v|^{2})|v|^{2}\|_{L^{1}}=\tau^{2}_{0}\sigma^{-1}\|(K\ast|u|^{2})|u|^{2}\|_{L^{1}}.
\end{split}
\end{equation}
Thus, $M(v)=m$. Moreover, since $I(u)=0$, from straightforward calculations and 
\eqref{Fcs} we obtain that $I(v)=0$. Finally, by using the fact that 
\[\begin{split}
\frac{3}{4}\left\{\lambda_{1}\| u\|^{4}_{L^{4}}+\lambda_{2}\|(K\ast|u|^{2})|u|^{2}\|_{L^{1}}\right\}=-(1+\Gamma(u))\|\nabla u\|^{2}_{L^{2}} \quad \mbox{and}\\
\| u\|^{6}_{L^{6}}=\Gamma(u)\|\nabla u\|^{2}_{L^{2}},
\end{split}\]
 from \eqref{Fcs} and \eqref{Axv2} we have
\[\begin{split}
E(v)&=E(u)-\frac{1}{6}[\Gamma(u)(\tau_{0}^{3}-1)-(\tau_{0}-1)]\|\nabla u\|^{2}_{L^{2}}\\
    &\leq E(u)-\(\frac{m-M(u)}{M(u)}\)\|\nabla u\|^{2}_{L^{2}}.
\end{split}\]
This completes the proof of lemma.
\end{proof}

\begin{proof}[{Proof of Theorem \ref{SK}}]
Consider $u\in H^{1}(\R^{3})$ such that $(M(u), E(u))\in \K$.
First we prove that $I(u)>0$. Note that, by definition of $\K$, $I(u)\neq 0$.
Assume by contradiction that $I(u)<0$. By Lemma \ref{LG1}, we can see that there is $s>1$
such that  $M(u^{s})=M(u)$, $I(u^{s})=0$ and $E(u^{s})<E(u)$. Then, from the definition $\EE(m)$ we have 
$\EE(m)<E(u)$, which is impossible because $(M(u), E(u))\in \K$. This proves the part (i) of theorem.\\
Now assume that $0<m<M(S)$. We set
\begin{equation}\label{Sp1}
\varphi^{a,b}(x):=aQ_{1}(bx), \quad a>0,\quad b>0, 
\end{equation}
which implies
\begin{equation}\label{F12}
\begin{split}
\|\nabla \varphi^{a,b}\|^{2}_{L^{2}}&=a^{2}b^{-1}\|\nabla Q_{1}\|^{2}_{L^{2}}, 
\quad \| \varphi^{a,b}\|^{4}_{L^{4}}=a^{4}b^{-3}\sigma^{-1}\| Q_{1}\|^{4}_{L^{4}},\quad 
\|\varphi^{a,b}\|^{2}_{L^{2}}=a^{2}b^{-3}\| Q_{1}\|^{2}_{L^{2}},\\
\| \varphi^{a,b}\|^{6}_{L^{6}}&=a^{6}b^{-3}\| Q_{1}\|^{6}_{L^{6}}, 
\quad \|(K\ast|\varphi^{a,b}|^{2})|\varphi^{a,b}|^{2}\|_{L^{1}}=a^{4}b^{-3}\|(K\ast|Q_{1}|^{2})|Q_{1}|^{2}\|_{L^{1}}.
\end{split}
\end{equation}
Since $\Gamma(Q_{1})=1$, we obtain that 
\[\Gamma(\varphi^{a,b})=\frac{a^{4}}{b^{2}}\Gamma(Q_{1})=\frac{a^{4}}{b^{2}}.\]
Similarly, as $I(Q_{1})=0$, it follows from straightforward calculation and \eqref{F12} that
\[I(\varphi^{a,b})=\(\frac{a^{2}}{b}+ \frac{a^{6}}{b^{3}}-2\frac{a^{4}}{b^{3}}\)\|\nabla Q_{1}\|^{2}_{L^{2}}.\]
In particular, we see that
\[M(S)=\frac{4}{3\sqrt{3}}M(Q_{1}), 
\quad \Gamma(S)=\frac{1}{3},
\quad I(S)=0.
\]
On the other hand, as the Weinstein functional is invariant by scaling \eqref{Sp1}, we infer that that $S$ is an optimizer for the $(\alpha=1)$-Gagliardo-Nirenberg-H\"older inequality \eqref{SGNH}. Therefore we have
\[\begin{split}
C_{1}&=\frac{-\lambda_{1}\|S\|^{4}_{L^{4}}-\lambda_{2}\|(S\ast|S|^{2})|S|^{2}\|_{L^{1}}}{\|S\|_{L^{2}}\|\nabla S\|^{\frac{3}{1+\alpha}}_{L^{2}}\|S\|^{\frac{3\alpha}{1+\alpha}}_{L^{6}}}\\
&=\frac{4^{2}}{3^{2}} \frac{\|\nabla S\|^{2}_{L^{2}} 3^{\frac{1}{4}}}{\|S\|_{L^{2}}\|\nabla S\|^{\frac{1}{2}}_{L^{2}}\|\nabla S\|^{\frac{3}{2}}_{L^{2}} }\\
&= \frac{4^{2}}{3^{2}}\(\frac{3^{\frac{1}{4}}}{\|S\|_{L^{2}}}\). 
\end{split}\]
Here we have used the fact that $\|S\|^{6}_{L^{6}}=3^{-1}\|\nabla S\|^{2}_{L^{2}}$ and
\[\begin{split}
-\lambda_{1}\|S\|^{4}_{L^{4}}-\lambda_{2}\|(S\ast|S|^{2})|S|^{2}\|_{L^{1}}
=\frac{4}{3}\left\{\|S\|^{6}_{L^{6}}+\|\nabla S\|^{2}_{L^{2}} \right\}
=\frac{4^{2}}{3^{3}}\|\nabla S\|^{2}_{L^{2}}.
\end{split}\]
Combining the $(\alpha=1)$-Gagliardo-Nirenberg-H\"older inequality \eqref{SGNH} and the Young inequality we get
\[\begin{split}
-\lambda_{1}\|u\|^{4}_{L^{4}}-\lambda_{2}\|(u\ast|u|^{2})|u|^{2}\|_{L^{1}}&\leq
\frac{4^{2}}{3^{2}}\frac{\|u\|_{L^{2}}}{\|S\|_{L^{2}}}\(3\|u\|^{6}_{L^{6}}\)^{\frac{1}{4}}\(\|\nabla u\|^{2}_{L^{2}}\)^{\frac{3}{4}}\\
&\leq \frac{4}{3}\frac{\|u\|_{L^{2}}}{\|S\|_{L^{2}}}[\|\nabla u\|^{2}_{L^{2}}+\|u\|^{6}_{L^{6}}].
\end{split}\]
Thus, if $0<M(u)<M(S)$, then $I(u)>0$. This implies that no function obeys the constrain $\left\{u\in H^{1}(\R^{3}),\, M(u)=m \,\, \mbox{and}\,\, I(u)=0\right\}$ and therefore $\EE(m)=\infty$. This proves the part (ii) of theorem.\\
Now assume that $M(S)\leq m<M(Q_{1})$. Note that if $I(u)=0$, then 
\[\|\nabla u\|^{2}_{L^{2}}\leq -\lambda_{1}\|u\|^{4}_{L^{4}}-\lambda_{2}\|(u\ast|u|^{2})|u|^{2}\|_{L^{1}} 
\lesssim  \|\nabla u\|^{3}_{L^{2}}\| u\|_{L^{2}},  
\]
this implies that $\|\nabla u\|_{L^{2}}\| u\|_{L^{2}}\gtrsim 1$. Moreover, from \eqref{Opt}, 
we can rewrite the $(\alpha=1)$-Gagliardo-Nirenberg-H\"older inequality  as
\[-\lambda_{1}\|u\|^{4}_{L^{4}}-\lambda_{2}\|(u\ast|u|^{2})|u|^{2}\|_{L^{1}} 
\leq \frac{8}{3} \(\frac{M(u)}{M(Q_{1})}\)^{\frac{1}{2}}\|\nabla u\|^{\frac{3}{2}}_{L^{2}}\| u\|^{\frac{3}{2}}_{L^{6}}.
\]
Combining these inequalities, we get 
\[\begin{split}
E(u)&\geq \frac{1}{2}\|\nabla v\|^{2}_{L^{2}}+\frac{1}{6}\|v\|^{6}_{L^{6}}-
\frac{1}{2}\(\frac{M(u)}{M(Q_{1}}\)^{\frac{1}{2}}\|\nabla u\|^{\frac{3}{2}}_{L^{2}}\| u\|^{\frac{3}{2}}_{L^{6}}\\
&\geq \left[1-\(\frac{M(u)}{M(Q_{1})}\)^{\frac{1}{2}}\right]\left\{\frac{1}{2}\|\nabla v\|^{2}_{L^{2}}+\frac{1}{6}\|v\|^{6}_{L^{6}}\right\}\\
& \gtrsim\left[1-\(\frac{M(u)}{M(Q_{1})}\)^{\frac{1}{2}}\right][M(u)]^{-1}.
\end{split}\]
Therefore, if $M(S)\leq M(u)<M(Q_{1})$ and $I(u)=0$, then $\EE(M(u))>0$. On the other hand, 
thanks to Lemma \ref{L53} with $u=S$, we infer that $\EE(m)\leq E(S)<\infty$ when $M(S)\leq M(u)<M(Q_1)$.\\
Now we will that $\EE(m)$ is strictly decreasing on the interval $[M(S), M(Q_{1}))$. Indeed, let $m_{{2}}<m_{1}$ with $m_{{2}}$, $
m_{1}\in [M(S), M(Q_{1}))$  and  $\left\{u_{n}\right\}_{n\in \N}$ be a minimizing sequence for $\EE(m_{2})$. Thus, $M(u_{n})=m_{2}$,
$I(u_{n})=0$ and $E(u_{n})\rightarrow \EE(m_{2})$. As $I(u_{n})=0$, by applying the same argument as above we have $\|\nabla u_{n}\|^{1}_{L^{2}}\geq C/m_{2}$, where the constant $C>0$ does not depend on $n$. Therefore, by Lemma \ref{L53} we obtain a sequence $\left\{v_{n}\right\}_{n\in \N}$ such that
$M(v_{n})=m_{1}$, $I(v_{n})=0$ and 
\[E(v_{n})\leq E(u_{n})-c\frac{m_{1}-m_{2}}{6m^{2}_{2}},\]
which implies, by the definition of  $\EE(m)$ that $\EE(m_{1})<\EE(m_{2})$.  We also note that $\EE(m)=d(m)$ when  $m\geq M(Q_{1})$. Indeed, this is an immediate consequence of Proposition \ref{EGE} (iii). This implies by Proposition \ref{EGE} that the function $\EE$,
is decreasing and continuous on $[M(Q_{1}),\infty)$.
\end{proof}

\section{Perturbation theory and profile decomposition property}\label{S:Tes} 
First we introduce some useful preliminaries in the spirit of the results of 
\cite[Subsection 5.2]{KillipOhPoVi2017}.
We define the continuous map $\L: H^{1}(\R^{3})\rightarrow [0,\infty)$,
\[\L(u):=\L(M(u), E(u)):=
\begin{cases} 
E(u)+\frac{M(u)+E(u)}{\mbox{dist}\((M(u),E(u)),\Omega\)}, \quad \mbox{if} \quad (M(u),E(u))\notin \Omega\\
\infty, \quad \mbox{otherwise},
\end{cases} 
 \]
where 
\[\Omega:=\left\{(m,e)\in \R^{2}: m\geq M(S)\quad \mbox{and}\quad e\geq \EE(m)\right\}.\]
We note that $\L$ is conserved by the flow of \eqref{NLS}.
Next we collect some useful facts on $\L$.
\begin{lemma}\label{Lel}
The function $\L$ satisfies the following properties:\\
(i) $\L(u)=0$ if and only if $u\equiv 0$. Moreover, $0<\L(u)<\infty$ if and only if $(M(u),E(u))\in \K$.\\
(ii) If $0<\L(u)<\infty$, then $I(u)>0$.\\
(iii) If $M(u_{1})\leq M(u_{2})$ and $E(u_{1})\leq E(u_{2})$, then $\L(u_{1})\leq \L(u_{2})$.\\
(iv) Let $u\in H^{1}(\R^{3})$ with $\L(u)\leq \L_{0}$, where $\L_{0}\in (0, \infty)$. Then we have
\begin{equation}\label{Enl}
\|\nabla u\|^{2}_{{L}^{2}_{x}} \lesssim_{\L_{0}} E(u), \quad \mbox{and} \quad \|u\|^{2}_{{H}^{1}_{x}} \sim_{\L_{0}} 
E(u)+M(u)\sim_{\L_{0}} \L(u). 
\end{equation}
(v) If $M(u_{n})\leq M_{0}$, $E(u_{n})\leq E_{0}$, and $\L(u_{n})\rightarrow \L(M_{0}, E_{0})$, then 
$(M(u_{n}), E(u_{n}))\rightarrow (M_{0}, E_{0})$ as $n\rightarrow\infty$.
\end{lemma}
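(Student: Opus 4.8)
The plan is to verify the five assertions in order, since each relies on the structure of $\Omega$ (a closed "upper-right" set whose boundary on $[M(S),M(Q_1)]$ is the strictly decreasing curve $e=\EE(m)$, by Theorem \ref{SK}) and on the coercivity estimate \eqref{IIN}. For (i), I would first note that $(M(u),E(u))\notin\Omega$ is equivalent to $0<M(u)<M(Q_1)$ together with $0\le E(u)<\EE(M(u))$: indeed if $M(u)\ge M(Q_1)$ then $\EE(M(u))=d(M(u))\le 0\le E(u)$ by Proposition \ref{EGE}(ii) and Theorem \ref{SK}(iv), so $(M(u),E(u))\in\Omega$; and if $0<M(u)<M(S)$ then $\EE(M(u))=\infty$ so the constraint $e\ge\EE(m)$ is vacuous but in fact $E(u)\ge 0$ by \eqref{IIN} while $m<M(S)$ forces $(m,e)\notin\Omega$ for the reason that $\Omega$ requires $m\ge M(S)$; the remaining case $M(S)\le M(u)<M(Q_1)$ with $E(u)\ge 0$ gives $(m,e)\notin\Omega$ iff $E(u)<\EE(M(u))$. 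This identifies $\{(M(u),E(u))\notin\Omega\}$ with $\{(M(u),E(u))\in\K\}\cup\{u\equiv 0\}$, and since for $u\not\equiv 0$ one has $E(u)+M(u)>0$ and $\mathrm{dist}((M(u),E(u)),\Omega)>0$, we get $0<\L(u)<\infty$ exactly on $\K$, and $\L(u)=0$ iff $u\equiv 0$. Part (ii) is then immediate: $0<\L(u)<\infty$ means $(M(u),E(u))\in\K$, and Theorem \ref{SK}(i) gives $I(u)>0$.

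For (iii), the key geometric fact is that $\Omega$ is closed under increasing either coordinate: if $(m,e)\in\Omega$ and $m'\ge m$, $e'\ge e$ then $(m',e')\in\Omega$ because $\EE$ is non-increasing on $[M(S),\infty)$ (strictly decreasing on $[M(S),M(Q_1)]$ by Theorem \ref{SK} and equal to the non-increasing $d$ beyond, by Proposition \ref{EGE}(iii)), so $e'\ge e\ge\EE(m)\ge\EE(m')$. Consequently $\mathrm{dist}((m',e'),\Omega)\le\mathrm{dist}((m,e),\Omega)$ when $m\le m'$, $e\le e'$ lie outside $\Omega$ — this requires a short argument: moving the point toward $\Omega$ by increasing its coordinates can only shrink the distance, formally because $\Omega+[0,\infty)^2=\Omega$ so the distance function $(m,e)\mapsto\mathrm{dist}((m,e),\Omega)$ is non-increasing in each coordinate on the complement. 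Combined with $E(u_1)\le E(u_2)$ and $M(u_1)+E(u_1)\le M(u_2)+E(u_2)$, every term in the formula for $\L$ is monotone, giving $\L(u_1)\le\L(u_2)$ (and if $(M(u_2),E(u_2))\in\Omega$ there is nothing to prove since $\L(u_2)=\infty$).

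For (iv), given $\L(u)\le\L_0<\infty$ we have $(M(u),E(u))\in\K$, so in particular $0\le M(u)<M(Q_1)$ and $E(u)<\EE(M(u))\le E(S)$ when $M(u)\ge M(S)$, hence $E(u)$ and $M(u)$ are bounded by constants depending on $\L_0$ (the bound on $M(u)+E(u)$ follows from $\L(u)\ge\dfrac{M(u)+E(u)}{\mathrm{dist}((M(u),E(u)),\Omega)}$ together with the fact that $\mathrm{dist}(\cdot,\Omega)$ is bounded on the bounded region $[0,M(Q_1))\times[0,\infty)$ intersected with the complement of $\Omega$ — here one uses that $\L_0<\infty$ pins the point into a compact sub-region of $\K$ bounded away from nothing problematic). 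Then \eqref{IIN}, i.e.
\[
E(u)\ \ge\ \Bigl[1-\bigl(\tfrac{M(u)}{M(Q_1)}\bigr)^{1/2}\Bigr]\Bigl[\tfrac12\|\nabla u\|_{L^2}^2+\tfrac16\|u\|_{L^6}^6\Bigr],
\]
together with the bound $M(u)/M(Q_1)\le 1-c(\L_0)<1$ (which is what keeping $\L$ bounded buys us — it prevents $M(u)$ from approaching $M(Q_1)$), yields $\|\nabla u\|_{L^2}^2\lesssim_{\L_0}E(u)$; adding $M(u)$ gives $\|u\|_{H^1}^2\sim_{\L_0}E(u)+M(u)$, and then $E(u)+M(u)\sim_{\L_0}\L(u)$ follows once more from the defining formula and the two-sided control of $\mathrm{dist}((M(u),E(u)),\Omega)$ on the relevant compact region. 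Finally (v) is a compactness/continuity statement: the map $(m,e)\mapsto\L(m,e)$ is continuous on the complement of $\Omega$ and tends to $+\infty$ at $\partial\Omega$ (since $\mathrm{dist}\to 0$ while $M+E$ stays positive), so the sublevel sets $\{\L\le L\}$ are compact subsets of $\K\cup\{0\}$; hence if $(M(u_n),E(u_n))$ has $\L(u_n)\to\L(M_0,E_0)<\infty$ with $M(u_n)\le M_0$, $E(u_n)\le E_0$, the sequence lies in a compact set and any subsequential limit $(m_\infty,e_\infty)$ satisfies $\L(m_\infty,e_\infty)=\L(M_0,E_0)$ with $m_\infty\le M_0$, $e_\infty\le E_0$; strict monotonicity of $\L$ in each coordinate (from (iii), which upgrades to strict monotonicity because $\mathrm{dist}(\cdot,\Omega)$ is strictly decreasing in each coordinate in the region where $\Omega$ has a strictly decreasing graph as boundary, and $M+E$ is strictly increasing) forces $m_\infty=M_0$ and $e_\infty=E_0$, giving convergence of the full sequence.

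I expect part (iii)/(v) — specifically, pinning down the precise monotonicity and strict monotonicity of $\mathrm{dist}((m,e),\Omega)$ as a function of each coordinate, and the compactness of sublevel sets of $\L$ — to be the main obstacle: it is elementary planar geometry but must be done carefully because $\partial\Omega$ is only Lipschitz (it has a corner at $(M(S),E(S))$ and a horizontal ray for $m\ge M(Q_1)$), and the strict-monotonicity needed in (v) is exactly where the strictly decreasing character of $\EE$ from Theorem \ref{SK} enters. Parts (i), (ii), (iv) should then be routine given the earlier results.
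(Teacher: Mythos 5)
Your proposal is correct and follows essentially the same route as the paper: part (i) via the coercivity bound \eqref{IIN}/\eqref{Eii}, part (ii) from Theorem \ref{SK}(i), part (iii) via the monotonicity of $\EE$ giving $\Omega+[0,\infty)^{2}=\Omega$ and hence the distance monotonicity \eqref{dset}, part (iv) via the key observation that $(M(Q_{1}),E(u))\in\Omega$ forces $\mathrm{dist}\leq M(Q_{1})-M(u)$ and thus keeps $M(u)/M(Q_{1})$ away from $1$ when $\L(u)\leq\L_{0}$, and part (v) from monotonicity of $\L$. The only (inessential) quibble is in (v): the strict monotonicity of $\L$ in each coordinate already follows from the strict increase of the numerator $M+E$ together with the \emph{weak} decrease of $\mathrm{dist}(\cdot,\Omega)$, so neither the strictly decreasing character of $\EE$ nor strict monotonicity of the distance --- which in fact fails, e.g.\ in $e$ near the vertical edge $\{M(S)\}\times[\EE(M(S)),\infty)$ of $\Omega$ --- is actually needed.
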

\begin{proof}
It is clear that $\L(u)=0$ if and only if $u\equiv 0$. Now notice that if $0<\L(u)<\infty$, then by definition $(M(u),E(u))\notin \Omega$. In particular, $0<M(u)<M(Q_{1})$ and $\EE(m)\leq e$. By inequality \eqref{SGNH} with $\alpha=1$ we obtain
\[-\lambda_{1}\|u\|^{4}_{L^{4}}-\lambda_{2}\|(K\ast|u|^{2})|u|^{2}\|_{L^{1}}\leq 
\frac{8}{3}\( \frac{M(u)}{M(Q_{1})} \)^{\frac{1}{2}}\|\nabla u\|^{\frac{3}{2}}_{L^{2}}\|u\|^{\frac{3}{2}}_{L^{6}}.  \]
Then, young's inequality implies that
\begin{equation}\label{Eii}
\begin{split}
E(u)\geq \frac{1}{2}\|\nabla u\|^{2}_{L^{2}}+\frac{1}{6}\|u\|^{6}_{L^{6}}-\frac{2}{3}
\( \frac{M(u)}{M(Q_{1})} \)^{\frac{1}{2}}\|\nabla u\|^{\frac{3}{2}}_{L^{2}}\|u\|^{\frac{3}{2}}_{L^{6}}\\
\geq \left[1-\( \frac{M(u)}{M(Q_{1})} \)^{\frac{1}{2}} \right]\left[\frac{1}{2}\|\nabla u\|^{2}_{L^{2}}+\frac{1}{6}\|u\|^{6}_{L^{6}}\right].
\end{split}
\end{equation}
Therefore, $e=E(u)>0$ if $0<\L(u)<\infty$, which implies (i). \\
Now assume that $0<\L(u)<\infty$. From (i) we obtain that $(M(u),E(u))\in \K$.  Thus by Theorem \ref{SK} we infer that $I(u)>0$, which proves item (ii).\\
By using the monotonicity of $\EE(m)$ (see Theorem \ref{SK}) we see that if  $M(u_{1})\leq M(u_{2})$ and $E(u_{1})\leq E(u_{2})$
then 
\begin{equation}
\label{dset}
\mbox{dist}\( (M(u_{1}), E(u_{1})), \Omega\)\geq \mbox{dist}\( (M(u_{2}), E(u_{2})), \Omega\).
\end{equation}
From the definition of $\L$, we have that $\L(u_{1})\leq \L(u_{2})$. This proves item (iii).\\
Next, we consider (iv). Suppose that $\L(u)\leq \L_{0}$. Notice that by item (i) $E(u)\geq 0$.
 First, by inequality \eqref{Eii} we see that $E(f)\geq 0$ when $M(f)=M(Q_{1})$. This implies that there exists $(m^{\ast},e^{\ast})\in \Omega$
such that $m^{\ast}=M(Q_{1})$ and $e^{\ast}=E(u)$. In particular, we have
\[  \mbox{dist}\( (M(u), E(u)), \Omega\)\leq \mbox{dist}\( (M(u), E(u)), (m^{\ast},e^{\ast})\)=M(Q_{1})-M(u).\]
Since $E(u)\geq 0$, we infer that
\begin{equation}\label{bai}
 \L(u)\geq \frac{M(u)}{M(Q_{1})-M(u)}.
\end{equation}
By inequality above, since $\frac{M(u)}{M(Q_{1})}<1$, a simple calculation shows that
\[ 
1-\sqrt{\frac{M(u)}{M(Q_{1})}}\geq \frac{1}{2\L(u)+2}.
\]
From inequality \eqref{Eii} we obtain
\begin{equation}\label{Edp}
\L(u)\geq E(u) \geq \frac{1}{2\L(u)+2}\left[\frac{1}{2}\|\nabla u\|^{2}_{L^{2}}+\frac{1}{6}\|u\|^{6}_{L^{6}}\right]\geq 
\frac{1}{4\L(u)+4}\|u\|^{2}_{\dot{H}^{1}_{x}}.
\end{equation}
Inequality above show that $\|u\|^{2}_{\dot{H}^{1}_{x}}\lesssim_{\L_{0}}E(u)$
uniformly for all $u\in H^{1}(\R^{3})$ such that $\L(u)\leq \L_{0}$. In particular, by \eqref{bai}
and \eqref{Edp} we get $\|u\|^{2}_{{H}^{1}_{x}}\lesssim_{\L_{0}} \L(u)$
Notice also that since $N(u)\leq C\|u\|^{4}_{L^{4}}$, by the Sobolev embedding we have
\[E(u)= \frac{1}{2}\|u\|^{2}_{\dot{H}^{1}_{x}}+\frac{1}{6}\|u\|^{6}_{L^{6}}-N(u)\lesssim   \|u\|^{2}_{{H}^{1}_{x}} \(1+\L(u)^{2}\).\]
On the other hand, we have that $E(u)+M(u) \sim \|u\|^{2}_{{H}^{1}_{x}}$. Indeed, notice that $N(u)\leq C\|u\|^{4}_{L^{4}}$ for some positive constant $C$. A simple calculation shows that
\[2E(u)+\frac{2}{3}\(\frac{3}{6}C\)^{2}M(u)\geq  \|\nabla u\|^{2}_{L^{2}}+
\frac{1}{3}\int_{\R^{3}}|u|^{2}\(|u|^{2}-\frac{3}{4}C\)^{2}dx.\]
Therefore,
\[\|u\|^{2}_{\dot{H}^{1}_{x}}\lesssim E(u)+M(u) \lesssim \|u\|^{2}_{{H}^{1}_{x}} \(1+\L(u)^{2}\). \]
To complete the proof of Item (iv), we need to show that $E(u)+M(u) \sim \L(u)$. First, we note that if $\frac{4 M(u)}{M(Q_{1})}\geq 1$,
then 
\[ \L(u)\leq \(\frac{4\L_{0}}{M(Q_{1})}\) M(u)+E(u).\]
Here we have used that $E(u)\geq 0$. On the other hand, if $\frac{4 M(u)}{M(Q_{1})}\leq 1$, we see that
\[ \mbox{dist}\( (M(u), E(u)), \Omega\)\geq M(S)-M(Q_{1})=\(\frac{4}{3\sqrt{3}}-\frac{1}{4}\)\geq \frac{1}{2}M(Q_{1}).
\]
This implies, by definition of $\L$, that
\[
\L(u)\leq (1+2[M(Q_{1})]^{-1})E(u)+(2[M(Q_{1})]^{-1})M(u).
\]
Moreover, combining \eqref{bai} and \eqref{Edp} we obtain
\[E(u)+M(u)\leq [1+M(Q_{1})]\L(u),\]
which completes the proof of Item (iv). Finally, Item (v) is  immediate from the definition of the function $\L$ and \eqref{dset}.
\end{proof}

Next we have a small data scattering result.
\begin{proposition}\label{Spcc}
Let $u_{0}\in H^{1}(\R^{3})$. There exists $\delta>0$ such that if $\L(u_{0})<\delta$, then there exists a unique solution to \eqref{NLS}
in $\R\times \R^{3}$ with initial condition $u(0)=u_{0}$, which satisfies
\begin{equation}\label{Gsb}
\|u\|_{L^{10}_{t,x}} \lesssim \|\nabla u_{0}\|_{L^{2}}.
\end{equation}
In particular, the solution scatters in $H^{1}(\R^{3})$, that is, there exists $u_{\pm}$ such that
\[\lim_{t\rightarrow\infty} \|u(t)-e^{it \Delta}u_{\pm}\|_{H^{1}}=0. \]
\end{proposition}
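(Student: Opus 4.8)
The plan is to deduce this from the perturbative scheme already used for Proposition~\ref{Scd}, after first extracting smallness of \emph{both} Sobolev norms of $u_{0}$. Since $\L$ is conserved by the flow, the global existence and uniqueness of $u$ on $\R\times\R^{3}$ come for free from Theorem~\ref{Th1}; and by Lemma~\ref{Lel}(iv), $\L(u_{0})<\delta$ forces $\|u_{0}\|_{H^{1}}^{2}\sim_{\delta}\L(u_{0})<\delta$, so $\|\nabla u_{0}\|_{L^{2}}$ and $\|u_{0}\|_{L^{2}}$ are both $O(\sqrt{\delta})$. It then remains to establish the global bound \eqref{Gsb} and scattering, and for that I would revisit the estimates of Proposition~\ref{Scd} keeping careful track of the homogeneity of each factor.

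\textbf{The spacetime bound.} Following \cite{Zhang2006}, I would split $u=v+w$, where $v$ solves $(i\partial_{t}+\Delta)v=|v|^{4}v$ with $v(0)=u_{0}$; since $\|u_{0}\|_{\dot H^{1}}$ is small, the small-data theory underlying Theorem~\ref{WGPQN} gives $\|v\|_{L^{10}_{t,x}}+\|\nabla v\|_{S^{0}(\R)}\lesssim\|\nabla u_{0}\|_{L^{2}}$ and $\|v\|_{\dot{Z}^{0}_{\R}}\lesssim\|u_{0}\|_{L^{2}}$. The remainder $w$ solves \eqref{0IPN} with $w(0)=0$, and I would run the continuity argument of \cite[p.~438]{Zhang2006} for $\|w\|_{Z^{1}_{\R}}$ using the Strichartz--H\"older bounds \eqref{Pgd}, \eqref{Pge}, \eqref{Cic}. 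The new point is homogeneity: every term in the Duhamel expansion of $w$ carries at least one $\dot H^{1}$-Strichartz factor ($\nabla v$ or $\nabla w$), so the argument closes as $\|w\|_{Z^{1}_{\R}}\lesssim\|\nabla u_{0}\|_{L^{2}}\bigl(\|u_{0}\|_{H^{1}}+\|w\|_{Z^{1}_{\R}}\bigr)^{k}$ for suitable small powers, hence $\|w\|_{Z^{1}_{\R}}\lesssim\|\nabla u_{0}\|_{L^{2}}$ with a small constant. Adding the two pieces and using the embedding $\dot{H}^{1,\frac{30}{13}}_{x}\hookrightarrow L^{10}_{x}$ gives $\|u\|_{L^{10}_{t,x}}\le\|v\|_{L^{10}_{t,x}}+\|w\|_{L^{10}_{t,x}}\lesssim\|\nabla u_{0}\|_{L^{2}}$, and, just as in \eqref{Uc}, one further application of Strichartz upgrades this to finiteness of $\|u\|_{L^{q}_{t}H^{1,r}_{x}(\R\times\R^{3})}$ for every admissible pair.

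\textbf{Scattering.} With $F(u)=\lambda_{1}|u|^{2}u+\lambda_{2}(K\ast|u|^{2})u+|u|^{4}u$, I would define $u_{\pm}:=u_{0}-i\int_{0}^{\pm\infty}e^{-is\Delta}F(u(s))\,ds$; the very same H\"older and Strichartz estimates, applied on a window $[t_{1},t_{2}]$, bound the tail $\int_{t_{1}}^{t_{2}}e^{-is\Delta}F(u(s))\,ds$ by $\|u\|_{Z^{1}_{[t_{1},t_{2}]}}^{3}+\|u\|_{Z^{1}_{[t_{1},t_{2}]}}^{5}$, which tends to $0$ as $t_{1},t_{2}\to\pm\infty$ since $\|u\|_{Z^{1}_{\R}}<\infty$. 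Thus $e^{-it\Delta}u(t)$ is Cauchy in $H^{1}(\R^{3})$, the limits $u_{\pm}$ exist, and $\|u(t)-e^{it\Delta}u_{\pm}\|_{H^{1}}\to0$.

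\textbf{Main obstacle.} The hard part is the one that already appears in Zhang's argument: the cubic nonlinearity $|u|^{2}u$ and the nonlocal term $(K\ast|u|^{2})u$ are energy-subcritical, so the naive Strichartz estimate on an interval $I$ comes with a factor $|I|^{1/2}$ and does not close on $I=\R$. This is handled by estimating these terms through the off-diagonal pair $L^{5}_{t}L^{5}_{x}$ and the interpolation $\|f\|_{L^{5}_{t}L^{5}_{x}}^{2}\lesssim\|f\|_{\dot{Z}^{1}_{I}}\|f\|_{\dot{Z}^{0}_{I}}$ --- trading the lost time integrability against smallness of the $L^{2}$-based norm $\|f\|_{\dot{Z}^{0}_{I}}$ --- together with the $L^{p}$-continuity of $f\mapsto K\ast f$ for $1<p<\infty$ from Lemma~\ref{LLk} for the nonlocal term; this is exactly where the mass smallness hidden in $\L(u_{0})<\delta$ is used.
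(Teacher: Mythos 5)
Your argument is correct, but it follows a genuinely different route from the paper's proof of Proposition~\ref{Spcc}. The paper does not perturb off the quintic equation here: it runs a direct contraction-mapping argument for the Duhamel map of \eqref{NLS} itself, on the space $X\subset L^{\infty}_{t}H^{1}_{x}\cap L^{2}_{t}H^{1,6}_{x}\cap L^{10}_{t,x}$ with all norms of size $O(\delta^{1/2})$ and the metric $\|u-v\|_{L^{2}_{t}L^{6}_{x}}$; the point is exactly the one you isolate at the start, namely that Lemma~\ref{Lel}(iv) turns $\L(u_{0})<\delta$ into smallness of the \emph{full} $H^{1}$ norm, so that every nonlinear term --- quintic, cubic and nonlocal alike --- closes perturbatively around the free evolution, and \eqref{Gsb} and scattering drop out of \eqref{Gbc} and the difference estimates. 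Your Zhang-style splitting $u=v+w$ also works (and your bookkeeping of the forcing term $\|v\|_{\dot Z^{1}}^{2}\|v\|_{\dot Z^{0}}\lesssim\delta^{3/2}$ and of the $L^{5}_{t,x}$ interpolation for the cubic and dipolar terms is sound), with the advantage of recycling \eqref{Pgd}--\eqref{Cic} verbatim and of not needing a new fixed-point scheme; its cost is that it imports Theorem~\ref{WGPQN} (or at least the small-data quintic theory) where none is needed, since in this regime the quintic term is itself small and can be treated on the same footing as the others. Both routes rest on the same two pillars: the equivalence $\|u_{0}\|_{H^{1}}^{2}\sim\L(u_{0})$ from Lemma~\ref{Lel}(iv), and the trade of the lost time integrability of the energy-subcritical terms against mass smallness via $\|f\|_{L^{5}_{t,x}}^{2}\lesssim\|f\|_{\dot Z^{1}}\|f\|_{\dot Z^{0}}$ together with Lemma~\ref{LLk}.
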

\begin{proof}Throughout the proof, all space-time norms will be taken over $\R\times \R^{3}$.
We will show that the solution map $u\mapsto \Phi (u)$,
\[  [\Phi u](t)=e^{it\Delta}u_{0}-i\int^{t}_{0}e^{i(t-s)\Delta}F(u(s))ds,\]
where 
\[F(u)=|u|^{4}u+\lambda_{1}|u|^{2}u+\lambda_{2}(K\ast|u|^{2})u,\]
is a contraction on the function space 
\[\begin{split}
X=\left\{u\in L^{\infty}_{t}H^{1}_{x}\cap L^{2}_{t}H^{1,6}_{x}\cap L_{t,x}^{10}: \|u\|_{L^{\infty}_{t}H^{1}_{x}}\leq 2\delta^{1/2},\right. \\
\left.\|u\|_{L^{2}_{t}H^{1,6}_{x}}\leq 2A\delta^{1/2}, \quad \|u\|_{L_{t,x}^{10}}\leq 2A\delta^{1/2}\right\}, 
\end{split}\]
under the metric given by
\[d(u,v)=\|u-v\|_{L^{2}_{t}L^{6}_{x}}.\]
We observe that $X$ is closed (hence complete) under this metric. Now estimating via H\"older inequality we have that
\begin{equation}\label{E12}
\begin{split}
\|\<\nabla\> |u|^{4}u  \|_{L_{t}^{\frac{10}{9}}L^{\frac{30}{7}}_{x}}\lesssim
\| u  \|^{4}_{L_{t,x}^{10}}\|\<\nabla\> u  \|_{L_{t}^{2}L^{6}_{x}},
\end{split}
\end{equation}
and 
\begin{equation}\label{E33}
\begin{split}
\|\<\nabla\> |u|^{2}u  \|_{L_{t}^{\frac{5}{3}}L^{\frac{30}{23}}_{x}}\lesssim 
\| u  \|_{L_{t}^{\infty}L_{x}^{2}}\| |u|\<\nabla\> u  \|_{L_{t}^{\frac{5}{3}}L^{\frac{15}{4}}_{x}}\\
\lesssim \| u  \|_{L_{t}^{\infty}L^{2}_{x}}\| u  \|_{L_{t,x}^{10}}\|\<\nabla\> u  \|_{L_{t}^{2}L^{6}_{x}}.
\end{split}
\end{equation}
Moreover, by using the fact that  the operator $f\mapsto K\ast f$ is continuous on $L^{p}(\R^{3})$ for each $1<p<\infty$ we obtain
\[
\begin{split}
\|\<\nabla\> [(K\ast|u|^{2})u] \|_{L^{\frac{30}{23}}_{x}}\lesssim 
\|\<\nabla\> K\ast|u|^{2}\|_{L^{\frac{15}{4}}_{x}}\|  u \|_{L^{2}_{x}}+\| K\ast|u|^{2}  \|_{L^{\frac{5}{3}}_{x}}
\| \<\nabla\> u  \|_{L^{6}_{x}}\\
\lesssim
\| u  \|_{L_{t}^{\infty}L^{2}_{x}}\| u  \|_{L_{t,x}^{10}}\|\<\nabla\> u  \|_{L_{t}^{2}L^{6}_{x}}.
\end{split}
\]
Thus,
\begin{equation}\label{E44}
\begin{split}
\|\<\nabla\> [(K\ast|u|^{2})u]  \|_{L_{t}^{\frac{5}{3}}L^{\frac{30}{23}}_{x}}\lesssim 
\| u  \|_{L_{t}^{\infty}L^{2}_{x}}\| u  \|_{L_{t,x}^{10}}\|\<\nabla\> u  \|_{L_{t}^{2}L^{6}_{x}}.
\end{split}
\end{equation}
Then we apply Strichartz estimates to estimate
\[\begin{split}
\| \Phi u  \|_{L_{t}^{\infty}L^{2}_{x}\cap L_{t}^{2}H^{1,6}_{x}}
\lesssim
\| \<\nabla\> u   \|_{L^{2}_{x}}
+\| u  \|_{L_{t}^{\infty}L^{2}_{x}}\| u  \|_{L_{t,x}^{10}}\|\<\nabla\> u  \|_{L_{t}^{2}L^{6}_{x}}
\\+
\| u  \|^{4}_{L_{t,x}^{10}}\|\<\nabla\> u  \|_{L_{t}^{2}L^{6}_{x}}.
\end{split}\]
Similarly, combining \eqref{E12}, \eqref{E33}, \eqref{E44} and the Sobolev embedding $L_{t}^{10}\dot{H}^{1,\frac{30}{13}}_{x}\hookrightarrow L_{t,x}^{10}$ we obtain
\begin{equation}\label{Gbc}
\begin{split}
\|  \Phi u  \|_{L_{t,x}^{10}}
\lesssim\| \nabla u_{0}   \|_{L^{2}_{x}}+\| u  \|_{L_{t}^{\infty}L^{2}_{x}}\| u  \|_{L_{t,x}^{10}}\|\nabla u  \|_{L_{t}^{2}L^{6}_{x}}+
\| u  \|^{4}_{L_{t,x}^{10}}\|\nabla u  \|_{L_{t}^{2}L^{6}_{x}}.
\end{split}
\end{equation}
Since $\| u  \|^{2}_{H^{1}_{x}}\lesssim \L(u(t))=\L(u_{0})<\delta$, choosing $\delta$ sufficiently small and $A$ sufficiently large,
we have that the functional $\Phi $ map $X$ back to itself. Next we show that $\Phi$ is a contraction. Indeed,
first notice that estimating via H\"older as above,
\[ \| |u|^{4}u-|v|^{4}v  \|_{L_{t}^{\frac{10}{9}}L^{\frac{30}{17}}_{x}}+\||u|^{2}u-|v|^{2}v \|_{L_{t}^{\frac{5}{3}}L^{\frac{30}{23}}_{x}}
\lesssim (\delta^{4}+\delta)\| u-v \|_{L_{t}^{2}L^{6}_{x}}.
\]
Moreover,
\[\begin{split}
\|(K\ast|u|^{2})u-(K\ast|v|^{2})v\|_{L^{\frac{30}{23}}_{x}}=
\|(K\ast|u|^{2})(u-v)-(K\ast(|u|^{2}-|v|^{2})v\|_{L^{\frac{30}{23}}_{x}}\\
\lesssim \||u|^{2}\|_{L^{\frac{5}{3}}_{x}}\|u-v\|_{L^{6}_{x}}+
\|v\|_{L^{2}_{x}}\||u|^{2}-|v|^{2}\|_{L^{\frac{15}{4}}_{x}}\\
\lesssim  
\| u  \|_{L^{2}_{x}} 
\|  u \|_{L^{10}_{x}} 
\| u-v  \|_{L^{6}_{x}}+
\| v  \|_{L^{2}_{x}}
\|  u+v \|_{L^{10}_{x}}
\| u-v  \|_{L^{6}_{x}}.
\end{split}\]
Thus,
\[\|(K\ast|u|^{2})u-(K\ast|v|^{2})v\|_{L_{t}^{\frac{5}{3}}L^{\frac{30}{23}}_{x}}
\lesssim \delta\| u-v \|_{L_{t}^{2}L^{6}_{x}},
\]
which implies that 
\[\|\Phi u-\Phi v\|_{L^{2}_{t}L^{6}_{x}}\lesssim \delta\|u-v\|_{L^{2}_{t}L^{6}_{x}}.\]
Therefore, choosing $\delta$ even smaller (if necessary), we see that $\Phi$ is a contraction on $X$.
Notice that the global space-time bound \eqref{Gsb} follows from inequality \eqref{Gbc}. 
Finally, scattering follows from another application of inequalities \eqref{E12}, \eqref{E33} and \eqref{Gbc}, as above.
\end{proof}

\begin{remark}{(Persistence of regularity)}\label{Rpr}
Suppose that $u: \R\times \R^{3}\rightarrow \C$ is a solution to \eqref{NLS} such that $\|u\|_{L^{10}_{t,x}}\leq L$,
Then 
\begin{equation}\label{S1}
\||\nabla|^{s}u\|_{S^{0}(\R)}\leq C(L, M(u))\||\nabla|^{s}u(t_{0})\|_{L^{2}_{x}}, \quad s=0,1,
\end{equation}
for any $t_{0}\in \R$. Indeed, since $\|u\|_{L^{10}_{t,x}}\leq L$, given $\epsilon>0$ (to be choose below) we can decompose $\R$ into $m$ many intervals
$I_{j}=[t_{j}, t_{j+1}]$ with $\|u\|_{L^{10}_{t,x}(I_{j}\times \R^{3})}\leq \epsilon$. In the following, we take all space-time norms over $I_{j}\times \R^{3}$.
Combining the inequalities \eqref{E12}, \eqref{E33} and \eqref{Gbc}, via Strichartz estimates we obtain
\begin{equation}\label{Dfu}
\begin{split}
\||\nabla|^{s}u\|_{S^{0}(I_{j})}\lesssim \||\nabla|^{s}u(t_{j})\|_{L^{2}_{x}}
+\| u  \|_{L_{t}^{\infty}L^{2}_{x}}\| u  \|_{L_{t,x}^{10}}\||\nabla|^{s}u  \|_{L_{t}^{2}L^{6}_{x}}
+\| u  \|^{4}_{L_{t,x}^{10}}\||\nabla|^{s} u  \|_{L_{t}^{2}L^{6}_{x}}\\
\lesssim \||\nabla|^{s}u(t_{j})\|_{L^{2}_{x}}+\epsilon (\epsilon^{3}+M(u)^{1/2})\||\nabla|^{s}u\|_{S^{0}(I_{j})},
\end{split}
\end{equation}
where $\epsilon=\epsilon(M(u))>0$ is chosen small enough that $\||\nabla|^{s}u\|_{S^{0}(I_{j})}\lesssim \||\nabla|^{s}u(t_{j})\|_{L^{2}_{x}}$.
Finally, by summing over the $m$ intervals, we get \eqref{S1}.
\end{remark}

\begin{remark}\label{Sld}
Let $u$ be a solution to \eqref{NLS} on $\R\times\R^{3}$ such that  $\|u\|_{L^{10}_{t,x}(\R\times \R^{3})}< \infty$. 
Combining Remark \ref{Rpr} and Strichartz estimates (as in the proof of Proposition \ref{Spcc}), it is not difficult to show that $u$ scatters in $H^{1}(\R^{3})$.
\end{remark}

Now we have a stability result for \eqref{NLS}, which will play an important role
in the proof of Theorem \ref{TheS}.
\begin{lemma}[Stability result]\label{stabi}
Let $I\subset \R$ be a time interval containing $t_{0}$ and  let $\tilde{u}$ defined on $I\times\R^{3}$ be a solution of the problem
\[ (i\partial_{t}+\Delta) \tilde{u}=\lambda_{1}|\tilde{u}|^{2}\tilde{u}+\lambda_{2}(K\ast|\tilde{u}|^{2})\tilde{u}+|\tilde{u}|^{4}\tilde{u}+e, \quad 
\tilde{u}(t_{0})=\tilde{u}_{0}\]
for some error $e:I\times\R^{3}\rightarrow \C$. Assume also the conditions
\begin{align}\label{SCN1}
	\| \tilde{u}  \|_{L_{t}^{\infty}H^{1}_{x}(I\times\R^{3})}&\leq A\\\label{SCN22}
	\| \tilde{u}  \|_{L_{t,x}^{10}(I\times\R^{3})}&\leq L
\end{align}
for some $A$, $L>0$. Let $u_{0}\in H^{1}(\R^{3})$ such that $\|u_{0}\|_{L^{2}}\leq M$ for some  positive constant $M$.  
There exists $\epsilon_{0}=\epsilon_{0}(\mbox{A,L,M})>0$ such that if $0<\epsilon<\epsilon_{0}$ and
\begin{align}\label{SCN33}
	\|u_{0}-\tilde{u}_{0} \|_{\dot{H}^{1}_{x}}&\leq \epsilon\\\label{SCN44}
	\|\nabla e  \|_{L_{t,x}^{\frac{10}{7}}(I\times\R^{3})}&\leq \epsilon,
\end{align}
then there exists a unique global solution $u$ to \eqref{NLS} with initial data $u_{0}$ at the time $t=t_{0}$
satisfying
\begin{equation}\label{SCN55}
\|\nabla (u-\tilde{u})\|_{S^{0}(I)}\leq C(E,L,M)\epsilon.
\end{equation}
\end{lemma}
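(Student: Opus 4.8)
The plan is to follow the standard two-scale stability scheme of Colliander--Keel--Staffilani--Takaoka--Tao \cite{TaoKell2008} and Killip--Oh--Pocovnicu--Visan \cite{KillipOhPoVi2017}: first prove a short-time perturbation statement under an extra smallness hypothesis on $\|\tilde u\|_{L^{10}_{t,x}}$, then partition $I$ into boundedly many such subintervals and iterate, keeping track of how the error accumulates. Throughout I write $F(u)=|u|^{4}u+\lambda_{1}|u|^{2}u+\lambda_{2}(K\ast|u|^{2})u$, so that $w:=u-\tilde u$ satisfies $w(t_{0})=u_{0}-\tilde u_{0}$ and $(i\partial_{t}+\Delta)w=F(\tilde u+w)-F(\tilde u)-e$. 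As a preliminary step I would first upgrade \eqref{SCN22} to a full Strichartz bound on $\tilde u$: splitting $I$ into $J_{1}=J_{1}(L,M)$ subintervals $I_{j}$ with $\|\tilde u\|_{L^{10}_{t,x}(I_{j}\times\R^{3})}\le\eta$ (with $\eta=\eta(M)$ small), applying Strichartz on the Duhamel formula for $\tilde u$ — which now carries the forcing $e$ — together with the nonlinear bounds \eqref{E12}, \eqref{E33}, \eqref{E44} (the last using Lemma \ref{LLk}) and hypotheses \eqref{SCN1}, \eqref{SCN44}, one gets for $s=0,1$
\[\||\nabla|^{s}\tilde u\|_{S^{0}(I_{j})}\lesssim_{M}\||\nabla|^{s}\tilde u(t_{j})\|_{L^{2}_{x}}+\epsilon+\eta\bigl(\eta^{3}+M^{1/2}\bigr)\||\nabla|^{s}\tilde u\|_{S^{0}(I_{j})};\]
choosing $\eta=\eta(M)$ small to absorb the last term and summing over $j$ yields $\|\<\nabla\>\tilde u\|_{S^{0}(I)}\le A'$ with $A'=A'(A,L,M)$.

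Next I would establish the short-time claim: there are $\eta_{0}=\eta_{0}(A',M)>0$, $\epsilon_{1}=\epsilon_{1}(A',M)>0$ and $C_{0}=C_{0}(A',M)$ so that, on a subinterval $J\ni t_{0}$ with $\|\tilde u\|_{L^{10}_{t,x}(J)}\le\eta_{0}$ and $\|\<\nabla\>\tilde u\|_{S^{0}(J)}\le A'$, if $\|u_{0}-\tilde u_{0}\|_{\dot H^{1}_{x}}\le\epsilon'$ and $\|\nabla e\|_{L^{10/7}_{t,x}(J)}\le\epsilon'$ with $\epsilon'\le\epsilon_{1}$, then $\|\nabla w\|_{S^{0}(J)}\le C_{0}\epsilon'$ and $\|w\|_{L^{10}_{t,x}(J)}\le C_{0}\epsilon'$. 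I would run this as a continuity/bootstrap argument on the set $\{t\in J:\ \|\nabla w\|_{S^{0}(J\cap[t_{0},t])}\le 2C\epsilon'\}$: writing $F(\tilde u+w)-F(\tilde u)$ as a finite sum of terms each carrying at least one factor of $w$, estimating its gradient in $L^{10/7}_{t,x}$ exactly as in \eqref{E12}--\eqref{Gbc} — with $\|w\|_{L^{10}_{t,x}}\lesssim\|\nabla w\|_{S^{0}}$ by Sobolev embedding, $\|u\|_{L^{\infty}_{t}L^{2}_{x}}=\|u_{0}\|_{L^{2}}\le M$ by mass conservation (Theorem \ref{Th1}), $\|\tilde u\|_{L^{\infty}_{t}L^{2}_{x}}\le A$ and $\|\<\nabla\>\tilde u\|_{S^{0}(J)}\le A'$ — to get
\[\|\nabla w\|_{S^{0}(J)}\lesssim\epsilon'+\bigl(\eta_{0}^{4}+\eta_{0}M^{1/2}+A'\eta_{0}^{3}\bigr)\|\nabla w\|_{S^{0}(J)}+C(A',M)\bigl(\|\nabla w\|_{S^{0}(J)}\bigr)^{2}.\]
Taking $\eta_{0}=\eta_{0}(A',M)$ small (to force the linear prefactor below $\tfrac12$) and then $\epsilon_{1}=\epsilon_{1}(A',M)$ small closes the bootstrap. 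Note that $u$ itself is global by Theorem \ref{Th1}, so only the a priori estimate, not existence, is at stake here.

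Then I would iterate. Partition $I$ into $J_{2}=J_{2}(A,L,M)$ consecutive subintervals $I_{k}$, arranged outward from $t_{0}$ on both sides, with $\|\tilde u\|_{L^{10}_{t,x}(I_{k})}\le\eta_{0}$; by the preliminary step $\|\<\nabla\>\tilde u\|_{S^{0}(I_{k})}\le A'$ on each. Apply the short-time claim on the piece adjacent to $t_{0}$ with $\epsilon'=\epsilon$, obtaining $\|\nabla w\|_{S^{0}}\le C_{0}\epsilon$ there and $\|w\|_{\dot H^{1}_{x}}\le C_{0}\epsilon$ at its far endpoint; iterating, on the $k$-th piece one may take $\epsilon'=(2C_{0})^{k-1}\epsilon$, which stays $\le\epsilon_{1}$ for all $k\le J_{2}$ as long as $\epsilon_{0}=\epsilon_{0}(A,L,M)$ is chosen with $(2C_{0})^{J_{2}}\epsilon_{0}\le\epsilon_{1}$. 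Summing the $S^{0}$ bounds over the $J_{2}$ pieces gives $\|\nabla(u-\tilde u)\|_{S^{0}(I)}\le C(A,L,M)\epsilon$, i.e. \eqref{SCN55}; global existence of $u$ is already contained in Theorem \ref{Th1}, and when in addition $\|\tilde u\|_{L^{10}_{t,x}(\R\times\R^{3})}<\infty$ one takes $I=\R$ and invokes Remark \ref{Sld} for scattering.

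The hard part will be the bookkeeping for the non-scale-invariant cubic terms, especially the nonlocal one: the difference $(K\ast|\tilde u+w|^{2})(\tilde u+w)-(K\ast|\tilde u|^{2})\tilde u$ must be controlled in $\nabla L^{10/7}_{t,x}$ purely through the $L^{p}$-boundedness of $f\mapsto K\ast f$ (Lemma \ref{LLk}) and the admissible-exponent splitting underlying \eqref{E44}, and it is exactly here that the mass bounds $\|u_{0}\|_{L^{2}}\le M$ and $\|\tilde u\|_{L^{\infty}_{t}H^{1}_{x}}\le A$ are genuinely needed (via the $L^{\infty}_{t}L^{2}_{x}$ factors), since these terms are not controlled by the $\dot H^{1}$-critical scaling alone. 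One also has to check carefully that the coefficient in front of $\|\nabla w\|_{S^{0}}$ in the bootstrap can be forced below $1$ by shrinking $\eta_{0}$ only: the sole contributions lacking intrinsic smallness in $\epsilon'$ are those carrying a factor $\|\<\nabla\>\tilde u\|_{S^{0}}\le A'$ together with positive powers of $\|\tilde u\|_{L^{10}_{t,x}}\le\eta_{0}$, so a choice of $\eta_{0}$ depending only on $A,L,M$ suffices — and this is what makes the whole iteration terminate after finitely many, $A,L,M$-dependent steps.
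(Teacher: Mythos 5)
Your overall architecture — first upgrading \eqref{SCN1}--\eqref{SCN22} to a full Strichartz bound $\|\langle\nabla\rangle\tilde u\|_{S^{0}(I)}\le A'(A,L,M)$, then a short-time perturbation claim closed by bootstrap, then iteration over finitely many subintervals with exponentially accumulating error — is exactly the paper's scheme (cf.\ \eqref{Cbc}, the reduction \eqref{adt}, and the closing inequality \eqref{UBa}). However, there is a concrete gap in your short-time step: your subdivision criterion asks only that $\|\tilde u\|_{L^{10}_{t,x}(J)}\le\eta_{0}$ while $\|\langle\nabla\rangle\tilde u\|_{S^{0}(J)}$ is merely bounded by $A'$, and under that hypothesis the cubic cross-terms do \emph{not} produce a linear coefficient that can be made small by shrinking $\eta_{0}$. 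The dangerous piece is $\nabla\tilde u\cdot\tilde u\cdot w$ (and its nonlocal analogues $(K\ast(\tilde u\bar w))\nabla\tilde u$, $(K\ast\nabla|\tilde u|^{2})w$): in the dual norm the only admissible H\"older splittings are of the form
\[
\|\nabla\tilde u\|_{L^{2}_{t}L^{6}_{x}}\,\|\tilde u\|_{L^{\infty}_{t}L^{2}_{x}}\,\|w\|_{L^{10}_{t,x}}
\qquad\text{or}\qquad
\|\nabla\tilde u\|_{L^{2}_{t}L^{6}_{x}}\,\|\tilde u\|_{L^{10}_{t,x}}\,\|w\|_{L^{\infty}_{t}L^{2}_{x}},
\]
and the first gives a linear term $A'A\,\|\nabla w\|_{S^{0}(J)}$ with no smallness, while the second trades the smallness for the factor $\|w\|_{L^{\infty}_{t}L^{2}_{x}}\le M+A$, which is bounded but not $O(\epsilon')$ (you only control the $\dot H^{1}$ difference of the data, not the $L^{2}$ difference), yielding a constant term $A'\eta_{0}(M+A)$ on the right-hand side. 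Neither option is covered by your claimed bootstrap inequality, whose linear coefficients $\eta_{0}^{4}+\eta_{0}M^{1/2}+A'\eta_{0}^{3}$ all carry intrinsic smallness; and if you accept the constant term you are forced to take $\eta_{0}\lesssim\epsilon$, which makes the number of subintervals, hence the accumulated constant $(2C_{0})^{J_{2}}$, depend on $\epsilon$ and the iteration no longer closes.

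The fix is exactly what the paper builds into its reduction \eqref{adt}: subdivide so that, on each piece, \emph{both} $\|\tilde u\|_{L^{10}_{t,x}}$ \emph{and} $\|\nabla\tilde u\|_{L^{2}_{t}L^{6}_{x}}$ are at most $\eta_{0}$. This is legitimate because your preliminary step gives $\|\nabla\tilde u\|_{L^{2}_{t}L^{6}_{x}(I)}\le A'(A,L,M)<\infty$, so the number of pieces is still controlled by $A,L,M$ alone; with this extra smallness the cross-term contributes $\eta_{0}A\,\|\nabla w\|_{S^{0}(J)}$ (as in \eqref{PI22}) and your bootstrap closes. Everything else in your argument — the treatment of the quintic and nonlocal terms via Lemma \ref{LLk}, the role of the mass bounds, the exponential bookkeeping across subintervals, and the appeal to Theorem \ref{Th1} for global existence — matches the paper and is fine.
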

\begin{proof}
First, we show that 
\begin{equation}\label{Cbc}
\|\nabla \tilde{u}\|_{S^{0}(I)}\leq C(A,L).
\end{equation}
 Indeed, notice that from \eqref{SCN22} we may decompose $\R$ into 
$m=m(\eta, L)$ many intervals $I_{j}=[t_{j}, t_{j+1}]$ such that on each space-time slab $I_{j}\times \R^{3}$,
\[\|u\|_{L^{10}_{t,x}(I_{j}\times \R^{3})}\leq \eta  \]
for $\eta>0$ to be choosen later. By Strichartz we have (see \eqref{Dfu})
\[\begin{split}
\|\nabla \tilde{u}\|_{S^{0}(I_{j})}\lesssim \|\tilde{u}\|_{L_{t}^{\infty}H^{1}_{x}(I\times\R^{3})}
+\| \tilde{u}  \|_{L_{t}^{\infty}L^{2}_{x}}\| \tilde{u}  \|_{L_{t,x}^{10}}\|\nabla\tilde{ u}  \|_{L_{t}^{2}L^{6}_{x}}\\
+\| u  \|^{4}_{L_{t,x}^{10}}\|\nabla \tilde{u } \|_{L_{t}^{2}L^{6}_{x}}+\|\nabla e   \|_{L^{\frac{10}{7}}_{t,x}}\\
\lesssim A+\epsilon+\eta\|\nabla \tilde{u}\|_{S^{0}(I_{j})}.
\end{split}\]
Choosing $\eta$ and $\epsilon_{1}$ small enough, a standard continuity argument show that
\[ \|\nabla \tilde{u}\|_{S^{0}(I_{j})}\leq C(A,L).\]
Summation over $I_{j}$ yields the space time bound \eqref{Cbc}.
By symmetry we may assume that $t_{0}=0$. Using the estimates \eqref{SCN22}, \eqref{Cbc}, and a standard combinatorial argument, we may prove the lemma under the additional hypothesis 
\begin{equation}\label{adt}
\| \tilde{u } \|_{L_{t,x}^{10}}+\|  \nabla \tilde{u} \|_{L_{t}^{2}L^{6}_{x}}\leq \eta,
\end{equation}
for a small constant $\eta>0$ to be chosen in a moment. Let $w:=u-\tilde{u}$. It is clear that $w$ solve the initial value problem
\begin{equation}\label{PNLS}
\begin{cases} 
(i\partial_{t}+\Delta) w=F(\tilde{u}+w)-F(\tilde{u})-e,\\
w(0)=u_{0}-\tilde{u}_{0},
\end{cases} 
\end{equation}
where  
\[F(u)=|u|^{4}u+\lambda_{1}|u|^{2}u+\lambda_{2}(K\ast|u|^{2})u.\]
In the following, we take all space-time norms over $(I\cap [-t,t])\times \R^{3}$. From H\"older inequality we get
\begin{equation}\label{PI11}
\begin{split}
\|\nabla (|w+\tilde{u}|^{4}(w+\tilde{u})-|\tilde{u}|^{4}\tilde{u})\|_{L_{t}^{\frac{10}{9}}L^{\frac{30}{17}}_{x}}
\lesssim \|  \nabla w \|_{L_{t}^{2}L^{6}_{x}}[  \| \tilde{u } \|^{4}_{L^{10}_{t,x}}+\| w \|^{4}_{L^{10}_{t,x}}]\\
+\|\nabla \tilde{u} \|_{L_{t}^{2}L^{6}_{x}}[  \| \tilde{u } \|^{3}_{L^{10}_{t,x}}\| w \|_{L^{10}_{t,x}}+\| w \|^{4}_{L^{10}_{t,x}}]
\end{split}
\end{equation}
and
\begin{equation}\label{PI22}
\begin{split}
\|\nabla (|w+\tilde{u}|^{2}(w+\tilde{u})-|\tilde{u}|^{2}\tilde{u})\|_{L_{t}^{\frac{5}{3}}L^{\frac{30}{23}}_{x}}\lesssim
\|\nabla w \|_{L_{t}^{2}L^{6}_{x}}[\| \tilde{u } \|_{L^{10}_{t,x}}\| \tilde{u }  \|_{L_{t}^{\infty}L^{2}_{x}}+ 
\| w\|_{L^{10}_{t,x}}\|w \|_{L_{t}^{\infty}L^{2}_{x}}]\\
+\|\nabla \tilde{u} \|_{L_{t}^{2}L^{6}_{x}}\| w\|_{L^{10}_{t,x}} [ \| \tilde{u }  \|_{L_{t}^{\infty}L^{2}_{x}}
+ \| w\|_{L_{t}^{\infty}L^{2}_{x}}].
\end{split}
\end{equation}
On the other hand, 
\begin{equation}\label{PI33}
\begin{split}
\nabla [K\ast|w+\tilde{u}|^{2}({w+\tilde{u}})-K\ast|\tilde{u}|^{2}\tilde{u}]=
[K\ast|w+\tilde{u}|^{2}\nabla(w+\tilde{u})-K\ast|\tilde{u}|^{2}\nabla \tilde{u}]\\+
[(K\ast\nabla|w+\tilde{u}|^{2})({w+\tilde{u}})-(K\ast\nabla|\tilde{u}|^{2})\tilde{u}].
\end{split}
\end{equation}
Since  the operator $f\mapsto K\ast f$ is continuous on $L^{p}(\R^{3})$ (see Lemma \ref{LLk}), via H\"older inequality we obtain
\begin{equation}\label{PI44}
\begin{split}
\|K\ast|w+\tilde{u}|^{2}\nabla(w+\tilde{u})-K\ast|\tilde{u}|^{2}\nabla \tilde{u}\|_{L_{t}^{\frac{5}{3}}L^{\frac{30}{23}}_{x}}
\\\lesssim \|\nabla \tilde{u} \|_{L_{t}^{2}L^{6}_{x}}\| w \|_{L^{10}_{t,x}}[\| \tilde{u }  \|_{L_{t}^{\infty}L^{2}_{x}}
+ \| w\|_{L_{t}^{\infty}L^{2}_{x}}]
\\+
 \|\nabla w \|_{L_{t}^{2}L^{6}_{x}}[(\| \tilde{u }  \|_{L_{t}^{\infty}L^{2}_{x}}
+ \| w\|_{L_{t}^{\infty}L^{2}_{x}})(\| w \|_{L^{10}_{t,x}}+\| \tilde{u}\|_{L^{10}_{t,x}})],
\end{split}
\end{equation}
and
\begin{equation}\label{PI55}
\begin{split}
\|(K\ast\nabla|w+\tilde{u}|^{2})({w+\tilde{u}})-(K\ast\nabla|\tilde{u}|^{2})\tilde{u}\|_{L_{t}^{\frac{5}{3}}L^{\frac{30}{23}}_{x}}
\\\lesssim (\| w\|_{L_{t}^{\infty}L^{2}_{x}}+1)
[\|\nabla w \|_{L_{t}^{2}L^{6}_{x}}+ \|\nabla \tilde{u} \|_{L_{t}^{2}L^{6}_{x}}]
[\| w \|_{L^{10}_{t,x}}+\| \tilde{u}\|_{L^{10}_{t,x}}].
\end{split}
\end{equation}
Since $\| \tilde{u }  \|_{L_{t}^{\infty}L^{2}_{x}}
+ \| w\|_{L_{t}^{\infty}L^{2}_{x}}\lesssim_{A,M} 1$, we deduce from \eqref{PI11}, \eqref{PI22}, \eqref{PI33}, \eqref{PI44} and \eqref{PI55} that
\begin{equation}\label{UBa}
\varphi(t)\lesssim \epsilon+\eta^{2}+\eta\varphi(t)+\eta^{4}\varphi(t)+\eta\varphi(t)^{4}+\varphi(t)^{2}+\varphi(t)^{5},
\end{equation}
where we have set
\[\varphi(t):=\|w\|_{\dot{S}^{1}(I\cap[-t,t])}.\]
Thus, by \eqref{UBa} and the usual bootstrap argument yields \eqref{SCN55}, provided $\epsilon_{1}$ and $\eta$ are chosen sufficiently small depending on $E$ $L$, $M$. This completes the proof of lemma.
\end{proof}

We need the following profile decomposition property, which is a key ingredient in proving the
existence of minimal blowup solutions. 
\begin{theorem}[Linear profile decomposition]\label{Profi}
Let $\left\{f_{n}\right\}_{n\in \N}$ be a bounded sequence of $H^{1}(\R^{3})$. Up to subsequence, there exist
 $J^{\ast}\in \left\{0,1,2,\ldots\right\}\cup\left\{\infty\right\}$, non-zero profiles 
$\left\{\phi^{j}\right\}^{J^{\ast}}_{j=1}\subset \dot{H}_{x}^{1}(\R^{3})\setminus\left\{0\right\}$ and parameters
$\left\{(\lambda^{j}_{n}, t^{j}_{n}, x^{j}_{n})\right\}^{J^{\ast}}_{j=1}\subset (0,1]\times \R\times\R^{3}$ satisfying for any fixed $j$,
\begin{itemize}
	\item $\lambda^{j}_{n}\equiv 1$ or $\lambda^{j}_{n}\rightarrow 0$ and $t^{j}_{n}\equiv 0$ or $t^{j}_{n}\rightarrow\pm\infty$,
	\item If $\lambda^{j}_{n}\equiv 1$ then $\left\{\phi^{j}\right\}^{J^{\ast}}_{j=1}\subset {H}_{x}^{1}(\R^{3})$.
\end{itemize}
In addition, we can write
\begin{equation}\label{Dcom}
f_{n}=\sum^{J}_{j=1}\phi_{n}^{j}+W^{J}_{n}
\end{equation}
for each finite $1\leq J\leq J^{\ast}$, where
\begin{equation}\label{fucti}
\phi_{n}^{j}(x):=
\begin{cases} 
[e^{it^{j}_{n}\Delta}\phi^{j}](x-x^{j}_{n}), \quad \mbox{if $\lambda^{j}_{n}\equiv 1$},\\
(\lambda^{j}_{n})^{-\frac{1}{2}}[e^{it^{j}_{n}\Delta} P_{\geq (\lambda^{j}_{n})^{\theta}}\phi^{j}]\( \frac{x-x^{j}_{n}}{\lambda^{j}_{n}}\),
\quad \mbox{if $\lambda^{j}_{n}\rightarrow 0$},
\end{cases}
\end{equation}
for some $0<\theta<1$. Furthermore, the following properties hold:
\begin{itemize}
	\item Smallness of the reminder: 
	\begin{equation}\label{Sr}
\mbox{for every $\epsilon>0$ there is $J=J(\epsilon)$ such that}\,\, 
\limsup_{n\rightarrow\infty}\|e^{it\Delta}W^{J}_{n}\|_{L^{10}_{t,x}}<\epsilon.
   \end{equation}
		\item Weak convergence property:
		\begin{equation}\label{Wcp}
e^{-it^{j}_{n}\Delta}[(\lambda^{j}_{n})^{\frac{1}{2}}W^{J}_{n}(\lambda^{j}_{n}x+x^{j}_{n})]\rightharpoonup 0\quad \mbox{in}\,\,
\dot{H}^{1}_{x}, \quad \mbox{as $n\rightarrow\infty$.}
     \end{equation}
			\item The time and space sequence have a pairwise divergence property: for all $1\leq j\neq k\leq J^{\ast}$
	\begin{equation}\label{Pow}
\lim_{n\rightarrow \infty}\left[ \frac{\lambda^{j}_{n}}{\lambda^{k}_{n}}+\frac{\lambda^{k}_{n}}{\lambda^{j}_{n}} 
+\frac{|x^{j}_{n}-x^{k}_{n}|}{\lambda^{j}_{n}\lambda^{k}_{n}}+
\frac{|t^{j}_{n}(\lambda^{j}_{n})^{2}-t^{k}_{n}(\lambda^{k}_{n})^{2}|}{\lambda^{j}_{n}\lambda^{k}_{n}}\right]=\infty.
\end{equation}		
		\item Orthogonality in norms: for any $J\in \N$
		\begin{align}\label{PE11}
			\| f_{n}  \|^{2}_{L^{2}_{x}}&=\sum^{J}_{j=1}\| \phi^{j}_{n}  \|^{2}_{L^{2}_{x}}+\| W^{j}_{n}  \|^{2}_{L^{2}_{x}}+o_{n}(1),\\\label{PE22}
			\| f_{n}  \|^{4}_{L^{4}_{x}}&=\sum^{J}_{j=1}\| \phi^{j}_{n}  \|^{2}_{L^{4}_{x}}+\| W^{j}_{n}  \|^{4}_{L^{4}_{x}}+o_{n}(1),\\\label{PE33}
			\| f_{n}  \|^{6}_{L^{6}_{x}}&=\sum^{J}_{j=1}\| \phi^{j}_{n}  \|^{2}_{L^{6}_{x}}+\| W^{j}_{n}  \|^{6}_{L^{6}_{x}}+o_{n}(1),\\\label{PE44}
			\| f_{n}  \|^{2}_{\dot{H}^{1}_{x}}&=\sum^{J}_{j=1}\| \phi^{j}_{n}  \|^{2}_{\dot{H}^{1}_{x}}+
			\| W^{j}_{n}  \|^{2}_{\dot{H}^{1}_{x}}+o_{n}(1).
		\end{align}
	\end{itemize}
\end{theorem}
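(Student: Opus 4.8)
The plan is to run the concentration--compactness / linear profile decomposition machinery of Keraani and Killip--Visan in the mixed $\dot H^1$/$L^2$ form used in \cite[Subsection 5.2]{KillipOhPoVi2017}; since the dipole nonlinearity plays no role in the \emph{linear} evolution $e^{it\Delta}$, the argument is essentially the one for the energy-critical problem \eqref{ECgNLS}, with the single extra ingredient that the uniform $L^2$ bound on $\{f_n\}$ is used to rule out escaping scales. First, by the Strichartz estimate and the Sobolev embedding $L^{10}_t\dot H^{1,\frac{30}{13}}_x\hookrightarrow L^{10}_{t,x}$ we have $\|e^{it\Delta}f_n\|_{L^{10}_{t,x}}\lesssim\|f_n\|_{\dot H^1}\lesssim 1$, so there is something to decompose. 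The profiles are peeled off one at a time by iterating an inverse Strichartz inequality, and the structural statements \eqref{Sr}--\eqref{PE44} then follow from the standard orthogonality bookkeeping.

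The heart of the matter is the following \emph{inverse Strichartz inequality}: if $\|f_n\|_{\dot H^1}\leq A$ and $\limsup_{n\to\infty}\|e^{it\Delta}f_n\|_{L^{10}_{t,x}}\geq\epsilon>0$, then, after passing to a subsequence, there are $\phi\in\dot H^1(\R^3)\setminus\{0\}$ and parameters $(\lambda_n,t_n,x_n)\in(0,\infty)\times\R\times\R^3$ such that
\[
g_n:=\lambda_n^{1/2}\,\bigl(e^{it_n\Delta}f_n\bigr)(\lambda_n\,\cdot\,+x_n)\rightharpoonup\phi\quad\text{in }\dot H^1(\R^3),
\]
with a quantitative lower bound $\|\phi\|_{\dot H^1}\gtrsim_A\epsilon^{\,\beta}$ for some $\beta>0$, together with the Pythagorean decoupling
\[
\lim_{n\to\infty}\Bigl(\|f_n\|_{\dot H^1}^2-\|f_n-\phi_n\|_{\dot H^1}^2-\|\phi_n\|_{\dot H^1}^2\Bigr)=0
\]
(and the analogous identities with $\|\cdot\|_{\dot H^1}^2$ replaced by $\|\cdot\|_{L^6}^6$ and by $\|e^{it\Delta}(\,\cdot\,)\|_{L^{10}_{t,x}}^{10}$), where $\phi_n$ denotes the bubble obtained from $\phi$ by undoing the symmetry transformation defining $g_n$. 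To prove this one uses a refined (Littlewood--Paley or bilinear) Strichartz estimate to locate a dyadic frequency $N_n$ and a spacetime point $(t_n,x_n)$ at which $|e^{it_n\Delta}P_{N_n}f_n(x_n)|\gtrsim_A N_n^{1/2}\epsilon^{\,\beta}$; one then sets $\lambda_n:=N_n^{-1}$, observes that $\{g_n\}$ is bounded in $\dot H^1$ and so has a weak limit $\phi$, and the pointwise lower bound (tested against a fixed Schwartz function) forces $\phi\neq0$ with the stated bound. The decoupling identities come from weak convergence together with a Br\'ezis--Lieb/Rellich argument. Finally, because $\|f_n\|_{L^2}\lesssim1$, low frequencies contribute negligibly to $\|e^{it\Delta}f_n\|_{L^{10}_{t,x}}$, so $N_n$ is bounded below; passing to a further subsequence we may assume $\lambda_n\equiv1$ or $\lambda_n\to0$, and that the rescaled time is either $\equiv0$ (absorb the limit into $\phi$) or $\to\pm\infty$. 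In the regime $\lambda_n\to0$ the profile lies only in $\dot H^1$, so we replace $\phi$ by $P_{\geq\lambda_n^{\theta}}\phi$ with a fixed $\theta\in(0,1)$; since $\lambda_n^{\theta}\to0$ this changes none of the limits (in particular $P_{\geq\lambda_n^{\theta}}\phi\to\phi$ in $\dot H^1$), while $\|\lambda_n^{-1/2}P_{\geq\lambda_n^{\theta}}\phi(\,\cdot\,/\lambda_n)\|_{L^2}\lesssim\lambda_n^{1-\theta}\|\phi\|_{\dot H^1}\to0$, which is precisely what makes each bubble \eqref{fucti} an element of $H^1(\R^3)$ and explains the truncation there.

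With the inverse Strichartz inequality in hand, one iterates. Set $W^0_n:=f_n$. Given $W^{J-1}_n$, stop (with $J^{\ast}:=J-1$) if $\limsup_n\|e^{it\Delta}W^{J-1}_n\|_{L^{10}_{t,x}}=0$; otherwise apply the inverse Strichartz inequality to $W^{J-1}_n$ to produce a profile $\phi^J$ and parameters $(\lambda^J_n,t^J_n,x^J_n)$, define $\phi^J_n$ by \eqref{fucti}, and put $W^J_n:=W^{J-1}_n-\phi^J_n$; a diagonal argument over $J$ gives a single subsequence valid for all $J$. By construction the weak-convergence property \eqref{Wcp} holds for the index $j=J$. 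The $\dot H^1$-decoupling telescopes to $\sum_{j\geq1}\|\phi^j\|_{\dot H^1}^2\leq\limsup_n\|f_n\|_{\dot H^1}^2<\infty$, so $\|\phi^J\|_{\dot H^1}\to0$ as $J\to J^{\ast}$; combined with the lower bound $\|\phi^J\|_{\dot H^1}\gtrsim_A\bigl(\limsup_n\|e^{it\Delta}W^{J-1}_n\|_{L^{10}_{t,x}}\bigr)^{\beta}$ this forces $\limsup_n\|e^{it\Delta}W^J_n\|_{L^{10}_{t,x}}\to0$, i.e. \eqref{Sr}. (If the procedure terminates after finitely many steps, \eqref{Sr} is immediate.)

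It remains to establish the pairwise divergence of parameters \eqref{Pow} and the orthogonality of norms \eqref{PE11}--\eqref{PE44}. The divergence \eqref{Pow} is proved by induction on $\max(j,k)$: if it failed for some $j<k$ then, after passing to a subsequence, the parameters $(\lambda^j_n,t^j_n,x^j_n)$ and $(\lambda^k_n,t^k_n,x^k_n)$ would be asymptotically equivalent, and expressing $W^j_n$ in the $j$-th renormalized frame one would see $\phi^k$ appear as a nonzero weak limit, contradicting \eqref{Wcp} for the index $j$; the same divergence, fed back into the construction, also propagates \eqref{Wcp} from $j=J$ to all $j<J$. Granting \eqref{Pow}, the $L^p$ ($p=2,4,6$) and $\dot H^1$ decouplings follow from the elementary principle that two bubbles separated in scale, in space, or in linear-flow time do not interact in the limit: expand the norm, change variables in each profile--profile cross term and use \eqref{Pow}, and handle the profile--remainder cross terms with \eqref{Wcp}; for a rescaled profile one also records $\|\phi^j_n\|_{L^2}+\|\phi^j_n\|_{L^4}\to0$, consistent with \eqref{PE11}--\eqref{PE22}. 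The main obstacle is the inverse Strichartz inequality itself --- specifically the refined Strichartz estimate used to locate the concentration frequency, and treating the two regimes $\lambda_n\equiv1$ and $\lambda_n\to0$ in a single framework while carrying the frequency-truncation bookkeeping of \eqref{fucti} so that every bubble is genuinely in $H^1(\R^3)$ (needed because the nonlinear flow of \eqref{NLS}, unlike that of \eqref{ECgNLS}, is only defined on $H^1$). Since the dipole kernel does not enter the linear decomposition, this step is essentially identical to \cite[Subsection 5.2]{KillipOhPoVi2017} (see also \cite{KiiVisan2008}); the only genuinely new point is the use of the $L^2$ bound to exclude the escaping-scale regime $\lambda_n\to\infty$, which is what lets us take $\lambda^j_n\in(0,1]$ in the statement.
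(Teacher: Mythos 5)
Your proposal is correct and follows exactly the route the paper takes: the paper's entire proof of Theorem \ref{Profi} is a citation to Theorem 7.5 of Killip--Oh--Pocovnicu--Visan \cite{KillipOhPoVi2017}, and your sketch is a faithful outline of that argument (inverse Strichartz, iteration with $\dot H^1$-telescoping, the frequency truncation $P_{\geq(\lambda^j_n)^{\theta}}$ to keep each bubble in $H^1$, and the $L^2$ bound to exclude escaping scales $\lambda_n\to\infty$). No discrepancies to report.
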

\begin{proof}
See Theorem 7.5 in \cite{KillipOhPoVi2017}.
\end{proof}
\begin{lemma}\label{Lde}
Under the hypotheses of Theorem \ref{Profi}, we have for any $J\in \N$,
\begin{equation}\label{PEK}
\int_{\R^{3}}(K\ast|f_{n}|^{2})|f_{n}|^{2}dx=\sum^{J}_{j=1}\int_{\R^{3}}(K\ast|\phi^{j}_{n} |^{2})|\phi^{j}_{n} |^{2}dx
+\int_{\R^{3}}(K\ast| W^{j}_{n}|^{2})| W^{j}_{n}|^{2}dx+o_{n}(1),
\end{equation}
and in particular,
\begin{equation}\label{PE}
E(f_{n})=\sum^{J}_{j=1}E(\phi^{j}_{n} )+E(W^{j}_{n})+o_{n}(1),
\end{equation}
where $o_{n}(1)\rightarrow0$ as $n\rightarrow \infty$.
\end{lemma}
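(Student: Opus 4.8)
The plan is as follows. First observe that \eqref{PE} follows immediately once \eqref{PEK} is known: $E$ is a fixed linear combination of $\|\nabla\cdot\|_{L^2}^2$, $\|\cdot\|_{L^4}^4$, $\int(K\ast|\cdot|^2)|\cdot|^2$ and $\|\cdot\|_{L^6}^6$, and the other three quantities decouple by \eqref{PE44}, \eqref{PE22} and \eqref{PE33} of Theorem \ref{Profi}. So the whole task is \eqref{PEK}. The starting point is Lemma \ref{LLk}: since $\widehat K\in L^\infty$, the conjugate-quadrilinear form $T(a,b,c,d):=\int_{\R^3}(K\ast(a\bar b))(c\bar d)\,dx$ obeys $|T(a,b,c,d)|\lesssim\|K\ast(a\bar b)\|_{L^2}\|c\bar d\|_{L^2}\lesssim\|a\|_{L^4}\|b\|_{L^4}\|c\|_{L^4}\|d\|_{L^4}$, so the nonlocal term only feels the $L^4$--geometry of the decomposition.

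Next I would substitute $f_n=\sum_{j=1}^J\phi_n^j+W_n^J$ into $T(f_n,f_n,f_n,f_n)$ and expand multilinearly into a finite sum of terms $\int(K\ast(\alpha_n\bar\beta_n))(\gamma_n\bar\delta_n)$ with each of $\alpha_n,\beta_n,\gamma_n,\delta_n$ equal to some $\phi_n^j$ ($1\le j\le J$) or to $W_n^J$. A preliminary reduction discards the ``dispersive'' profiles: if $\lambda_n^j\to0$ then $\|\phi_n^j\|_{L^4}^4=\lambda_n^j\|e^{it_n^j\Delta}P_{\ge(\lambda_n^j)^\theta}\phi^j\|_{L^4}^4\lesssim(\lambda_n^j)^{1-\theta}\|\phi^j\|_{\dot H^1}^4\to0$ by Bernstein's inequality and $\theta<1$; and if $t_n^j\to\pm\infty$ (so $\lambda_n^j\equiv1$) then $\|\phi_n^j\|_{L^4}=\|e^{it_n^j\Delta}\phi^j\|_{L^4}\to0$ by $\dot H^{3/4}\hookrightarrow L^4$, the dispersive decay of $e^{it\Delta}$, and density of Schwartz functions in $H^1$. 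Any monomial $\alpha_n\bar\beta_n$ containing such a profile tends to $0$ in $L^2$, so by the bound above the corresponding term is $o_n(1)$; hence we may assume every profile appearing has $\lambda_n^j\equiv1$, $t_n^j\equiv0$, i.e.\ $\phi_n^j=\phi^j(\cdot-x_n^j)$ with $\phi^j\in H^1(\R^3)$, and by \eqref{Pow} $|x_n^j-x_n^k|\to\infty$ whenever $j\neq k$. The diagonal terms $\int(K\ast|\phi_n^j|^2)|\phi_n^j|^2$ and $\int(K\ast|W_n^J|^2)|W_n^J|^2$ reproduce the right-hand side of \eqref{PEK} (up to $o_n(1)$), so it remains to show every other term is $o_n(1)$.

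A non-diagonal term is of one of two types. Type (A): it contains an off-diagonal factor $\alpha_n\bar\beta_n$ with $\alpha_n\neq\beta_n$; here I would show $\|\alpha_n\bar\beta_n\|_{L^2}\to0$, which by the bound above kills the whole term (the other factor being $L^4$--bounded, using $\sup_n\|W_n^J\|_{L^4}<\infty$ from \eqref{PE22}). For $\{\alpha_n,\beta_n\}=\{\phi_n^j,\phi_n^k\}$, $j\neq k$, this is the standard spatial-separation argument: reduce $\phi^j,\phi^k$ to compactly supported functions by density, and then $|x_n^j-x_n^k|\to\infty$ forces the supports of $\phi_n^j,\phi_n^k$ to be eventually disjoint. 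For $\beta_n=W_n^J$, \eqref{Wcp} gives $W_n^J(\cdot+x_n^j)\rightharpoonup0$ in $\dot H^1$, which together with the uniform $H^1$--bound on $W_n^J$ (from \eqref{PE11}, \eqref{PE44}) and the Rellich--Kondrachov theorem yields $W_n^J(\cdot+x_n^j)\to0$ in $L^4_{\mathrm{loc}}$, whence $\|\phi_n^j\overline{W_n^J}\|_{L^2}\to0$ by a localize-then-cut-the-tail argument (using $\phi^j\in L^4$). Type (B): the remaining non-diagonal terms all have the form $\int(K\ast|\alpha_n|^2)|\gamma_n|^2$ with $\alpha_n\neq\gamma_n$, and these cannot be reduced to $L^2$--smallness of a single monomial. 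For two distinct translated profiles, $\int(K\ast|\phi_n^j|^2)|\phi_n^k|^2=\int(K\ast|\phi^j|^2)(z+v_n)|\phi^k(z)|^2\,dz$ with $v_n:=x_n^k-x_n^j$, $|v_n|\to\infty$; I would write $|\phi^j|^2=h+r$ with $h\in C_c^\infty$ and $\|r\|_{L^{3/2}}<\epsilon$, use that $K\ast h\in C_0(\R^3)$ (Riemann--Lebesgue, since $\widehat K\,\widehat h\in L^1$) so that $\int(K\ast h)(z+v_n)|\phi^k|^2\to0$ by dominated convergence, and $|\int(K\ast r)(z+v_n)|\phi^k|^2|\le\|K\ast r\|_{L^{3/2}}\|\phi^k\|_{L^6}^2\lesssim\epsilon$ by the $L^{3/2}$--continuity of $K\ast$ (Lemma \ref{LLk}); letting $\epsilon\to0$ concludes. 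The terms $\int(K\ast|\phi_n^j|^2)|W_n^J|^2=\int(K\ast|\phi^j|^2)(z)|W_n^J(z+x_n^j)|^2\,dz$ are treated identically, using $K\ast|\phi^j|^2\in L^2\cap L^{3/2}$, the convergence $W_n^J(\cdot+x_n^j)\to0$ in $L^4_{\mathrm{loc}}$, and the $L^{3/2}$--tail bound with $\sup_n\|W_n^J\|_{L^6}<\infty$. Summing the finitely many contributions gives \eqref{PEK}, and hence \eqref{PE}.

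I expect Type (B) to be the main obstacle: the purely local terms $\|\cdot\|_{L^4}^4$ and $\|\cdot\|_{L^6}^6$ decouple by pointwise orthogonality of the bubbles, but the dipole kernel has only $|x|^{-3}$ decay, so $K\ast|\phi^j|^2$ carries a genuine slowly-decaying tail that must be weighed against the mass of the other bubble (resp.\ of the remainder) escaping to spatial infinity. This is where the splitting into a compactly supported principal part plus an $L^{3/2}$--small remainder, combined with the mapping properties of $K\ast$ from Lemma \ref{LLk}, is essential; the rest is a routine adaptation of the profile-decomposition machinery of Killip--Oh--Pocovnicu--Visan \cite{KillipOhPoVi2017}.
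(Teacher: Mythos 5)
Your argument is correct and is essentially the standard decoupling proof that the paper delegates to Bellazzini--Forcella \cite{BellazziniForcella2019}: bound the quadrilinear dipolar form via $\hat K\in L^{\infty}$, discard the small-scale and time-translated profiles through $L^{4}$-smallness, and kill the remaining cross terms by spatial separation together with the $C_{0}$-plus-$L^{3/2}$ splitting of $K\ast|\phi^{j}|^{2}$. The only genuinely new ingredient relative to that reference --- the $(\lambda^{j}_{n})^{1-\theta}$ Bernstein estimate eliminating the energy-critical profiles --- is handled correctly, so the proof is complete as written.
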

\begin{proof}
The proof is essentially the same as in Proposition 4.3 and Corollary 4.4 in \cite{BellazziniForcella2019}.
\end{proof}

\section{Existence of a critical solution}\label{Sps}
In the next sections, we give the proof of Theorem \ref{TheS}. As mentioned in the introduction, our arguments are parallel those of 
\cite{KillipOhPoVi2017}. For every $\tau>0$ we define the number $B(\tau)$ as follows:
\[B(\tau):=\sup\left\{\|  u \|_{L^{10}_{t,x}(\R\times\R^{3})}:\mbox{$u$ solves \eqref{NLS} and $\L(u)\leq \tau$}\right\}.
\]
Notice that by Proposition \ref{Spcc} and \eqref{Enl}, there exists $\tau>0$ small enough such 
that if $\L(u(t))=\L(u_{0})<\tau$, then $\|u\|_{L^{10}_{t,x}(\R\times\R^{3})} \lesssim \|\nabla u_{0}\|_{L^{2}}\lesssim E(u_{0})^{\frac{1}{2}}$;
that is, taking $\tau>0$ sufficiently small implies that $B(\tau)<\infty$. 
Suppose by contraction that Theorem \ref{TheS} fails. By the monotonicity of $B$ and Remark \ref{Sld}, there exists a critical level $0<\tau_{c}<\infty$
such that
\begin{equation}\label{tcri}
\tau_{c}=\sup\left\{\tau: B(\tau)<\infty\right\}=\inf\left\{\tau: B(\tau)=\infty\right\}.
\end{equation}
From Lemma \ref{stabi}, it is not difficult to show that $B(\tau_{c})=\infty$. Since $B(\tau_{c})=\infty$, by definition of $B$, we can find a sequence of solutions of \eqref{NLS}
with $\L(u_{n})\rightarrow \tau_{c}$ such that $\|  u_{n} \|_{L^{10}_{t,x}(\R\times\R^{3})}\rightarrow \infty$ as $n\rightarrow\infty$.
By translating $u_{n}$ in the time, we can assume that 
$\|  u_{n} \|_{L^{10}_{t,x}((0,\infty)\times\R^{3})}=\|  u_{n} \|_{L^{10}_{t,x}((-\infty,0)\times\R^{3})}\rightarrow \infty$.
Now our goal is to prove the existence of critical element $u_{c}\in H^{1}(\R^{3})$, 
which is a solution of \eqref{NLS} with $u_{c}(0)=u_{c,0}$ such that $\L(u_{c})=\tau_{c}$ and 
\begin{equation}\label{blow}
\|  u_{c} \|_{L^{10}_{t,x}([0,\infty)\times\R^{3})}=\|  u_{c} \|_{L^{10}_{t,x}((-\infty,0]\times\R^{3})}=\infty.
\end{equation}
Also, we prove that the orbit $\left\{u(t):t\in \R\right\}$ is precompact in $H^{1}(\R^{3})$ modulo translations.
More specifically, we have the following result.
\begin{theorem}[Existence and compactness of a critical solution]\label{ECEC}
Let $\tau_{c}$ be defined by \eqref{tcri}. There exists $u_{c,0}\in H^{1}(\R^{3})$ with $\L(u_{c,0})=\tau_{c}$
such that if $u_{c}$ is the global solution to \eqref{NLS} with initial data $u_{c}(0)=u_{c,0}$, then \eqref{blow}
holds. Moreover, the orbit $\left\{u(t):t\in \R\right\}$ is relatively compact in $H^{1}(\R^{3})$ modulo translations.
\end{theorem}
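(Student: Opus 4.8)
The plan is to run the concentration-compactness / induction-on-energy machinery of \cite{KillipOhPoVi2017}, with the new bookkeeping needed to carry the nonlocal term $(K\ast|u|^2)u$ through every estimate. Fix the sequence $\{u_n\}$ of solutions of \eqref{NLS} with $\L(u_n)\to\tau_c$ and $\|u_n\|_{L^{10}_{t,x}((0,\infty)\times\R^3)}=\|u_n\|_{L^{10}_{t,x}((-\infty,0)\times\R^3)}\to\infty$ produced above. By Lemma \ref{Lel}(iv) the data $f_n:=u_n(0)$ are bounded in $H^1(\R^3)$, and by \eqref{bai} their masses stay below $M(Q_1)$, in fact bounded away from it. Apply the linear profile decomposition, Theorem \ref{Profi}, to $\{f_n\}$: after passing to a subsequence we obtain profiles $\phi^j$, parameters $(\lambda^j_n,t^j_n,x^j_n)$ and remainders $W^J_n$ satisfying \eqref{Dcom}--\eqref{PE44}, and, by Lemma \ref{Lde}, the energy decoupling \eqref{PE}. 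Since the masses stay under $M(Q_1)$, inequality \eqref{Eii} gives $E(\phi^j_n)\ge0$ and $E(W^J_n)\ge0$ for $n$ large; together with \eqref{PE11}, \eqref{PE} and the monotonicity of $\L$ in $(M,E)$ (Lemma \ref{Lel}(iii)) this yields $M(\phi^j_n)\le M(f_n)+o_n(1)$, $E(\phi^j_n)\le E(f_n)+o_n(1)$, hence $\limsup_n\L(\phi^j_n)\le\tau_c$ for each $j$; moreover, whenever a second nontrivial profile or a nontrivial remainder is present it carries a definite amount of mass or of energy (again via \eqref{Eii}), so then $\limsup_n\L(\phi^j_n)\le\tau_c-\delta$ for all $j$ and some $\delta>0$.

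To each $\phi^j_n$ attach a nonlinear profile $v^j_n$, in three regimes. If $\lambda^j_n\equiv1$ and $t^j_n\equiv0$, let $v^j$ solve \eqref{NLS} with datum $\phi^j$; if $\lambda^j_n\equiv1$ and $t^j_n\to\pm\infty$, let $v^j$ be the solution of \eqref{NLS} scattering to $e^{it\Delta}\phi^j$ as $t\to\pm\infty$ (existence of wave operators, via the small-$L^{10}$ theory underlying Proposition \ref{Spcc}); if $\lambda^j_n\to0$ — so that $M(\phi^j_n)\to0$ — let $\theta^j$ be the corresponding solution of the energy-critical equation \eqref{ECgNLS}, which is global with $\|\theta^j\|_{L^{10}_{t,x}}<\infty$ by Theorem \ref{WGPQN}, and take $v^j_n$ to be the modulated rescaling of $\theta^j$ matching $\phi^j_n$. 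In the first two regimes, whenever $\limsup_n\L(\phi^j_n)<\tau_c$ one has $\|v^j_n\|_{L^{10}_{t,x}}\le B(\tau_c-\delta)+o_n(1)<\infty$ by the definition \eqref{tcri} of $\tau_c$ and the persistence of regularity of Remark \ref{Rpr}; in the third regime finiteness of $\|v^j_n\|_{L^{10}_{t,x}}$ comes directly from Theorem \ref{WGPQN}. A point requiring care specific to \eqref{NLS} is that in this last regime the cubic and dipole terms $\lambda_1|u|^2u+\lambda_2(K\ast|u|^2)u$ carry favorable powers of $\lambda^j_n\to0$ and must be shown negligible, so that $v^j_n$ is genuinely an approximate solution of \eqref{NLS}.

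Now argue by dichotomy. Suppose the decomposition is not reduced to a single $H^1$-profile at unit scale and zero time with vanishing remainder, i.e.\ $J^\ast\ge2$, or $W^1_n\not\to0$ in $\dot H^1$, or $\lambda^1_n\to0$, or $t^1_n\to\pm\infty$. In the sub-case of a single escaping-time profile with vanishing remainder, work on the half-line ($(0,\infty)$ if $t^1_n\to+\infty$, else $(-\infty,0)$) on which $\|e^{it\Delta}\phi^1_n\|_{L^{10}_{t,x}}\to0$: there $v^1$ is controlled by small-data theory, so Lemma \ref{stabi} forces the corresponding one-sided $L^{10}$ norm of $u_n$ to tend to $0$, against the symmetry normalization. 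In every other non-single-bubble configuration each nonlinear profile has finite $L^{10}$ norm, so form $\tilde u^J_n:=\sum_{j=1}^J v^j_n+e^{it\Delta}W^J_n$; using \eqref{Sr}, the pairwise divergence \eqref{Pow}, the $L^{10}$ bounds on the $v^j_n$, a standard orthogonality/summation argument and the nonlinear estimates for the quintic, cubic and nonlocal terms (handled exactly as in the proofs of Proposition \ref{Spcc} and Lemma \ref{stabi}), one checks that $\tilde u^J_n$ solves \eqref{NLS} up to an error small in $\|\nabla\cdot\|_{L^{10/7}_{t,x}}$ and that $\|\tilde u^J_n-u_n\|_{\dot H^1_x}\to0$ at $t=0$. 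The stability Lemma \ref{stabi} then gives $\limsup_n\|u_n\|_{L^{10}_{t,x}}<\infty$, a contradiction. Hence $J^\ast=1$, $\lambda^1_n\equiv1$, $t^1_n\equiv0$, $W^1_n\to0$ in $\dot H^1$ (hence in $H^1$, since $\lambda^1_n\equiv1$), so $f_n\to\phi^1=:u_{c,0}$ strongly in $H^1$ along the subsequence. Continuity of $\L$ gives $\L(u_{c,0})=\tau_c$ (strict inequality would force $\|u_c\|_{L^{10}_{t,x}}<\infty$ by \eqref{tcri}, contradicting, via Lemma \ref{stabi}, the blow-up of $\|u_n\|_{L^{10}_{t,x}}$), and the normalization of the $u_n$ passes to the limit to give \eqref{blow}.

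Finally, for the precompactness modulo translations, note that the $L^{10}_{t,x}$ norm is additive over disjoint time intervals while $\|u_c\|_{L^{10}_{t,x}(J\times\R^3)}<\infty$ on every compact $J$ by the local theory (Proposition \ref{LWP}, Remark \ref{Rpr}); hence $\|u_c\|_{L^{10}_{t,x}((T,\infty)\times\R^3)}=\|u_c\|_{L^{10}_{t,x}((-\infty,T)\times\R^3)}=\infty$ for every $T\in\R$. Given any sequence of times $\{t_n\}$, the data $u_c(t_n)$ are bounded in $H^1$ with $\L(u_c(t_n))=\tau_c$ and generate solutions blowing up in $L^{10}$ in both time directions; repeating the argument of the two preceding paragraphs on $\{u_c(t_n)\}$, the same dichotomy forces the single-bubble conclusion, so along a subsequence $u_c(t_n)(\cdot-x_n)\to\psi$ strongly in $H^1(\R^3)$ for some $\psi$, which is the asserted relative compactness. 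The main obstacle is the dichotomy step: organizing the case analysis to exclude critical-scale bubbles and time-escaping bubbles — where the time-symmetry normalization of the $u_n$ and the energy-critical theory of Theorem \ref{WGPQN} are essential — and threading the nonlocal term $(K\ast|u|^2)u$ through the decoupling (Lemma \ref{Lde}), the construction of $\tilde u^J_n$, and the stability estimates.
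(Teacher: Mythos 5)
Your proposal is correct and follows essentially the same route as the paper: the paper isolates the Palais--Smale compactness step as Proposition \ref{PSmc} (linear profile decomposition via Theorem \ref{Profi} and Lemma \ref{Lde}, nonlinear profiles with the shrinking-scale case handled by approximating with the energy-critical equation as in Lemma \ref{Lpsci}, and a two-scenario dichotomy resolved by the stability Lemma \ref{stabi}), then deduces the existence of $u_{c,0}$, the blow-up \eqref{blow}, and the orbit precompactness exactly as you describe. The only differences are organizational (the paper packages the dichotomy as ``one profile carries all mass and energy'' versus ``mass/energy splits'', which is equivalent to your single-bubble versus non-single-bubble case analysis).
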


The main step in the proof of the Theorem \ref{ECEC} is the following result, related with the linear profile decomposition
Theorem \ref{Profi}. 

\begin{proposition}\label{PSmc}
Let $\left\{u_{n}\right\}_{n\in \N}\subset H^{1}(\R^{3})$ be a sequence of global solutions of \eqref{NLS} and suppose that
$\lim_{n\rightarrow\infty}\L(u_{n})=\tau_{c}$ and
\begin{equation}\label{Blowp2}
\lim_{n\rightarrow\infty}\|  u_{n} \|_{L^{10}_{t,x}([t_{n},\infty)\times\R^{3})}=\|  u_{n} \|_{L^{10}_{t,x}((-\infty,t_{n}]\times\R^{3})}=\infty,
\end{equation}
where $\left\{t_{n}\right\}_{n\in \N}\subset \R$. It follows that there exists a subsequence, which we still denote by $\left\{u_{n}\right\}_{n\in \N}$,  and
$\left\{x_{n}\right\}_{n\in \N}\subset \R^{3}$ such that $\left\{u_{n}(t_{n},\cdot+x_{n})\right\}$
converges in  $H^{1}(\R^{3})$.
\end{proposition}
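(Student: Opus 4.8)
The argument follows the concentration--compactness scheme of \cite{KillipOhPoVi2017}, and I describe the main steps. By the time-translation invariance of \eqref{NLS} we may reduce to $t_n\equiv 0$, so that the initial data $u_{n,0}:=u_n(0)$ satisfy $\L(u_{n,0})=\L(u_n)\to\tau_c$ and, by Lemma \ref{Lel}(iv), form a bounded sequence in $H^1(\R^3)$. The plan is to apply the linear profile decomposition of Theorem \ref{Profi} to $\{u_{n,0}\}$,
\[
u_{n,0}=\sum_{j=1}^{J}\phi_n^{j}+W_n^{J},
\]
and then to eliminate, one by one, every scenario except the one in which there is a single profile with trivial parameters. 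First I would record the consequences of the orthogonality relations \eqref{PE11}, \eqref{PE44} and of the energy decoupling \eqref{PE}: combined with the coercivity estimate \eqref{Eii} (which forces $E(\phi_n^j)\ge -o_n(1)$ and $E(W_n^J)\ge -o_n(1)$ as long as the masses stay below $M(Q_1)$) and with parts (iii),(v) of Lemma \ref{Lel}, these yield
\[
\sum_{j=1}^{J}\limsup_{n\to\infty}\L(\phi_n^{j})+\limsup_{n\to\infty}\L(W_n^{J})\le\tau_c .
\]
Moreover each nonzero profile $\phi^{j}$ has $\limsup_n\L(\phi_n^{j})>0$ (by Lemma \ref{Lel}(iv) when $\lambda_n^j\equiv1$, and by \eqref{Eii} with vanishing mass when $\lambda_n^j\to0$), so whenever there are at least two nontrivial profiles each of them is strictly subcritical, $\limsup_n\L(\phi_n^{j})<\tau_c$; the same holds for the single profile if $\|W_n^{1}\|_{\dot H^1}\not\to0$.

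Next I would introduce the nonlinear profiles. For each $j$: if $\lambda_n^{j}\equiv1$ and $t_n^{j}\equiv0$, let $v^{j}$ be the solution of \eqref{NLS} with $v^{j}(0)=\phi^{j}\in H^1$; if $\lambda_n^{j}\equiv1$ and $t_n^{j}\to\pm\infty$, let $v^{j}$ be the solution of \eqref{NLS} asymptotic to $e^{it\Delta}\phi^{j}$ as $t\to\pm\infty$, furnished by a final-state construction based on the estimates underlying Proposition \ref{Spcc}; if $\lambda_n^{j}\to0$, let $v^{j}$ be the global solution of the energy-critical equation \eqref{ECgNLS} given by Theorem \ref{WGPQN} with $\phi^{j}\in\dot H^1$ as data/asymptotic state. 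Rescaling and translating accordingly, set $v_n^{j}$ and form the approximate solution
\[
\widetilde u_n^{J}:=\sum_{j=1}^{J}v_n^{j}+e^{it\Delta}W_n^{J}.
\]
Assume now that \textbf{(A)} there are at least two nontrivial profiles, or the single profile has $\|W_n^{1}\|_{\dot H^1}\not\to0$, or the single profile has $\lambda_n^{1}\to0$. In each of these cases every $v^{j}$ obeys a global spacetime bound $\|v^{j}\|_{L^{10}_{t,x}}<\infty$: for the subcritical profiles because $\limsup_n\L(v_n^{j})<\tau_c$ and $\tau_c=\sup\{\tau:B(\tau)<\infty\}$ (see \eqref{tcri}), and for a critical-scaling profile directly from Theorem \ref{WGPQN}. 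Using the parameter divergence \eqref{Pow}, the $L^{10}_{t,x}$-smallness of $e^{it\Delta}W_n^{J}$ from \eqref{Sr}, and the $L^p$-boundedness of $K\ast$ (Lemma \ref{LLk}) to decouple both the power and the nonlocal interactions, one checks that $\widetilde u_n^{J}$ satisfies the hypotheses of Lemma \ref{stabi}: the a priori bounds \eqref{SCN1}--\eqref{SCN22}, $\|\nabla(u_{n,0}-\widetilde u_n^{J}(0))\|_{\dot H^1}\to0$, and $\|\nabla e_n\|_{L^{10/7}_{t,x}}\to0$ with $e_n:=(i\partial_t+\Delta)\widetilde u_n^{J}-F(\widetilde u_n^{J})$, for $J$ sufficiently close to $J^{\ast}$ and $n$ sufficiently large. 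Lemma \ref{stabi} then gives $\limsup_n\|u_n\|_{L^{10}_{t,x}(\R\times\R^3)}<\infty$, contradicting \eqref{Blowp2}.

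Consequently the decomposition must consist of a single nontrivial profile $\phi:=\phi^{1}$ with $\lambda_n^{1}\equiv1$ and $W_n^{1}\to0$ in $\dot H^1$. It remains to exclude $t_n^{1}\to\pm\infty$: in that case one of the two half-lines $(-\infty,0]$, $[0,\infty)$ carries a vanishing linear evolution, $\|e^{it\Delta}u_{n,0}\|_{L^{10}_{t,x}}\to0$ on that half-line, so by Lemma \ref{stabi} (in its perturbative form, as in case (A) with $J=1$) $u_n$ would be bounded in $L^{10}_{t,x}$ there, again contradicting \eqref{Blowp2}. Hence $\lambda_n^{1}\equiv1$ and $t_n^{1}\equiv0$; since then $\phi\in H^1$ and the $L^2$-decoupling \eqref{PE11} forces $\|W_n^{1}\|_{L^2}\to0$ as well, we obtain $W_n^{1}\to0$ in $H^1$. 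Setting $x_n:=x_n^{1}$, the translated data $u_n(0,\cdot+x_n)=\phi+W_n^{1}(\cdot+x_n)$ converge to $\phi$ in $H^1(\R^3)$, which is the assertion.

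The step I expect to be the main obstacle is the verification that $\widetilde u_n^{J}$ is an approximate solution of \eqref{NLS} in the presence of the nonlocal dipolar term: unlike for the local nonlinearities, the interaction terms $K\ast(v_n^{j}\,\overline{v_n^{k}})$ and the mixed terms involving $e^{it\Delta}W_n^{J}$ cannot be localized in physical space, so their asymptotic vanishing as $n\to\infty$ must be extracted from the $L^p$-mapping properties of $K\ast$ combined with the scaling/space/time orthogonality \eqref{Pow}, and fed into the error bound $\|\nabla e_n\|_{L^{10/7}_{t,x}}\to0$. A related subtlety, when $\lambda_n^{1}\to0$, is that the local cubic and dipolar terms are supercritical for that scaling and must be shown negligible in the limit, so that the energy-critical solution of Theorem \ref{WGPQN} is indeed a good approximation.
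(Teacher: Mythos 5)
Your proposal follows essentially the same route as the paper: reduction to $t_n\equiv 0$, the linear profile decomposition of Theorem \ref{Profi} together with the decoupling of Lemma \ref{Lde}, nonlinear profiles built case-by-case (for $\lambda_n^j\to 0$ the paper routes through Lemma \ref{Lpsci}, i.e.\ the solution of the full equation obtained by perturbing off the energy-critical flow of Theorem \ref{WGPQN} --- the same device you describe), and the stability Lemma \ref{stabi} applied to $\sum_j \psi^j_n + e^{it\Delta}W_n^J$ to eliminate every scenario except a single profile with $\lambda_n^1\equiv 1$, $t_n^1\equiv 0$ and vanishing remainder. The obstacle you flag at the end is precisely where the paper's proof concentrates its effort (Claim II, in particular the estimate \eqref{SUPK} for $K\ast|u_n^J|^2\,\nabla e^{it\Delta}W_n^J$), which it resolves by combining the degree $-3$ homogeneity of $K$ with compact space-time approximation of the rescaled profiles and the local smallness of $\nabla e^{it\Delta}W_n^J$ coming from \eqref{Sr}.
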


We assume the Proposition \ref{PSmc} for the moment, and proceed with the proof of Theorem \ref{ECEC}

\begin{proof}[Proof of Theorem \ref{ECEC}]
As $B(\tau_{c})=\infty$, by definition it means that there exists a sequence  of initial data $u_{n}(0)$ such that
$\lim_{n\rightarrow\infty}\L(u_{n}(0))=\tau_{c}$. Moreover, if $u_{n}(t)$ is the corresponding solution to \eqref{NLS} with data initial 
$u_{n}(0)$, then we can have that \eqref{Blowp2} holds. By translating $u_{n}$ in the time, we may assume that
\begin{equation}\label{L10}
\|  u_{n} \|_{L^{10}_{t,x}([0,\infty)\times\R^{3})}=\|  u_{n} \|_{L^{10}_{t,x}((-\infty,0]\times\R^{3})}\rightarrow \infty, \quad \text{as $n\rightarrow\infty$}.
\end{equation}
Using Proposition \ref{PSmc}, it follows that the sequence $u_{n}(0)$ has a converging subsequence in $H^{1}(\R^{3})$ modulo
spatial translations. Thus, by symmetry, we may assume that there exists $u_{0,c}\in H^{1}(\R^{3})$ such that $u_{n}(0)\rightarrow u_{0,c}$ in $H^{1}(\R^{3})$ as $n\rightarrow\infty$. Let $u_{c}$ be the global solution to \eqref{NLS} with initial data $u_{0,c}$, then 
$\L(u_{c}(t))=\L(u_{0,c})=\tau_{c}$. In particular, combining \eqref{Enl}, \eqref{S1} and Lemma \ref{stabi}, we obtain that 
\begin{equation}\label{Coc}
\|  u_{c} \|_{L^{10}_{t,x}([0,\infty)\times\R^{3})}=\|  u_{c} \|_{L^{10}_{t,x}((-\infty,0]\times\R^{3})}=\infty.
\end{equation}
Otherwise, $\|  u_{n} \|_{L^{10}_{t,x}(\R\times\R^{3})}\lesssim 1$ uniformly in $n$, which contradicts with \eqref{L10}.

Finally, we consider the precompactness of $\left\{u(t):t\in \R\right\}$ in $H^{1}(\R^{3})$ modulo translations. Indeed, since $u$
is locally in $L^{10}_{t,x}$ (see Proposition \ref{LWP}) we infer that
\[\|  u_{n} \|_{L^{10}_{t,x}([T_{n},\infty)\times\R^{3})}=\|  u_{n} \|_{L^{10}_{t,x}((-\infty,T_{n}]\times\R^{3})}=\infty,
\]
for some time sequence $\left\{T_{n}\right\}_{n\in \N}\subset \R$. Hence, from Proposition \ref{PSmc} we see that  $u_{c}(T_{n})$ 
is precompact in  $H^{1}(\R^{3})$ modulo translations.
\end{proof}

The remainder of this section is dedicated to the proof of Proposition \ref{PSmc}.
In the proof of Proposition \ref{PSmc}, we use the following lemma.
\begin{lemma}\label{Lpsci}
Let $\left\{\lambda_{n}\right\}_{n\in \N}\subset (0,\infty)$ satisfy $\lambda_{n}\rightarrow 0$ as $n\rightarrow\infty$.
Let $\phi\in H^{1}(\R^{3})$, $\left\{x_{n}\right\}_{n\in \N}\subset \R^{3}$ and the time sequence $\left\{t_{n}\right\}_{n\in \N}\subset \R$
such that either $t_{n}\equiv0$ or $t_{n}\rightarrow \pm\infty$. For $0<\theta<1$, we define
\[\phi_{n}(x)=\lambda^{-\frac{1}{2}}_{n}[e^{it_{n}\Delta}P_{\geq \lambda^{\theta}_{n}}\phi]\Big(  \frac{x-x_{n}}{\lambda_{n}}\Big).\]
Then taking $n$ large enough, we have that the corresponding global solution $u(t)$  of \eqref{NLS} with initial data
$u_{0}=\phi_{n}$ satisfies
\begin{equation}\label{Csi}
\|\nabla u_{n}   \|_{S^{0}(\R)}\leq C(\|\phi\|_{\dot{H}^{1}_{x}}).
\end{equation}
In addition, for every $\epsilon>0$ there exist a number $N=N(\epsilon)\in \N$ and smooth compact supported functions 
$\chi_{\epsilon}$, $\psi_{\epsilon}$ and $\zeta_{\epsilon}$ on $\R\times\R^{3}$ satisfying for all $n\geq N$
\begin{align}\label{Small11}
\| u_{n}(t,x)-\lambda^{-1/2}_{n}\chi_{\epsilon}\Big(\frac{t}{\lambda^{2}_{n}}+t_{n}, \frac{x-x_{n}}{\lambda_{n}}\Big)  \|_{L^{10}_{t,x}}&<\epsilon,\\
	\label{Small22}
\| \nabla u_{n}(t,x)-\lambda^{-3/2}_{n}\psi_{\epsilon}\Big(\frac{t}{\lambda^{2}_{n}}+t_{n}, \frac{x-x_{n}}{\lambda_{n}}\Big)  \|_{L^{\frac{10}{3}}_{t,x}}&<\epsilon.
\end{align}
\end{lemma}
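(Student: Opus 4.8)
The plan is to compare the nonlinear solution $u_n$ with initial data $\phi_n$ to the solution $v_n$ of the energy-critical equation \eqref{ECgNLS} with the same data, exploiting that the low-mass/high-frequency structure of $\phi_n$ makes the subcritical perturbation $\lambda_1|u|^2u+\lambda_2(K\ast|u|^2)u$ negligible on the relevant (rescaled) time scales. First I would record the key smallness facts about $\phi_n$: since $\phi_n$ is obtained from $P_{\geq\lambda_n^\theta}\phi$ by the critical rescaling $(\lambda_n)^{-1/2}g((x-x_n)/\lambda_n)$, one has $\|\phi_n\|_{\dot H^1_x}\lesssim\|\phi\|_{\dot H^1_x}$ uniformly, while $\|\phi_n\|_{L^2_x}=\lambda_n\|P_{\geq\lambda_n^\theta}\phi\|_{L^2_x}\to 0$ (even with the frequency cutoff, $\|P_{\geq\lambda_n^\theta}\phi\|_{L^2_x}\lesssim\|\phi\|_{H^1_x}$, so the mass is $O(\lambda_n)$). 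Hence $M(u_n)=M(\phi_n)\to 0$, and by Proposition \ref{Spcc}/\eqref{Enl}, for $n$ large $u_n$ is global with $\|u_n\|_{L^{10}_{t,x}}\lesssim\|\nabla\phi_n\|_{L^2_x}\lesssim\|\phi\|_{\dot H^1_x}$; persistence of regularity (Remark \ref{Rpr}) then upgrades this to \eqref{Csi}.

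Next, for the approximation statements \eqref{Small11}--\eqref{Small22}, the idea is that after undoing the rescaling, $u_n$ is close to the solution $v$ of \eqref{ECgNLS} with data $e^{it_n\Delta}\phi$ (or $\phi$, when $t_n\equiv 0$), for which Theorem \ref{WGPQN} gives global space-time bounds. Precisely, set $w_n(s,y):=\lambda_n^{1/2}u_n(\lambda_n^2(s-t_n),\lambda_n y+x_n)$; this solves \eqref{NLS} but with the cubic and nonlocal terms carrying a factor $\lambda_n^2$ in front — i.e. $w_n$ solves the energy-critical equation \eqref{ECgNLS} up to an error $e_n$ with $\|\nabla e_n\|_{L^{10/7}_{t,x}}\lesssim\lambda_n^2(\|w_n\|_{\dot B^1}^2+\|w_n\|_{\dot B^1}^4)\to 0$, using the nonlinear estimates from the proof of Lemma \ref{GWP1} together with the uniform bound \eqref{Csi} (which is scale-invariant). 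Moreover the data $w_n(0)=\lambda_n^{1/2}\phi_n(\lambda_n y+x_n)=e^{it_n\Delta}P_{\geq\lambda_n^\theta}\phi\to e^{it_n\Delta}\phi$ in $\dot H^1_x$ (the frequency projection converges strongly). Applying the stability result Lemma \ref{stabi} for the energy-critical equation (or rather its standard analogue, since Lemma \ref{stabi} is stated for \eqref{NLS} but the same proof applies with $\lambda_1=\lambda_2=0$; alternatively absorb the small cubic/nonlocal terms into $e$) with comparison solution the genuine solution $v$ of \eqref{ECgNLS}, we get $\|\nabla(w_n-v)\|_{S^0(\R)}\to 0$.

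Finally I would transfer this back. Choose $\chi_\epsilon,\psi_\epsilon,\zeta_\epsilon$ to be smooth compactly supported functions approximating $v$ and $\nabla v$ in $L^{10}_{t,x}$ and $L^{10/3}_{t,x}$ respectively — possible since $v\in L^{10}_{t,x}$ and $\nabla v\in L^{10/3}_{t,x}$ by Theorem \ref{WGPQN} and Strichartz, and since these spaces contain $C_c^\infty$ as a dense subspace. Then for $n\geq N(\epsilon)$,
\[
\|u_n(t,x)-\lambda_n^{-1/2}\chi_\epsilon\big(\tfrac{t}{\lambda_n^2}+t_n,\tfrac{x-x_n}{\lambda_n}\big)\|_{L^{10}_{t,x}}
=\|w_n-\chi_\epsilon\|_{L^{10}_{s,y}}\leq\|w_n-v\|_{L^{10}_{s,y}}+\|v-\chi_\epsilon\|_{L^{10}_{s,y}}<\epsilon
\]
by scale-invariance of the $L^{10}_{t,x}$ norm and the bound $\|w_n-v\|_{L^{10}}\lesssim\|\nabla(w_n-v)\|_{S^0}\to 0$; likewise for \eqref{Small22} using that $\nabla$ scales with $\lambda_n^{-3/2}$ and that $L^{10/3}_{t,x}$ is a scale-invariant norm for $\nabla v$ (indeed the $L^{10/3}_{t,x}$ norm of $\nabla u$ is invariant under the critical scaling). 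The main obstacle is the handling of the frequency projection $P_{\geq\lambda_n^\theta}$ and the resulting rescaled error: one must check that the nonlinear estimates from Lemma \ref{GWP1}, applied at unit scale to $w_n$, really do produce an error that is $O(\lambda_n^2)$ and hence vanishing — this is where the precise scaling of the cubic and dipolar terms (which are \emph{not} scale-invariant) is used, and where the uniform space-time bound \eqref{Csi} is essential to prevent the constants from blowing up; the case $t_n\to\pm\infty$ additionally requires replacing $v$ by the solution of \eqref{ECgNLS} that scatters to $e^{it\Delta}\phi$, whose existence and space-time bounds again follow from Theorem \ref{WGPQN} and the standard construction of wave operators for the energy-critical equation.
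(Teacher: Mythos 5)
Your overall architecture is the same as the paper's (which in turn follows Proposition 8.3 of \cite{KillipOhPoVi2017}): compare $u_n$ with a global solution of the energy-critical quintic equation \eqref{ECgNLS}, show that the cubic and dipolar terms form a perturbation vanishing as $n\to\infty$, close via the stability Lemma \ref{stabi}, and obtain $\chi_\epsilon,\psi_\epsilon$ by density of $C^\infty_c$ in the scale-invariant spaces $L^{10}_{t,x}$ and $L^{10/3}_{t,x}$ using the finite space-time norms of the quintic solution. The only structural difference is that you work at unit scale with $w_n$ while the paper keeps the comparison solution $\tilde u_n$ at the original scale; these are equivalent.

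Two quantitative steps, however, do not hold as you wrote them. First, your citation of Proposition \ref{Spcc} for \eqref{Csi} is wrong: that result requires $\L(\phi_n)$ small, and $\L$ controls the energy, whereas $E(\phi_n)\sim\|\phi\|_{\dot H^1}^2$ is not small. The correct input is Proposition \ref{Scd}, whose smallness threshold depends only on $\|\phi_n\|_{\dot H^1}\lesssim\|\phi\|_{\dot H^1}$ and is met because $\|\phi_n\|_{L^2}=\lambda_n\|P_{\geq\lambda_n^\theta}\phi\|_{L^2}\to 0$ (the paper instead gets \eqref{Csi} directly from Lemma \ref{stabi} with the quintic comparison solution). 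Second, your error estimate $\|\nabla e_n\|_{L^{10/7}_{t,x}}\lesssim\lambda_n^2(\|w_n\|_{\dot B^1}^2+\|w_n\|_{\dot B^1}^4)$ is incorrect on both counts: the rescaled cubic and dipolar terms carry a factor $\lambda_n$, not $\lambda_n^2$ (check: $|u|^2u$ and $(K\ast|u|^2)u$ scale like $\lambda^{-3/2}$ under $u\mapsto\lambda^{1/2}u(\lambda^2\cdot,\lambda\cdot)$, against $\lambda^{-5/2}$ for $|u|^4u$), and these $\dot H^{1/2}$-critical terms cannot be estimated globally in time by $\dot H^1$-level Strichartz norms alone --- the bound is $\|\nabla(|w|^2w)\|_{L^{10/7}_{t,x}}\lesssim\|w\|_{L^{10}_{t,x}}\|w\|_{L^{10/3}_{t,x}}\|\nabla w\|_{L^{10/3}_{t,x}}$ and likewise for the dipolar term, so you must control the mass-level norm $\|w_n\|_{L^{10/3}_{t,x}}$. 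This is exactly where the frequency cutoff enters: the paper's comparison solution satisfies $\|\tilde u_n\|_{S^0(\R)}\lesssim\lambda_n^{1-\theta}$ (via Bernstein, $\|P_{\geq\lambda_n^\theta}\phi\|_{L^2}\lesssim\lambda_n^{-\theta}\|\phi\|_{\dot H^1}$), which yields the explicit decay $\|\nabla e\|_{L^{10/7}_{t,x}}\lesssim C(\|\phi\|_{\dot H^1})\lambda_n^{1-\theta}\to0$ and is the reason the hypothesis $\theta<1$ appears. You correctly identified this as the delicate point but did not carry out the bookkeeping, and the exponent you asserted is not the right one.
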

\begin{proof}
By symmetry, we may assume that $x_{n}\equiv 0$. Following the same argument as in \cite[Proposition 8.3]{KillipOhPoVi2017} 
we get that for every $n\in \N$ there exists a unique global solution $\tilde{u}_{n}$ of 
the defocusing quintic Schr\"odinger equation
\[(i\partial_{t}+\Delta) v=|v|^{4}v,\]
such that
\begin{equation}\label{Spli}
\begin{split}
&\|\nabla \tilde{u}_{n}   \|_{S^{0}(\R)}\leq C(\|\phi\|_{\dot{H}^{1}_{x}})\quad \mbox{and}\quad
\|\tilde{u}_{n}\|_{S^{0}(\R)}\leq C(\|\phi\|_{\dot{H}^{1}_{x}})\lambda^{1-\theta}_{n},\\
&\| \tilde{u}_{n}(0)-\phi_{n} \|_{\dot{H}^{1}_{x}}\rightarrow 0 \quad \text{as $n\rightarrow \infty$},
\end{split}
\end{equation}
Next we set $e:=-\lambda_{1}|\tilde{u}_{n}|^{2}\tilde{u}_{n}-\lambda_{2}(K\ast|\tilde{u}_{n}|^{2})\tilde{u}_{n}$. We deduce by 
interpolation 
\[\begin{split}
\|\nabla [(K\ast|\tilde{u}_{n}|^{2})\tilde{u}_{n}] \|_{L^{\frac{10}{7}}_{x}}
\lesssim \| (K\ast\nabla|\tilde{u}_{n}|^{2})\tilde{u}_{n}] \|_{L^{\frac{10}{7}}_{x}}
+\|(K\ast|\tilde{u}_{n}|^{2})\nabla\tilde{u}_{n} \|_{L^{\frac{10}{7}}_{x}}\\
\lesssim \|  \nabla |\tilde{u}_{n}|^{2} \|_{L^{\frac{5}{2}}_{x}}\| \tilde{u}_{n}  \|_{L^{\frac{10}{3}}_{x}}+
\||\tilde{u}_{n}|^{2} \|_{L^{\frac{5}{2}}_{x}}\|  \nabla \tilde{u}_{n}\|_{L^{\frac{10}{3}}_{x}}\\
\lesssim \|  \nabla \tilde{u}_{n}\|_{L^{\frac{10}{3}}_{x}}\| \tilde{u}_{n}  \|_{L^{10}_{x}}\| \tilde{u}_{n}\|_{L^{\frac{10}{3}}_{x}}.
\end{split}\]
Thus we infer that
\begin{equation}\label{Ker}
\|\nabla [(K\ast|\tilde{u}_{n}|^{2})\tilde{u}_{n}] \|_{L^{\frac{10}{7}}_{t,x}}\lesssim
\|  \nabla \tilde{u}_{n}\|_{L^{\frac{10}{3}}_{t,x}}\| \tilde{u}_{n}  \|_{L^{10}_{t,x}}\| \tilde{u}_{n}\|_{L^{\frac{10}{3}}_{t,x}}
\end{equation}
Similarly,
\begin{equation}\label{Ker22}
\|\nabla [|\tilde{u}_{n}|^{2}\tilde{u}_{n}] \|_{L^{\frac{10}{7}}_{t,x}}\lesssim
\|  \nabla \tilde{u}_{n}\|_{L^{\frac{10}{3}}_{t,x}}\| \tilde{u}_{n}  \|_{L^{10}_{t,x}}\| \tilde{u}_{n}\|_{L^{\frac{10}{3}}_{t,x}}.
\end{equation}
Combining \eqref{Ker}, \eqref{Ker22} and \eqref{Spli} we obtain
\begin{equation}\label{Inter1}
\begin{split}
\|\nabla   e \|_{L^{\frac{10}{7}}_{t, x}}\lesssim 
\|  \nabla \tilde{u}_{n}\|_{L^{\frac{10}{3}}_{t,x}}\| \tilde{u}_{n}  \|_{L^{10}_{t,x}}\| \tilde{u}_{n}\|_{L^{\frac{10}{3}}_{t,x}}
\lesssim C(\|\phi\|_{\dot{H}^{1}_{x}})\lambda^{1-\theta}_{n}\rightarrow0,
 \end{split}
\end{equation}
as $n\rightarrow\infty$. Thus, by \eqref{Spli}, \eqref{Inter1} and Lemma \ref{stabi} 
we see that for $n$ sufficiently large  there exists a unique global solution $u_{n}$ of \eqref{NLS} with initial data 
$u_{n}(0)=\phi_{n}$ such that \eqref{Csi} holds. 
Finally, \eqref{Small11} and \eqref{Small22}  can be proved along the same lines as Proposition 8.3 in \cite{KillipOhPoVi2017}.
\end{proof}

\begin{proof}[Proof of Proposition \ref{PSmc}] 
By translating $u_{n}$ in the time, we can have that $t_{n}\equiv 0$ in \eqref{Blowp2}.
First, by \eqref{Enl} we deduce that
\[\|u_{n}(0)\|^{2}_{H^{1}}  \lesssim \L(u_{n})  \lesssim \tau_{c}.
 \]
Using Theorem \ref{Profi} to $\left\{u_{n}(0)\right\}_{n\in \N}$ we write (after extracting a subsequence) for each
$J\leq J^{\ast}$,
\begin{equation}\label{Dpe}
u_{n}(0)=\sum^{J}_{j=1}\phi^{j}_{n}+W^{J}_{n}
\end{equation}
where the various sequences satisfy properties of Theorem \ref{Profi} and Lemma \ref{Lde}. Moreover, up to subsequence, we may assume that 
$M(u_{n})\rightarrow M_{0}$, $E(u_{n})\rightarrow E_{0}$ and therefore $\tau_{c}=\L(M_{0}, E_{0})$.
We define $\phi^{j}$ in the following form:
 \begin{itemize}
	\item For the case $\lambda^{j}_{n}\equiv 1$ and $t^{n}_{j}\rightarrow+\infty$ as $n\rightarrow\infty$, we define $\psi^{j}$  to be the global solution  of \eqref{NLS} which scatters to $e^{it\Delta}\phi^{j}$ when $t\rightarrow+\infty$. Similarly, if $\lambda^{j}_{n}\equiv 1$ and $t^{n}_{j}\rightarrow-\infty$ as $n\rightarrow\infty$, we define $\psi^{j}$  to be the global solution  of \eqref{NLS} which scatters to $e^{it\Delta}\phi^{j}$ when $t\rightarrow-\infty$. In either case, we define the global solution to \eqref{NLS}
\[\psi^{j}_{n}(t,x):=\psi^{j}(t+t^{j}_{n}, x-x^{j}_{n}).\]
\item For the case $\lambda^{j}_{n}\equiv 1$ and $t^{n}_{j}\equiv 0$, we define $\psi_{n}^{j}$ to be the global solution of \eqref{NLS}
	with the initial data $\psi_{n}^{j}(0)=\phi_{n}^{j}$.
	\item For the case $\lambda^{j}_{n}\rightarrow 0$ as $n\rightarrow\infty$, we define $\psi_{n}^{j}$ to be the global solution of \eqref{NLS}
	with the initial data $\psi_{n}^{j}(0)=\phi_{n}^{j}$  established in Lemma \ref{Lpsci}.
\end{itemize}
Therefore, associated to  $\phi_{n}^{j}$ we have a new nonlinear profile  $\psi_{n}^{j}(0)$ such that
	\begin{equation}\label{Apro11}
\|\psi_{n}^{j}(0)- \phi_{n}^{j}\|_{H^{1}_{x}}\rightarrow0,\quad \text{as $n\rightarrow\infty$}.
\end{equation}
 By the energy Pythagorean expansion \eqref{PE} and \eqref{PE11} we see that
\begin{align}\label{Pv1}
&\limsup_{n\rightarrow\infty}\sum^{J}_{j=1}M(\psi_{n}^{j})+M(W^{J}_{n})\leq M_{0},\\\label{Pv2}
&\limsup_{n\rightarrow\infty}\sum^{J}_{j=1}E(\psi_{n}^{j})+E(W^{J}_{n})\leq E_{0},
\end{align}
for any $J$. Since the profiles are non-trivial, from \eqref{Apro11} \eqref{Pv1} and inequality \eqref{Edp} implies that for $n$ big enough $E(\psi_{n}^{j})> 0$. And, again by \eqref{Pv1}, \eqref{Pv2} and \eqref{Edp} we obtain that  $E(W_{n}^{j})\geq 0$ for $n$ large.

From \eqref{Pv1} and \eqref{Pv2}, there are two scenarios to consider; we will show that Scenario 1 leads the compactness conclusion; on the other hand, we will show that Scenario 2  leads to contradiction and therefore does not occur.\\
 \textbf{Scenario 1:} \begin{equation}\label{IDS}
\sup_{j}\limsup_{n\rightarrow\infty}M(\psi_{n}^{j})=M_{0}\quad
\mbox {and} \quad \sup_{j}\limsup_{n\rightarrow\infty}E(\psi_{n}^{j})=E_{0}.
\end{equation}
Since $M(W^{J}_{n})\geq0$ and $E(W^{J}_{n})\geq0$, we have that $\limsup_{n\rightarrow\infty}E(\psi_{n}^{j})=E_{0}$ for some $j$.
We may have $j=1$ by reordering. Thus we see that $J=1$ and $W^{1}_{n}\rightarrow 0$ in $H_{x}^{1}$ as $n\rightarrow\infty$.
Indeed, since  $\limsup_{n\rightarrow\infty}E(W_{n}^{1})=0$ and $\limsup_{n\rightarrow\infty}M(W_{n}^{1})=0$, it follows from inequality \eqref{Enl} that $\limsup_{n\rightarrow\infty}\|W_{n}^{1}\|^{2}_{H^{1}}=0$. Therefore we have
\begin{equation}\label{Csc}
u_{n}(0)=\phi^{1}_{n}+W_{n}^{1}, \quad \lim_{n\rightarrow\infty}\|W_{n}^{1}\|^{2}_{H^{1}}=0.
\end{equation}
If $\lambda^{1}_{n}\equiv 0$ and $t^{1}_{n}\equiv 0$, then $u_{n}(0, \cdot+x^{1}_{n})\rightarrow \phi^{1}$ strongly in $H^{1}(\R^{3})$.
On the other hand, suppose that $\lambda^{1}_{n}\equiv 0$ and $t^{1}_{n}\rightarrow \infty$ as $n\rightarrow\infty$. By Sobolev embedding, Strichartz estimates, monotone convergence  theorem and \eqref{Csc} we infer that
\begin{equation}\label{fgc11}
\begin{split}
\| e^{i\Delta}u_{n}(0)  \|_{L^{10}_{t,x}[0,\infty)\times \R^{3}}\leq \| \phi^{1}_{n}  \|_{L^{10}_{t,x}[0,\infty)\times \R^{3}}+
\| W_{n}^{1} \|_{L^{10}_{t,x}[0,\infty)\times \R^{3}},\\
\lesssim
\| \phi^{1} \|_{L^{10}_{t,x}[t^{1}_{n},\infty)\times \R^{3}}+\|W_{n}^{1}\|_{H^{1}}\rightarrow0,
\end{split}
\end{equation}
as $n\rightarrow\infty$. Let $\epsilon>0$. We set $\tilde{u}:=e^{i\Delta}u_{n}(0)$ and $e:=-\lambda_{1}|\tilde{u}_{n}|^{2}\tilde{u}_{n}-\lambda_{2}(K\ast|\tilde{u}_{n}|^{2})\tilde{u}_{n}$. Then for $n$ sufficiently large, combining \eqref{Inter1} and \eqref{fgc11} we obtain
\[\| \tilde{u}  \|_{L^{10}_{t,x}[0,\infty)\times \R^{3}}+ \| \nabla e  \|_{L^{10}_{t,x}[0,\infty)\times \R^{3}}\leq \epsilon.\]
From \eqref{Csc}, we may apply the Lemma \ref{stabi} to obtain that for $n$ sufficiently large $\|u_{n} \|_{L^{10}_{t,x}([0,\infty)\times \R^{3})}$ is negligible, a contradiction with \eqref{Blowp2}. An analogous argument holds when  $\lambda^{1}_{n}\equiv 0$ and $t^{1}_{n}\rightarrow -\infty$ as $n\rightarrow\infty$ 
Finally, assume that $\lambda^{1}_{n}\rightarrow 0$  as $n\rightarrow\infty$. From  \eqref{Csc} we have
\begin{equation}\label{Apcv}
\|\psi_{n}^{1}(0)- u_{n}(0)\|_{H^{1}_{x}}\rightarrow0,\quad \text{as $n\rightarrow\infty$}.
\end{equation}
By Proposition \ref{Lpsci}, \eqref{Csi}, \eqref{Apcv} and Lemma \ref{stabi} (perturbation theory) we infer that 
$\| \nabla u_{n} \|_{S^{0}(\R)}$ is uniformly limited for large $n$, which is a contradiction with our hypothesis \eqref{Blowp2}.

\textbf{Scenario 2:} On the contrary,  we assume that  \eqref{IDS} fails for all $j$. Then there exists $\delta>0$ such that
\begin{equation}\label{deltaco}
\sup_{j}\limsup_{n\rightarrow\infty}M(\psi_{n}^{j})\leq M_{0}-\delta\quad
\mbox {or} \quad \sup_{j}\limsup_{n\rightarrow\infty}E(\psi_{n}^{j})\leq E_{0}-\delta.
\end{equation}
Now the idea of the proof is approximate
\[u_{n}(t)\approx \sum^{J}_{j=1}\psi^{j}_{n}(t)+e^{it\Delta}W^{J}_{n},\]
under tree cases: $\lambda^{j}_{n}\equiv 1$ and $t^{n}_{j}\rightarrow\pm\infty$ as $n\rightarrow\infty$; $\lambda^{j}_{n}\equiv 1$ and $t^{n}_{j}\equiv 0$; $\lambda^{j}_{n}\rightarrow 0$ as $n\rightarrow\infty$, and we use perturbation argument (Lemma \ref{stabi}) to obtain a contradiction with \eqref{Blowp2}.
 With this in mind, we define
\[u^{J}_{n}(t):=\sum^{J}_{j=1}\psi^{j}_{n}(t)+e^{it\Delta}W^{J}_{n}.
\]
Now consider the equation
\[ (i\partial_{t}+\Delta) {u}^{J}_{n}=\lambda_{1}|{u}^{J}_{n}|^{2}{u}^{J}_{n}+\lambda_{2}(K\ast|{u}^{J}_{n}|^{2}){u}^{J}_{n}+|{u}^{J}_{n}|^{4}{u}^{J}_{n}+e^{J}_{n},   \]
where the ``error'' $e^{J}_{n}$ is given by $e^{J}_{n}:=(i\partial_{t}+\Delta) {u}^{J}_{n}-\lambda_{1}|{u}^{J}_{n}|^{2}{u}^{J}_{n}-\lambda_{2}(K\ast|{u}^{J}_{n}|^{2}){u}^{J}_{n}+|{u}^{J}_{n}|^{4}{u}^{J}_{n}$.
We want to apply Lemma \ref{stabi}. We assume the following two claims for a moment to conclude the proof.\\
\textbf{Claim I.} We have the following global space bound
\begin{equation}\label{Gstb}
\sup_{J}\limsup_{n\rightarrow\infty}\left[ \| {u}^{J}_{n}  \|_{L^{10}_{t,x}(\R\times \R^{3})} 
+\| {u}^{J}_{n}  \|_{L_{t}^{\frac{10}{3}}H^{1,\frac{10}{3}}_{x}(\R\times \R^{3})} \right]\lesssim_{\tau_{c},\delta} 1.
\end{equation}
\textbf{Claim II.} Let $\epsilon>0$. For $J$ big enough (depending on $\epsilon$) we have
\begin{equation}\label{Eimpo}
\limsup_{n\rightarrow\infty}\|\nabla  e^{J}_{n}  \|_{L^{\frac{10}{7}}_{x}(\R\times \R^{3})}\leq\epsilon.
\end{equation}
Now from \eqref{Apro11} and \eqref{Dpe} we get
\begin{equation}\label{Aps}
\|u^{J}_{n}(0)-\psi_{n}^{j}(0)\|_{H^{1}_{x}}\rightarrow0,\quad \text{as $n\rightarrow\infty$}.
\end{equation}
Moreover, by interpolation inequality we obtain
\[
\begin{split}
\| \<\nabla\> (|u^{J}_{n}|^{4})u^{J}_{n}  \|_{L^{\frac{10}{7}}_{t,x}}&\lesssim \| u^{J}_{n}  \|^{4}_{L^{10}_{t,x}}\| \<\nabla\> u^{J}_{n}  \|_{L^{\frac{10}{3}}_{t,x}}\\
\| \<\nabla\> (|u^{J}_{n}|^{2})u^{J}_{n}  \|_{L^{\frac{10}{7}}_{t,x}}&
\lesssim \| u^{J}_{n}  \|_{L^{10}_{t,x}}\| u^{J}_{n}  \|_{L^{\frac{10}{3}}_{t,x}}\| \<\nabla\> u^{J}_{n}  \|_{L^{\frac{10}{3}}_{t,x}}\\
\| \<\nabla\> (K\ast|u^{J}_{n}|^{2})u^{J}_{n}  \|_{L^{\frac{10}{7}}_{t,x}}&
\lesssim \|u^{J}_{n}\|_{L^{10}_{t,x}}\| u^{J}_{n}  \|_{L^{\frac{10}{3}}_{t,x}}\| \<\nabla\> u^{J}_{n}  \|_{L^{\frac{10}{3}}_{t,x}},\\
\end{split}
\]
where all space-time norms will be taken over $\R\times\R^{3}$.
Thus, using the Duhamel formula and estimating via Stricharzt  we have
\begin{equation}\label{DS2}
\begin{split}
\| u^{J}_{n}  \|_{L_{t}^{\infty}L^{2}_{x}(\R\times\R^{3})}\lesssim 
\|u^{J}_{n}\|_{L^{10}_{t,x}(\R\times\R^{3})}\| u^{J}_{n}  \|_{L^{\frac{10}{3}}_{t,x}(\R\times\R^{3})}\| \<\nabla\> u^{J}_{n}  \|_{L^{\frac{10}{3}}_{x}(\R\times\R^{3})}\\
+\| u^{J}_{n}  \|^{4}_{L^{10}_{t,x}}\| \<\nabla\> u^{J}_{n}  \|_{L^{\frac{10}{3}}_{x}(\R\times\R^{3})}
+\|\nabla e^{J}_{n}\|_{L^{\frac{10}{7}}_{x}(\R\times\R^{3})}.
\end{split}
\end{equation}
Combining \eqref{Gstb}, \eqref{Eimpo} and \eqref{DS2} we have for $J$ sufficiently large
\begin{equation}\label{Fis2}
\limsup_{n\rightarrow\infty}\| u^{J}_{n}  \|_{L_{t}^{\infty}L^{2}_{x}(\R\times\R^{3})}\lesssim_{\tau_{c},\epsilon,\delta}1.
\end{equation}
Applying \eqref{Fis2}, \eqref{Gstb} , \eqref{Aps}, \eqref{Eimpo} to Lemma \ref{stabi} gives $\| u_{n}  \|_{L^{10}_{t,x}}\lesssim_{\tau_{c},\epsilon,\delta}1$ for  $n$ sufficiently large, which a contradiction with \eqref{Blowp2}.

Therefore, it remains to establish the above claims. Indeed, using Lemma \ref{Lel} (v) and \eqref{deltaco}, for each finite $J\leq J^{\ast}$, we see that for $n$ sufficiently large $M(\psi^{j}_{n})\leq M_{0}-\delta/2$ or $E(\psi^{j}_{n})\leq E_{0}-\delta/2$ and $1\leq j\leq J$. Thus
we get that there exists $\epsilon_{1}=\epsilon_{1}(\delta)>0$ such that $\L(\psi^{j}_{n})\leq \tau_{c}-\epsilon_{1}$ for $n$ sufficiently large. Then by definition of $\tau_{c}$ given by \eqref{tcri} we infer that
\begin{equation}\label{Udez}
\| \psi^{j}_{n}  \|_{L^{10}_{t,x}}\lesssim_{\delta, \tau_{c}} 1.
\end{equation}
Thus, from inequalities \eqref{Gbc}, \eqref{S1} and \eqref{Edp} we see that
\begin{equation}\label{Estrii}
\| \psi^{j}_{n}  \|_{L^{10}_{t,x}(\R\times\R^{3})}+\|\nabla  \psi^{j}_{n}\|_{L^{\frac{10}{3}}_{t,x}(\R\times\R^{3})}
\lesssim_{\delta, \tau_{c}} \|\nabla \psi^{j}_{n}(0)\|_{L^{2}_{t,x}(\R\times\R^{3})}\lesssim_{\delta, \tau_{c}} 
E(\psi^{j}_{n})^{1/2}
\end{equation}
and
\begin{equation}\label{masses}
\|  \psi^{j}_{n}\|_{L^{\frac{10}{3}}_{t,x}(\R\times\R^{3})}\lesssim_{\delta, \tau_{c}}
M(\psi^{j}_{n})^{1/2}.
\end{equation}
\begin{proof}[Proof of Claim I]  
Throughout the proof, all space-time norms will be taken over $\R\times\R^{3}$. We fix $J\leq J^{\ast}$. First, by \eqref{Estrii}, Lemma \ref{Lpsci}
and following the same argument developed in \cite[Lemma 9.2]{KillipOhPoVi2017} we have for $j\neq k$
\begin{equation}\label{Apliq}
\lim_{n\rightarrow\infty}[\|  \psi^{j}_{n}   \psi^{k}_{n}  \|_{L^{5}_{t,x}}+
\|  \psi^{j}_{n}  \nabla \psi^{k}_{n}  \|_{L^{\frac{5}{2}}_{t,x}}+\| \nabla \psi^{j}_{n}  \nabla \psi^{k}_{n}  \|_{L^{\frac{5}{3}}_{t,x}}
+\|  \psi^{j}_{n}   \psi^{k}_{n}  \|_{L^{\frac{5}{3}}_{t,x}}]=0.
\end{equation}
Moreover, we also have for $j\neq k$
\begin{equation}\label{Cs2}
\lim_{n\rightarrow\infty}\|  \psi^{j}_{n}   \psi^{k}_{n}  \|_{L^{\frac{5}{2}}_{t,x}}=0.
\end{equation}
Indeed, if $\lambda^{j}_{n}\equiv 1$ and  $\lambda^{k}_{n}\equiv 1$, then using the same argument as in Lemma 9.2 in 
\cite{KillipOhPoVi2017} again, yields \eqref{Cs2}. Now, in the case when $\lambda^{k}_{n}\rightarrow 0$, by \eqref{Apro11}, \eqref{masses}, Bernstein inequality and definition of 
$\phi_{n}^{j} $ given by \eqref{fucti} we get
\[ \|  \psi^{k}_{n}  \|_{L^{\frac{10}{3}}_{t,x}}\lesssim_{\delta, \tau_{c}} \|  \psi^{k}_{n} (0)  \|_{L^{2}_{x}} 
\lesssim_{\delta, \tau_{c}} (\lambda^{k}_{n})^{1-\theta}\|\nabla \phi^{j}  \|_{L^{2}_{x}}+\| \psi^{k}_{n} (0)- \phi^{k}_{n} \|_{L^{}_{x}}
\rightarrow0,
 \]
as $n\rightarrow\infty$. And analogous argument holds for $\lambda^{j}_{n}\rightarrow 0$. Using \eqref{Udez}, inequality above and interpolation  we obtain
\[ 
\|  \psi^{j}_{n}   \psi^{k}_{n}  \|_{L^{\frac{5}{2}}_{t,x}}\lesssim \|  \psi^{j}_{n}  \|_{L^{10}_{t,x}}
\| \psi^{k}_{n}  \|_{L^{\frac{10}{3}}_{t,x}}\rightarrow0\quad  \mbox{as $n\rightarrow\infty$},
\]
which yields \eqref{Cs2}. 

From \eqref{Apliq},\eqref{masses}, \eqref{Pv1} and Stricharzt estimates we get
\begin{equation}\label{Cla11}
\begin{split}
\| u^{J}_{n}  \|^{2}_{L^{\frac{10}{3}}_{t,x}}\lesssim \sum^{J}_{j=1}\| \psi^{j}_{n}  \|^{2}_{L^{\frac{10}{3}}_{t,x}}+
\sum_{j\neq k}\| \psi^{j}_{n} \psi^{k}_{n}  \|^{2}_{L^{\frac{5}{3}}_{t,x}}+
\| W^{J}_{n} \|^{2}_{L^{2}_{x}}\\
\lesssim_{\delta, \tau_{c}}\sum^{J}_{j=1}M( \psi^{j}_{n} )+\sum_{j\neq k}o(1)+M( W^{J}_{n})\lesssim_{\delta, \tau_{c}} 1+J^{2}o(1),
\end{split}
\end{equation}
as $n\rightarrow\infty$. Similarly, by \eqref{Apliq},\eqref{Estrii}, \eqref{Edp} and \eqref{Pv2}  we obtain
\begin{equation}\label{Cla22}
\begin{split}
\| u^{J}_{n}  \|^{2}_{L^{10}_{t,x}}+\| u^{J}_{n}  \|^{2}_{L^{\frac{10}{3}}_{x}}\lesssim_{\delta, \tau_{c}}\sum^{J}_{j=1}E( \psi^{j}_{n} )+\sum_{j\neq k}o(1)+E( W^{J}_{n})\\
\lesssim_{\delta, \tau_{c}} 1+J^{2}o(1), \quad \mbox{as $n\rightarrow\infty$}.
\end{split}
\end{equation}
Combining \eqref{Cla11} and \eqref{Cla22} yields \eqref{Gstb}.
\end{proof}
\begin{proof}[Proof of Claim II]
Since $\psi^{j}_{n}$ is solution of \eqref{NLS} we have
\begin{align}\label{Fd1}
e^{J}_{n}&=\sum^{J}_{j=1}F(\psi^{j}_{n}) -F(\sum^{J}_{j=1}\psi^{j}_{n})\\\label{Fd22}
&=F(u^{J}_{n}-e^{it\Delta }W^{J}_{n})-F( u^{J}_{n} ),
\end{align}
where $F(u)=F_{1}(u)+F_{2}(u)$, with $F_{1}(u)=\lambda_{2}(K\ast|u|^{2})u$ and $F_{2}(u)=\lambda_{1}|u|^{2}u+|u|^{4}u$.
By interpolation we have,
\begin{equation}\label{F1K}
\begin{split}
\|\sum^{J}_{j=1}F_{1}(\psi^{j}_{n}) -F_{1}(\sum^{J}_{j=1}\psi^{j}_{n})\|_{L^{\frac{10}{7}}_{t,x}}
\lesssim_{J} 
\sum_{j\neq k}\|[K\ast\nabla(\psi^{j}_{n}\overline{\psi}^{k}_{n})]\psi^{j}_{n}  \|_{L^{\frac{10}{7}}_{t,x}}\\
+\sum_{j\neq k}\| [K\ast(\psi^{j}_{n}\overline{\psi}^{k}_{n})]\nabla\psi^{j}_{n}  \|_{L^{\frac{10}{7}}_{t,x}}\\
\lesssim_{J}\sum_{j\neq k}[ \|  \psi^{j}_{n}  \|_{L^{\frac{10}{3}}_{t,x}}\|  \psi^{k}_{n} \nabla \psi^{k}_{n} \|_{L^{\frac{5}{2}}_{x}}+
\| \nabla \psi^{j}_{n}  \|_{L^{\frac{10}{3}}_{t,x}}\|  \psi^{j}_{n} \psi^{k}_{n}  \|_{L^{\frac{5}{2}}_{t,x}}].
\end{split}
\end{equation}
Similarly,
\begin{equation}\label{F122K}
\begin{split}
\|\sum^{J}_{j=1}F_{2}(\psi^{j}_{n})-F_{2}(\sum^{J}_{j=1}\psi^{j}_{n})\|_{L^{\frac{10}{7}}_{t,x}}
\lesssim_{J} 
\sum_{j\neq k}[\|  \psi^{k}_{n}  \|^{3}_{L^{10}_{t,x}}\|  \psi^{j}_{n} \nabla\psi^{k}_{n}  \|_{L^{\frac{5}{2}}_{t,x}}\\
+
\|  \psi^{j}_{n}  \|^{3}_{L^{10}_{t,x}}\|  \psi^{k}_{n} \nabla\psi^{j}_{n}  \|_{L^{\frac{5}{2}}_{t,x}}+
\|  \psi^{k}_{n}  \|_{L^{\frac{10}{3}}_{t,x}}\|  \psi^{j}_{n} \nabla\psi^{k}_{n}  \|_{L^{\frac{5}{2}}_{t,x}}+
\|  \psi^{j}_{n}  \|_{L^{\frac{10}{3}}_{t,x}}\|  \psi^{k}_{n} \nabla\psi^{j}_{n}  \|_{L^{\frac{5}{2}}_{t,x}}].
\end{split}
\end{equation}
Combining \eqref{Estrii}, \eqref{masses}, \eqref{Apliq}, \eqref{F1K}, \eqref{Cs2} and \eqref{F122K} we have
\begin{equation}\label{Clm}
\limsup_{n\rightarrow\infty}
\| \eqref{Fd1} \|_{L^{\frac{10}{7}}_{t, x}}\lesssim_{J,\delta,\tau_{c}}o(1)\quad, \mbox{as $n\rightarrow\infty$}.
\end{equation}
On the other hand, by interpolation
\begin{equation}\label{F2L}
\begin{split}
\| \eqref{Fd22}  \|_{L^{\frac{10}{7}}_{x}}
\lesssim
\|  u^{J}_{n} \|_{L^{\frac{10}{3}}_{x}}\| u^{J}_{n} \nabla e^{it\Delta}W^{J}_{n} \|_{L^{}_{x}}+
\| \nabla u^{J}_{n} \|_{L^{\frac{10}{3}}_{x}}\|  e^{it\Delta}W^{J}_{n}  \|_{L^{10}_{x}}\|  u^{J}_{n} \|_{L^{\frac{10}{3}}_{x}}+
\\
\|  u^{J}_{n} \|_{L^{\frac{10}{3}}_{x}}\|  e^{it\Delta}W^{J}_{n}  \|_{L^{10}_{x}}\|\nabla  e^{it\Delta}W^{J}_{n}  \|_{L^{\frac{10}{3}}_{x}}+
\|  e^{it\Delta}W^{J}_{n} \|_{L^{\frac{10}{3}}_{x}}\|  e^{it\Delta}W^{J}_{n} \|_{L^{10}_{x}}\|\nabla  e^{it\Delta}W^{J}_{n}  \|_{L^{\frac{10}{3}}_{x}}+
\\
\| \nabla u^{J}_{n} \|_{L^{\frac{10}{3}}_{x}}\|  e^{it\Delta}W^{J}_{n}  \|_{L^{10}_{x}}\|u^{J}_{n}  \|^{3}_{L^{10}_{x}}
\| \nabla u^{J}_{n} \|_{L^{\frac{10}{3}}_{x}}\|  e^{it\Delta}W^{J}_{n}  \|_{L^{10}_{x}}\|
 e^{it\Delta}W^{J}_{n} \|^{3}_{L^{10}_{x}}\\
+\| K\ast|u^{J}_{n}|^{2} \nabla e^{it\Delta}W^{J}_{n} \|_{L^{\frac{10}{7}}_{x}}+
\|  u^{J}_{n} \|^{3}_{L^{\frac{10}{3}}_{x}}\| u^{J}_{n}  e^{it\Delta}W^{J}_{n} \|_{L^{\frac{5}{2}}_{x}}+
\\
\|  e^{it\Delta}W^{J}_{n}  \|^{4}_{L^{10}_{x}}\| \nabla u^{J}_{n} \|_{L^{\frac{10}{3}}_{x}}+
\|  e^{it\Delta}W^{J}_{n}  \|^{4}_{L^{10}_{x}}\| \nabla e^{it\Delta}W^{J}_{n} \|_{L^{\frac{10}{3}}_{x}}.
\end{split}
\end{equation}
Notice that 
\begin{align}\label{Ca1}
&\lim_{J\rightarrow J^{\ast}}\limsup_{n\rightarrow\infty}\|u^{J}_{n} \nabla e^{it\Delta}W^{J}_{n}\|_{L^{^{\frac{5}{2}}}_{x}}
=0.
\end{align}
Such statement can be proved along the same lines as Lemma 9.5 in \cite{KillipOhPoVi2017}. Now we show that
\begin{align}\label{SUPK}
&\lim_{J\rightarrow J^{\ast}}\limsup_{n\rightarrow\infty}\| K\ast|u^{J}_{n}|^{2} \nabla e^{it\Delta}W^{J}_{n} \|_{L^{\frac{10}{7}}_{x}}=0.
\end{align}
By orthogonality we have that for $j\not=k$, 
\begin{align*}
  \| K\ast\(\psi^j_{n}\bar \psi^k_n\) \nabla e^{it\Delta}W^{J}_{n}
  \|_{L^{\frac{10}{7}}_{t,x}}&\le \| K\ast\(\psi^j_{n}\bar \psi^k_n\)
  \|_{L^{\frac{5}{2}}_{t,x}} \|\nabla e^{it\Delta}W^{J}_{n}
                               \|_{L^{\frac{10}{3}}_{t,x}}\\
  &\lesssim \|\psi^j_{n}\bar \psi^k_n
  \|_{L^{\frac{5}{2}}_{t,x}} \|\nabla e^{it\Delta}W^{J}_{n}
  \|_{L^{\frac{10}{3}}_{t,x}}\rightarrow 0,
\end{align*}
as $n\to\infty$, where we have used Lemma~\ref{LLk} and \eqref{Cs2}. This implies by triangle inequality, H\"older inequality, \eqref{Sr}, \eqref{masses} and \eqref{Pv1}
\[\begin{split}
\lim_{J\rightarrow J^{\ast}}\limsup_{n\rightarrow\infty}\| K\ast|u^{J}_{n}|^{2} \nabla e^{it\Delta}W^{J}_{n} \|_{L^{\frac{10}{7}}_{x}}
\lesssim
\lim_{J\rightarrow J^{\ast}}\limsup_{n\rightarrow\infty}\|  K\ast\Big(\sum^{J}_{j=1}|\psi^j_{n}|^{2}\Big)\nabla e^{it\Delta}W^{J}_{n} \|_{L^{\frac{10}{7}}_{x}}\\
+\lim_{J\rightarrow J^{\ast}}\limsup_{n\rightarrow\infty}\sum^{J}_{j=1}\| \psi^j_{n}  \|_{L_{t,x}^{\frac{10}{3}}}\|  e^{it\Delta}W^{J}_{n}  \|_{L^{10}_{x}}
\| \nabla e^{it\Delta}W^{J}_{n}  \|_{L^{\frac{10}{3}}_{x}}\\
+\lim_{J\rightarrow J^{\ast}}\limsup_{n\rightarrow\infty}\|  e^{it\Delta}W^{J}_{n}  \|_{L^{10}_{x}}\|  e^{it\Delta}W^{J}_{n}  \|_{L^{\frac{10}{3}}_{x}}
\| \nabla e^{it\Delta}W^{J}_{n}  \|_{L^{\frac{10}{3}}_{x}}\\
\lesssim
\lim_{J\rightarrow J^{\ast}}\limsup_{n\rightarrow\infty}\|  K\ast\Big(\sum^{J}_{j=1}|\psi^j_{n}|^{2}\Big)\nabla e^{it\Delta}W^{J}_{n} \|_{L^{\frac{10}{7}}_{x}}.
\end{split}\]
On the other hand, by H\"older inequality, \eqref{Cs2} \eqref{masses}, \eqref{Pv1} and \eqref{Estrii} we get
\[\begin{split}
\| \sum^{J}_{j=J^{'}} |\psi^j_{n}|^{2} \|^{2}_{L^{\frac{5}{2}}_{t,x}}\lesssim
\sum_{j\neq k}\| \psi^j_{n}\psi^k_{n}\|^{2}_{L^{\frac{5}{2}}_{x}}+
 \sum^{J}_{j=J^{'}}\| |\psi^j_{n}|^{2}\|^{2}_{L^{\frac{5}{2}}_{x}}\\
\lesssim \sum_{j\neq k}o(1)+ M_{0}\sum^{J}_{j=J^{'}}E(\psi^j_{n}), \quad \mbox{as $n\to \infty$}.
\end{split}\]
Thus, by \eqref{Pv2}  we infer that for any $\epsilon_{1}>0$ there exists $J^{'}=J^{'}(\epsilon_{1})\in \N$ such that
\[ \limsup_{n\rightarrow\infty}\| \sum^{J}_{j=J^{'}} |\psi^j_{n}|^{2} \|_{L^{\frac{5}{2}}_{t,x}}\leq  \epsilon_{1},\]
for all $J\leq J^{\ast}$. Then, from H\"older inequality, Lemma~\ref{LLk} and \eqref{Pv2} we get
\[
\lim_{J\rightarrow J^{\ast}}\limsup_{n\rightarrow\infty}\|  K\ast\Big(\sum^{J}_{j=J^{'}}|\psi^j_{n}|^{2}\Big)\nabla e^{it\Delta}W^{J}_{n} \|_{L^{\frac{10}{7}}_{x}}\lesssim \lim_{J\rightarrow J^{\ast}}\limsup_{n\rightarrow\infty}\| \sum^{J}_{j=J^{'}} |\psi^j_{n}|^{2} \|_{L^{\frac{5}{2}}_{t,x}} \lesssim \epsilon_{1}.
\]
Therefore, to prove \eqref{SUPK} it is enough to show that  
\[\lim_{J\rightarrow J^{\ast}}\limsup_{n\rightarrow\infty}\| K\ast|\psi^j_{n}|^{2} \nabla e^{it\Delta}W^{J}_{n}
  \|_{L^{\frac{10}{7}}_{t,x}}=0, \quad \text{for all $j\in \left\{1,2,\ldots, J^{'}\right\}$.}
\]
First we consider the case 	$\lambda_n^j\to 0$ as $n\to
\infty$. Thus we fix $1\leq j\leq J^{'}$ (notice that $J^{'}$ is
finite) with $\lambda_n^j\to 0$ as $n\to \infty$. Proceeding like in \cite{KillipOhPoVi2017}, by Lemma~\ref{Lpsci} we infer that for $\epsilon>0$ sufficiently small there exists $\zeta^{j}_{\epsilon}\in C^{\infty}_{c}(\R\times\R^{3})$ such that
\begin{equation}\label{phyap-bis}
\left\|\psi^j_{n}-\frac{1}{(\lambda^{j}_{n})^{{1/2}}}\zeta^{j}_{\epsilon}\Big(
  \frac{t}{(\lambda_n^j)^2} +t_n^j,\frac{x-x_n^j}{\lambda_n^j}\Big)
\right\|_{L^{10}_{t,x}}<\epsilon. 
\end{equation}
Using the fact that $K$ is homogeneous of degree $-3$, from
Lemma~\ref{LLk} we see that there exists $C$ such that 
\begin{equation}\label{deps-bis}
  \left\|K\ast|\psi^j_{n}|^{2} - \frac{1}{\lambda_n^j} K\ast
  |\zeta^{j}_{\epsilon}|^2 \Big( \frac{t}{(\lambda_n^j)^2}
  +t_n^j,\frac{x-x_n^j}{\lambda_n^j}\Big)\right\|_{L^{5}_{t,x}}\le C\epsilon.
\end{equation}
 Now the function $K\ast|\zeta^{j}_{\epsilon}|^2$ is compactly supported in time, and in $L^p_x(\R^{3})$ for all $p\in (1,\infty)$. Therefore, 
	there exists $\chi^j_\epsilon $ compactly supported in space-time such that
  \begin{equation}\label{Koz-bis}
	\| K\ast|\zeta^{j}_{\epsilon}|^2-\chi^j_\epsilon \|_{L^{5}_{t,x}}<\epsilon.
  \end{equation}
  In particular,
 \begin{equation}\label{newq-bis}
  \left\|K\ast|\psi^j_{n}|^{2} -
    (\lambda_n^j)^{-1}\chi^j_\epsilon \Big(
    \frac{t}{(\lambda_n^j)^2}
    +t_n^j,\frac{x-x_n^j}{\lambda_n^j}\Big)\right\|_{L^{5}_{t,x}}\lesssim
  \epsilon, 
\end{equation}
for $n$ sufficiently large. We set 
$\tilde{W}^{J}_{n}(t,x):=(\lambda^{j}_{n})^{\frac{1}{2}}[e^{i\Delta t}W^{J}_{n}]( (\lambda^{j}_{n})^{2}(t-t^{j}_{n}), \lambda^{j}_{n}x+x^{j}_{n})$.
Applying \cite[Lemma 1.8]{KillipOhPoVi2017} and commuting the dilation with the propagator, via H\"older inequality we obtain
\begin{equation}\label{clpe}
\|K\ast|\psi^j_{n}|^{2} \nabla e^{it\Delta}W^{J}_{n} \|_{L^{\frac{10}{7}}_{t,x}}
\lesssim \epsilon \| \nabla e^{it\Delta}W^{J}_{n}  \|_{L^{\frac{10}{3}}_{t,x}}
 + \|\chi^j_\epsilon  \nabla \tilde{W}^{J}_{n}\|_{L^{\frac{10}{7}}_{t,x}}.
\end{equation}
For the last term, H\"older inequality yields
\begin{equation}\label{fgc}
  \|\chi^j_\epsilon  \nabla
  \tilde{W}^{J}_{n}\|_{L^{\frac{10}{7}}_{t,x}}\le \|\chi^j_\epsilon
 \|_{L^{\frac{10}{3}}_{t,x}}\|
  \nabla \tilde{W}^{J}_{n}\|_{L^{\frac{10}{3}}_{t,x}}^{1/2}
  \| \nabla
  \tilde{W}^{J}_{n}\|_{L^{2}_{t,x}(\operatorname{supp} \chi^j_\epsilon)}^{1/2}.
\end{equation}
The quantity $\|\chi^j_\epsilon
 \|_{L^{\frac{10}{3}}_{t,x}}$ is finite since $\chi^j_\epsilon$ is
 compactly supported in space-time, with an implicit dependence upon
 $\epsilon$. 
Finally, since $\epsilon>0$ is arbitrary, by \eqref{clpe}-\eqref{fgc} and \eqref{Sr} we obtain the claim \eqref{SUPK}.
The other case $\lambda^{j}_{n}\equiv 1$ can be dealt with similarity.

Combining \eqref{Sr}, \eqref{Gstb}, \eqref{F2L} and  \eqref{Ca1}-\eqref{SUPK} we have
\begin{equation}\label{Vbn}
\lim_{J\rightarrow J^{\ast}}\limsup_{n\rightarrow\infty}\| \eqref{Fd22}  \|_{L^{\frac{10}{7}}_{x}}=0.
\end{equation}
Finally, \eqref{Eimpo} follows by \eqref{Clm} and \eqref{Vbn}.
\end{proof}

Thus completes the proof of Proposition \ref{PSmc}.
\end{proof}

\section{Extinction of the critical element}\label{Su8}
In this section we will apply the localized virial identities to preclude the critical element $u_{c}$ given in Theorem \ref{ECEC}.
We begin with the following result.

\begin{proposition}\label{BuIo}
Let $u_{c}(t)$ be the critical solution in Theorem \ref{ECEC}.
The following properties hold:\\
{\rm (i)(Precompacness of flow)} There exists a continuous path $x(t)$ in $\R^{3}$ such that $\left\{u_{c}(\cdot+x(t))\right\}$ is precompact
in $H^{1}(\R^{3})$.\\
{\rm(ii) (Uniform localization)} For every $\epsilon>0$ there exists $C(\epsilon)>0$ such that
\begin{equation}\label{Unflo}
\sup_{t\in \R}\int_{|x-x(t)|>C(\epsilon)}|\nabla u(x,t)|^{2}+| u(x,t)|^{2}+| u(x,t)|^{4}+| u(x,t)|^{6}dx\leq \epsilon.
\end{equation}
{\rm (iii)(Zero momentum of $u_{c}$)} The conserved momentum $P(u_{c}(t))=\int_{\R^{3}}2\IM(\overline{u}_{c}(t)\nabla u_{c}(t))dx$ is zero.\\
{\rm (iv)(Control of the spatial translation $x(t)$)} The spacial center function $x(t)$ in Item (i) satisfies
\[\left|\frac{x(t)}{t}\right|\rightarrow 0, \quad \text{as $t\rightarrow\pm\infty$}.\]
{\rm(v)(Uniform bound for $I(u_{c}(t))$)} There exists $\eta>0$ such that
\begin{equation}\label{ubv}
\begin{split}
I(u_{c}(t))=\|\nabla u_{c}(t)\|^{2}_{L^{2}}+\|u_{c}(t)\|^{6}_{L^{6}}+\frac{3}{4}\lambda_{1}\|u_{c}(t)\|^{4}_{L^{4}}+
\\
+\frac{3}{4}\lambda_{2}\|(K\ast|u_{c}(t)|^{2})|u_{c}(t)|^{2}\|_{L^{1}}\geq\eta, \quad \mbox{for all $t\in \R$}.
\end{split}
\end{equation}
\end{proposition}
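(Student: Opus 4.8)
The plan is to derive (i), (ii) and (v) from the precompactness modulo translations already furnished by Theorem \ref{ECEC}, to obtain (iii) from a Galilean boost combined with the extremality of $\tau_c$, and to prove (iv)---which I expect to be the main obstacle---by a truncated center-of-mass estimate resting on (ii) and (iii).

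For (i) and (ii): Theorem \ref{ECEC} provides, for each $t$, a translation parameter $x(t)\in\R^{3}$ such that $\{u_c(\cdot+x(t)):t\in\R\}$ has compact closure in $H^{1}(\R^{3})$; since $u_c\in C(\R,H^{1})$, a by-now routine argument allows one to choose $x(\cdot)$ continuous, which is (i). Compactness then yields tightness: covering this set by finitely many $H^{1}$-balls of radius $\epsilon/2$ about $v_1,\dots,v_N$ and taking $C(\epsilon)$ so large that $\|v_i\|_{H^{1}(|x|>C(\epsilon))}<\epsilon/2$ for all $i$ gives $\|u_c(t)\|^{2}_{H^{1}(|x-x(t)|>C(\epsilon))}\lesssim\epsilon$ uniformly in $t$, and the $L^{4}$ and $L^{6}$ tails are then controlled by the $\dot H^{1}$ tail via a cut-off Sobolev inequality and interpolation, proving \eqref{Unflo}. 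For (v): from $0<\L(u_c)=\tau_c<\infty$ and Lemma \ref{Lel}(i) we get $(M(u_c),E(u_c))\in\K$, and since $\L$---hence $M$ and $E$---is conserved, $(M(u_c(t)),E(u_c(t)))\in\K$ for all $t$; picking $t_n$ with $I(u_c(t_n))\to\inf_{t\in\R}I(u_c(t))$, extracting by the precompactness of (i) a subsequence of $u_c(\cdot+x(t_n))$ converging in $H^{1}$ to some $v$, and using the $H^{1}$-continuity of $M$, $E$ and of the translation-invariant functional $I$ (each summand of \eqref{Virial}, including the nonlocal one, being continuous on $H^{1}$ by Lemma \ref{LLk} and Sobolev embedding) gives $(M(v),E(v))\in\K$ and $I(v)=\inf_{t}I(u_c(t))$; Theorem \ref{SK}(i) then forces $I(v)>0$, so $\eta:=I(v)$ satisfies \eqref{ubv}.

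For (iii), I argue by contradiction. If $P(u_c)\neq0$, then for $\xi\in\R^{3}$ the Galilean transform $u_\xi(t,x):=e^{i(x\cdot\xi-t|\xi|^{2})}u_c(t,x-2t\xi)$ is again a global solution of \eqref{NLS}---the nonlocal term only feels the spatial translate $|u_c(t,\cdot-2t\xi)|^{2}$---with $\|u_\xi\|_{L^{10}_{t,x}}=\|u_c\|_{L^{10}_{t,x}}=\infty$, $M(u_\xi)=M(u_c)$ and $E(u_\xi)=E(u_c)+\frac{1}{2}\,\xi\cdot P(u_c)+\frac{1}{2}|\xi|^{2}M(u_c)$. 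Minimizing the right-hand side at $\xi_\ast=-P(u_c)/(2M(u_c))$ gives $E(u_{\xi_\ast})=E(u_c)-|P(u_c)|^{2}/(8M(u_c))<E(u_c)$, while $E(u_{\xi_\ast})>0$ by \eqref{Eii} (since $u_{\xi_\ast}\neq0$ and $M(u_{\xi_\ast})<M(Q_{1})$); hence $(M(u_{\xi_\ast}),E(u_{\xi_\ast}))\in\K$. As $M$ is unchanged and $E$ strictly decreases, while $\mbox{dist}\bigl((M(u_{\xi_\ast}),E(u_{\xi_\ast})),\Omega\bigr)\geq\mbox{dist}\bigl((M(u_c),E(u_c)),\Omega\bigr)>0$ by \eqref{dset}, the explicit formula for $\L$ gives $\L(u_{\xi_\ast})<\L(u_c)=\tau_c$; by \eqref{tcri} and Remark \ref{Sld} this forces $u_{\xi_\ast}$ to scatter, contradicting $\|u_{\xi_\ast}\|_{L^{10}_{t,x}}=\infty$. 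Hence $P(u_c)=0$.

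Finally, for (iv), suppose for contradiction that $|x(t_n)|\geq\delta_0 t_n$ along some $t_n\to+\infty$ (the case $t_n\to-\infty$ is symmetric) and normalize so that $|x(0)|\leq C(\epsilon)$. Fix $\epsilon>0$, let $C(\epsilon)$ be as in (ii) so that $\int_{|x-x(t)|>C(\epsilon)}(|\nabla u_c|^{2}+|u_c|^{2})\,dx\leq\epsilon$ for all $t$, and for $T>0$ set $R:=\sup_{t\in[0,T]}|x(t)|+C(\epsilon)+1$. With $\phi$ a fixed smooth cut-off, $\phi\equiv1$ on $B_1$ and $\supp\phi\subset B_2$, let $\mathcal{M}_R(t):=\int_{\R^{3}}\phi(x/R)\,x\,|u_c(t,x)|^{2}\,dx$. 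Using the local conservation law $\partial_t|u_c|^{2}=-\Div\bigl(2\IM(\bar u_c\nabla u_c)\bigr)$ one computes
\[
\frac{d}{dt}\mathcal{M}_R(t)=\int_{\R^{3}}\phi(x/R)\,2\IM(\bar u_c\nabla u_c)\,dx+O\Bigl(\int_{|x|>R}\bigl(|\nabla u_c|^{2}+|u_c|^{2}\bigr)\,dx\Bigr),
\]
and since $R>\sup_{[0,T]}|x(t)|+C(\epsilon)$ the error term is $O(\epsilon)$ on $[0,T]$, while the first integral equals $P(u_c)+O(\epsilon)=O(\epsilon)$ by (iii); hence $|\mathcal{M}_R(t)-\mathcal{M}_R(0)|\lesssim\epsilon T$ for every $t\in[0,T]$. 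On the other hand, writing $x=(x-x(t))+x(t)$ and using (ii) gives $\mathcal{M}_R(t)=M(u_c)\,x(t)+O_\epsilon(1)+O(\epsilon R)$ uniformly on $[0,T]$. Comparing these two facts at $t$ and $0$, using $|x(0)|\leq C(\epsilon)$ and $R\leq\sup_{[0,T]}|x(t)|+C(\epsilon)+1$, and absorbing the $O(\epsilon R)$ term (whose coefficient is $O(\epsilon)$) into the left-hand side, yields $\sup_{t\in[0,T]}|x(t)|\lesssim_{M(u_c)}C_\epsilon+\epsilon T$ for every $T$, hence $\limsup_{t\to+\infty}|x(t)|/t\lesssim\epsilon$; since $\epsilon>0$ is arbitrary this contradicts $|x(t_n)|\geq\delta_0 t_n$ and proves (iv). The delicate points are keeping every error term expressed through the free parameter $\epsilon$---so the $O(\epsilon T)$ contribution is made negligible---and the precise use of the uniform localization of (ii) to ensure $\{|x|>R\}\subset\{|x-x(t)|>C(\epsilon)\}$ on the whole interval $[0,T]$.
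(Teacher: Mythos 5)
Your proposal is correct and follows essentially the same route as the paper: (i)--(ii) from precompactness modulo translations, (iii) by a Galilean boost that strictly lowers the energy while preserving the mass and the infinite $L^{10}_{t,x}$ norm, (iv) by the truncated center-of-mass argument of Killip--Oh--Pocovnicu--Visan (which the paper only cites, while you write it out), and (v) by extracting a strong $H^{1}$ limit along a minimizing time sequence and invoking Theorem \ref{SK}(i). A minor but worthwhile point in your favor: with the paper's convention $P(u)=\int 2\IM(\bar u\nabla u)\,dx$, your boost parameter $\xi_{\ast}=-P(u_{c})/(2M(u_{c}))$ and decrement $|P(u_{c})|^{2}/(8M(u_{c}))$ are the correct ones, whereas the choice $\xi_{0}=-P(u_{c})/M(u_{c})$ written in the paper would actually leave the energy unchanged.
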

\begin{proof}
The proof of Item (i) follows from exactly the same argument in \cite[Proposition 3.2]{DuyHolmerRoude2008}. 
By Item (i), Gagliardo-Nirenberg inequality and Sobolev embedding, property (ii) follows easily.
Now assume by contradiction  that $P(u_{c})\neq 0$. We consider the following global solution to \eqref{NLS},
\[w_{c}(t,x)=e^{ix\cdot\xi_{0}}e^{-it|\xi_{0}|^{2}}u_{c}(t, x-2\xi_{0}t),\]
where $\xi_{0}=-P(u_{c})/M(u_{c})$. Notice that $M(w_{c})=M(v_{c})$ and
\[E(w_{c})=E(u_{c})-\frac{1}{2}\frac{|P(u_{c})|^{2}}{M(u_{c})}<E(u_{c}).\]
This implies that $\L(w_{c})<\L(u_{c})=\tau_{c}$. Moreover, Theorem \ref{ECEC} implies
\begin{align*}
\|  w_{c} \|_{L^{10}_{t,x}([0,\infty)\times\R^{3})}&=\|  u_{c} \|_{L^{10}_{t,x}([0,\infty)\times\R^{3})}=\infty,\\
\|  w_{c} \|_{L^{10}_{t,x}((-\infty,0]\times\R^{3})}&=\|u_{c} \|_{L^{10}_{t,x}((-\infty,0]\times\R^{3})}=\infty,
\end{align*}
which is a contradiction with the definition of $u_{c}$. Next, applying \eqref{Unflo} and Item (iii), Item (iv) can be proved along the same lines as Proposition 10.2 in \cite{KillipOhPoVi2017}. We now prove \eqref{ubv}. We argue by contradiction. Suppose that \eqref{ubv} is false. Then there exists 
a sequence of times $\left\{t_{n}\right\}_{n\in \N}$ such that $\lim_{n\rightarrow\infty}I(u(t_{n}))=0$. Since $\left\{u(t_{n})\right\}_{n\in \N}$
is precompact modulo translation, there exists a sequence (still denoted by itself) and $v\in H^{1}(\R^{3})$ such that
$u(t_{n})\rightarrow v$ in $H^{1}(\R^{3})$ as $n\rightarrow \infty$. By strong convergence and continuity of $\L$ and $I$ we have
\[ I(v)=0 \quad \mbox{and} \quad \L(v)=\L(u(t))=\tau_{c}\in (0,\infty), \]
which is a contradiction with Lemma \ref{Lel}(ii). This completes the proof of proposition.
\end{proof}

Now we give the proof of Theorem \ref{TheS}.

\begin{proof}[{Proof of Theorem \ref{TheS}}]
We will show that the critical solution $u_{c}$ constructed in Theorem \ref{ECEC} can not exist. Indeed, consider the
localized Virial identity
\[V(t):=\int_{\R^{3}}\phi(x)|u(t,x)|^{2}dx.    \]
It follows from straightforward calculations that
\begin{align*}
	V^{\prime}(t)&=2\IM \int_{\R^{3}}\nabla\phi\cdot\nabla u \bar{u}dx             
\end{align*}
	and 
\[
\begin{split}
  V^{\prime\prime}(t)=4\int_{\R^{3}}[\RE\nabla \bar{u}\nabla^{2}\phi\nabla udx+\lambda_{1}\Delta\phi|u|^{4}+\frac{4}{3}\Delta\phi|u|^{6}  -\Delta^{2}\phi|u|^{2}]dx\\
	-2\int_{\R^{3}}\lambda_{2}\nabla\phi\nabla[ K\ast|u|^{2} ]|u|^{2}dx.
\end{split}
\]
If $\phi$ is a radial function we have
\[
\begin{split}
  V^{\prime\prime}(t)=4\int_{\R^{3}}\frac{\phi^{'}}{r}|\nabla u|^{2}dx+4\int_{\R^{3}}\(\frac{\phi^{''}}{r^{2}}-\frac{\phi^{'}}{r^{3}}\)|x\cdot\nabla u|^{2}dx\\
	+\int_{\R^{3}}\(\phi^{''}(r)+\frac{2}{r}\phi^{'}(r)\)(\lambda_{1}|u|^{4}+\frac{4}{3}|u|^{6})dx\\
	-\int_{\R^{3}}\Delta^{2}\phi|u|^{2}dx-2\lambda_{2}\int_{\R^{3}}\frac{\phi^{'}}{r}x\cdot\nabla[ K\ast|u|^{2} ]|u|^{2}dx.
\end{split}
\]
Now we choose the radial function $\phi(x)=R^{2}\psi(\frac{|x|}{R})$, where the function $\psi$ satisfies
\[\psi(r)=
\begin{cases} 
r^{2}, \quad 0\leq r\leq R;\\
0, \quad r\geq 2R,
\end{cases}  
\quad
0\leq\psi\leq r^{2}, \quad \psi^{''}\leq 2,\quad \psi^{4}\leq\frac{4}{R^{2}}.
\]
Then, using the estimate obtained in \cite[Lemma 6.2]{BellazziniForcella2019}, namely,
\[\begin{split}
-2\lambda_{2}\int_{\R^{3}}\frac{\phi^{'}}{r}x\cdot\nabla[ K\ast|u|^{2} ]|u|^{2}dx\geq6\lambda_{2}\int_{\R^{3}}[ K\ast|u|^{2} ]|u|^{2}dx\\
+
O\(\int_{|x|\geq R}[|\nabla u|^{2}+R^{-2}|u|^{2} +|u|^{4}](t,x)dx \),
\end{split}
\]
by the property of $\phi$ we get
\begin{equation}\label{VRCf}
\begin{split}
V^{\prime\prime}(t)\geq 8I(u(t))+O\(\int_{|x|\geq R}[|\nabla u|^{2}+|u|^{2} +|u|^{4}+|u|^{6}](t,x)dx \).
\end{split}
\end{equation}
We recall that from \eqref{ubv} there exists $\eta>0$ independent of $t$ such that $I(u(t))\geq \eta$.
Let $\epsilon>0$ be a parameter to be chosen sufficiently small below (depending on $\eta$). We deduce from Proposition \ref{BuIo} (iv) that there exists 
$t_{0}>0$ such that
\begin{equation}\label{Cpt}
|x(t)|\leq \epsilon t, \quad \text{for all $T\geq t_{0}$}.
\end{equation}
Given $T>t_{0}$, we put
\[R_{T}=C(\epsilon)+\epsilon T,\]
where $C(\epsilon)$ is as in \eqref{Unflo}. Then $\left\{|x|\geq R_{T}\right\}\subset \left\{|x-x(t)|\geq C(\epsilon)\right\}$ for all
$t\in [t_{0},T]$ and so, by \eqref{VRCf} and \eqref{Unflo}, we infer that for $\epsilon$ small enough (depending on $\eta$),
\begin{equation}\label{ufcv}
V^{\prime\prime}(t)\geq 2\eta, \quad \mbox{for all $t\in [t_{0},T]$}.
\end{equation}
Notice that from \eqref{Enl} we have
\begin{equation}\label{Limderi}
|V^{\prime}(t)|\leq CR\|u(t)   \|_{L^{2}_{x}}\| \nabla  u(t) \|_{L^{2}_{x}}\leq AR,
\end{equation}
for some constant $A$ independent of $t$ and $R>0$. Then integrating \eqref{ufcv} and applying \eqref{Limderi} we get
\[2\eta(T-t_{0})\leq \int^{T}_{t_{0}}V^{\prime\prime}(t)dt\leq |V^{\prime}(T)-V^{\prime}(t_{0})|
\leq2AR_{T}=2A\(C(\epsilon)+ \epsilon T\). \]
Choosing $\epsilon$ sufficiently small and taking $T$ large enough, we obtain a contradiction, 
the proof of Theorem \ref{TheS} is now completed.
\end{proof}

\section*{Acknowledgment}
The  author thanks R. Carles for his help in proving Proposition \ref{PSmc} and for useful comments
and suggestions that improved the paper.

\bibliographystyle{siam}
\bibliography{bibliografia}

\end{document}